\documentclass{SCAEOL}
\numberwithin{equation}{section}
\newcommand{\norm}[1]{\Vert #1 \Vert}
\newcommand{\eps}{\epsilon}

\begin{document}

\Year{2013} %
\Month{January}
\Vol{56} %
\No{1} %
\BeginPage{1} %
\EndPage{XX} %
\AuthorMark{First1 L N {\it et al.}}

\title{Two trust region type algorithms for solving nonconvex-strongly concave minimax problems\thanks{This work is supported by National Natural Science Foundation of China under the grants 12071279.}}{}%%ºó±ß»šÀšºÅÊÇÃŒÌâ

\author[1]{Tongliang Yao}{}
\author[1,2]{Zi Xu}{Corresponding author.}

\address[{\rm1}]{Department of Mathematics, Shanghai University, Shanghai {\rm 200444}, China}
\address[{\rm2}]{Newtouch Center for Mathematics of Shanghai University, Shanghai University, Shanghai {\rm 200444}, China}
\Emails{%909237570@shu.edu.cn,
	xuzi@shu.edu.cn}\maketitle

 {\begin{center}
		\parbox{14.5cm}{\begin{abstract}
				In this paper, we propose a Minimax Trust Region (MINIMAX-TR) algorithm and a Minimax Trust Region Algorithm with Contractions and Expansions(MINIMAX-TRACE) algorithm for solving nonconvex-strongly concave minimax problems. Both algorithms can find an $(\eps, \sqrt{\eps})$-second order stationary point(SSP) within $\mathcal{O}(\eps^{-1.5})$ iterations, which matches the best well known iteration complexity.\vspace{-3mm}
\end{abstract}}\end{center}}

%%% Abstract. ÕªÒª

%%% Keywords. ¹ØŒüŽÊ
\keywords{Nonconvex-strongly concave minimax problem, Seoncd-order algorithm, Cubic regularization, Trust region}

\MSC{90C47, 90C26, 90C30}

\renewcommand{\baselinestretch}{1.2}
%\begin{center} \renewcommand{\arraystretch}{1.5}
%	{\begin{tabular}{lp{0.8\textwidth}} \hline \scriptsize
%			{\bf Citation:}\!\!\!\!&\scriptsize First1 L N, First2 L N, First3 L N.  SCIENCE CHINA Mathematics  journal sample. Sci China Math, 2013, 56, doi: 10.1007/s11425-000-0000-0\vspace{1mm}
%			\\
%			\hline
%\end{tabular}}\end{center}

%%%%%%%%%%%%%%%%%%%%%%%%%%%%%%%%%%%%%%%%%%%%%%%%%%%%%%%%%%%%
%% Text of article.
%%%%%%%%%%%%%%%%%%%%%%%%%%%%%%%%%%%%%%%%%%%%%%%%%%%%%%%%%%%%
%    Section headings
\baselineskip 11pt\parindent=10.8pt  \wuhao

%%%%%%%%%%%%%%%%%%%%%%%%%%%%%%%%%%%%%%%%%%%%%%%%%%%%%%%
%%% The main text. ÕýÎÄ²¿·Ö%
%%  ÍŒ±íÒýÓÃ\cref¹«ÊœÒýÓÃ\eqref²Î¿ŒÎÄÏ×\cite
%%%%%%%%%%%%%%%%%%%%%%%%%%%%%%%%%%%%%%%%%%%%%%%%%%%%%%%
\section{Introduction}
We consider the following minimax optimization problems,
\begin{equation}\label{problem}
\min \limits_{x\in \mathbb{R}^n}\max \limits_{y\in \mathbb{R}^m}f(x,y),
\end{equation}
where $f(x,y)$ is $\ell$-smooth, $\mu$-strongly concave in $y$ but possibly nonconvex in $x$. Recently, minimax problem \eqref{problem} has attracted significant attentions due to its application in machine learning and data science, such as generative adversarial networks (GANs) \cite{goodfellow2014generative}, reinforcement learning \cite{dai2018sbeed}, robust learning \cite{qian2019robust} and adversarial training \cite{sinha2018certifiable}.

In the existing literature, many first-order algorithms have been developed to solve minimax problems under the nonconvex-strongly concave setting(\cite{huang2022accelerated,GDA,lu2020hybrid,jin2020local,rafique2022weakly}). All these algorithms can find an $\eps$-first order stationary point(FSP) of $f$ or $P(x) := \max_{y\in \mathcal{Y}}f(x,y)$ with a iteration complexity of $\tilde{\mathcal{O}}\left( \kappa_y^2\eps ^{-2} \right)$. 
The gradient complexity bound can be improved to $\tilde{\mathcal{O}}\left( \sqrt{\kappa_y}\eps^{-2} \right)$ by an 
accelerated algorithm \cite{lin2020near}, which match the gradient oracle lower bound for finding $\eps$-FSP \cite{li2021complexity}.
However, FSP could be saddle points and thus these algorithms cannot guarantee the local optimality. Inspired by the classic cubic regularization algorithm, Luo et al. \cite{luocubic} proposed Minimax Cubic Newton(MCN) algorithm. The algorithm uses gradient ascent to update y, which is then used to estimate the gradient and Hessian involved in the cubic regularization update for x. MCN could find a $(\eps, \sqrt{\eps})$-SSP with the iteration complexity of $\mathcal{O}(\eps^{-1.5})$. In a concurrent work, Chen et al. \cite{chencubic} proposed a similar algorithm to MCN but with a different termination criterion. The algorithm can achieve the same iteration complexity as MCN.

\subsection{Contributions}
In this paper, we consider nonconvex-strongly concave minimax problems. We focus on finding $(\eps, \sqrt{\eps})$-SSP of $P(x)$. Inspired by the stochastic trust region algorithm\cite{stochasticTR}, we propose a Minimax Trust Region(MINIMAX-TR) algorithm. We prove that MINIMAX-TR can find an $(\eps, \sqrt{\eps})$-SSP with $\mathcal{O}(\eps^{-1.5})$ number of iterations, matching the $\mathcal{O}(\eps^{-1.5})$ complexity in (\cite{luocubic},\cite{chencubic}). MINIMAX-TR can be viewed as inexact trust region method with fixed trust region radius. MINIMAX-TR may have poor numerical performance because the trust region radius is set to a small value. To overcome this shortcoming, we propose a Minimax Trust Region Algorithm with Contractions and Expansions(MINIMAX-TRACE) algorithm inspired by \cite{TRACE}. MINIMAX-TRACE also can find an $(\eps, \sqrt{\eps})$-SSP within $\mathcal{O}(\eps^{-1.5})$ number of iterations. Under mild assumptions, we also prove that MINIMAX-TRACE has local quadratic convergence. 
%These works focus on finding an $\eps$-first-order-stationary point(FSP).Lin et al. \cite{GDA} showed that the gradient descent-ascent (GDA) method could find $\eps$-FSP of $P(x) := \max \limits_{y\in \mathbb{R}^n}f(x,y)$ with the total complexity of $\mathcal{O}(\eps^{-2})$. 
%接下来介绍一阶算法，一阶算法只能到一阶稳定点，提两个cubic,引出TR。然后是related work,讲一下minimax一，二阶算法,cubic,TR。

\subsection{Related Works}
\textbf{Cubic regularization(CR).} Cubic regularization method was first proposed by Griewank(in an unpublished technical report \cite{griewankcubic}). In \cite{nesterov2006cubic}, Nesterov and Polyak proved that CR can converge to $(\eps, \sqrt{\eps})$-SSP with the iteration complexity of $\mathcal{O}(\eps^{-1.5})$. Cartis et al. \cite{cartis2011adaptive1,cartis2011adaptive2} proposed an adaptive cubic
regularization (ARC) approach to boost the efficiency by adaptively tunes the regularization parameter. In large machine learning problem, caluating the full gradient and hessian is a big overhead. To lower this computational cost, some stochastic or sub-sampling cubic regularization were proposed (\cite{kohler2017sub,xu2020newton,tripuraneni2018stochastic,yao2018inexact,chen2022accelerating}).

\noindent\textbf{Trust Region(TR).} Trust Region Method \cite{conn2000trust} is a classical second-order optimization algorithm.
It is known that traditional trust region methods converges at the rate $\mathcal{O}(\eps^{-2})$, Curtis et al. \cite{TRACE} proposes Trust Region Algorithm with Contractions and Expansions(TRACE) which converges at the optimal rate $\mathcal{O}(\eps^{-3/2})$. TRACE achieve the same iteration complexity as cubic regularization method. In \cite{stochasticTR}, authors proposed a novel trust region method with fixed trust region radius, it can also converge at the optimal rate $\mathcal{O}(\eps^{-3/2})$. Similar to cubic regularization method, some stochastic or sub-sampling trust region method were proposed(\cite{wang2022stochastic, chauhan2020stochastic, erway2020trust, stochasticTR, yao2018inexact, xu2020newton}) .

\noindent\textbf{Minimax problem.} We give a brief review on first-order algorithms for solving minimax optimization problems. Many algorithms have been proposed to solve convex-concave minimax optimization problems. Nemirovski \cite{nemirovski2004prox} proposed mirror-prox algorithm which converges at the rate $\mathcal{O}(\eps^{-1})$ in terms of duality gap. Nesterov \cite{nesterov2007dual} proposed dual extrapolation algorithm, which achieves a iteration complexity of $\mathcal{O}(\eps^{-1})$. Lin et al. \cite{lin2020near} proposed near-optimal algorithm with the same $\mathcal{O}(\eps^{-1})$ rate. These methods match the lower bound of \cite{ouyang2021lower}. For nonconvex-strongly concave minimax problem, Lin et al. \cite{lin2020near} proposed an accelerated algorithms called MINIMAX-PPA with a complexity of $\tilde{\mathcal{O}}(\sqrt{\kappa_y} \eps^{-2})$, which match the lower bound of   \cite{li2021complexity}. 

For general nonconvex-concave minimax problem, several multi-loop algorithms have been proposed in (\cite{lu2020hybrid},\cite{nouiehed2019solving},\cite{ostrovskii2021efficient},\cite{thekumparampil2019efficient},\cite{lin2020near},\cite{kong2021accelerated}). 
MINIMAX-PPA proposed by Lin et al. \cite{lin2020near} has the best known iteration complexity of $\mathcal{O}(\eps^{-2.5})$. Because multi-loop algorithms are relatively complicated to be implemented, some studies focus on single-loop algorithms for nonconvex-concave minimax problems. 
GDA-type algorithm (\cite{pan2021efficient}, \cite{lu2020hybrid}, \cite{GDA}, \cite{boct2023alternating}, \cite{AGP})  is a class of simple and popular algorithm, which alternates between a gradient descent update on x and a gradient ascent update on y in each iteration. Xu et al. \cite{AGP} proposed
a unified single-loop alternating gradient projection (AGP) algorithm for solving
nonconvex-concave and convex-nonconcave minimax problems, which can find an
$\eps$-stationary point with the iteration complexity of $\mathcal{O}(\eps^{-4})$. 

However, all previous works targeted finding $\eps$-FSP of $f$ or $P(x)$, which could be saddle points. To overcome this shortcoming, (\cite{luocubic}, \cite{chencubic}) proposed cubic regularized GDA, a second-order algorithm that converges to $\eps$-SSP. Cubic regularized GDA run cubic regularization update on x and maximize the objective on y alternatively.

For the black-box minimax problems where we can only access function values, some zeroth-order algorithms(\cite{xu2021zeroth}, \cite{shen2022zeroth}, \cite{wang2022zeroth}, \cite{huang2022accelerated}, \cite{sadiev2021zeroth}) for solving minimax optimization problem were proposed.

\section{Preliminaries}
In this section, we introduce some notations, definitions and propositions. 

%	\textbf{Notation} 
We use $\norm{\cdot}$ to denote the spectral norm of matrices and Euclidean norm of vectors. We use $\mathbb{N}$ to denote the set
of nonnegative integers.
For a twice differentiable function $f(x,y)$, we use $\nabla_x f, \nabla_y f$ and $\nabla_{xx}^2 f, \nabla_{xy}^2 f, \nabla_{yy}^2 f$ to denote its partial gradient and partial Hessian.
Given a discrete set $\mathcal{S}$, we denote its cardinality by $|\mathcal{S}|$. Denote $P(x) = \max_{y\in \mathbb{R}^m}f(x,y)$ and $y^*(x)=\mathop{\arg\max}_{y\in\mathbb{R}^m} f(x, y)$. We use $diag(v)$ for a vector $v$ to denote the square matrix which has $v$ on the diagonal and zeros everywhere else.

We make the following assumptions about $f(x,y)$ throughout the paper.
\begin{assumption}\label{basic assumption}
	For the minimax problem \eqref{problem}, $f(x,y)$ satifies the following assumptions: 
	\begin{enumerate}
		\item[(1)] $f(x, y)$ is twice differentiable, $\mu$-strongly concave with respect to $y$ and non-convex with respect to $x$.
		\item[(2)] Denote $z = (x, y)$. $f(z)$ is $\ell$-smooth, i.e., for any $z, z'$, it holds:
		\[
		\|\nabla f(z) - \nabla f(z') \|\le \ell \|z - z'\|.
		\]
		\item[(3)] The Jacobian matrices $\nabla_{xx}^2 f(x, y)$, $\nabla_{xy}^2 f(x, y)$, $\nabla_{yx}^2 f(x, y)$ and $ \nabla_{yy}^2 f(x, y)$ are $\rho$-Lipschitz continuous.
		\item[(4)] $P(x)$ is bounded below by a constant $P^*$ and has compact sub-level sets.
	\end{enumerate}
\end{assumption}

\begin{definition}
	Under Assumption \ref{basic assumption}, we define the condition number of $f(x,y)$ as $\kappa := \ell/\mu$.
\end{definition}

\begin{lemma}\quad[\cite{chencubic}, Proposition 1]\label{gradient of P}
	Suppose $f(x,y)$ satisfies Assumption \ref{basic assumption}, then $P(x)$ has
	$L_P:= (\kappa+1)\ell$-Lipschitz continuous gradients. Moreover, $y^*(x)$ is well-defined and $\kappa$-Lipschitz, and $\nabla P(x) = \nabla_x f(x, y^*(x))$.
\end{lemma}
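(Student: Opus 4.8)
The plan is to establish the three assertions in the natural order: well-posedness and Lipschitz continuity of $y^*$, then the Danskin-type formula $\nabla P(x)=\nabla_x f(x,y^*(x))$, and finally the Lipschitz estimate for $\nabla P$. For the first point, fix $x$; by Assumption \ref{basic assumption}(1) the map $y\mapsto f(x,y)$ is continuous and $\mu$-strongly concave, hence strictly concave and coercive, so it attains a unique maximizer. This shows $y^*(x)$ is well-defined and yields the stationarity identity $\nabla_y f(x,y^*(x))=0$ for every $x$.

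Next I would show $y^*$ is $\kappa$-Lipschitz. Fix $x_1,x_2$ and use the consequence of $\mu$-strong concavity of $f(x_1,\cdot)$ applied at $y^*(x_1)$ and $y^*(x_2)$,
\[
\mu\,\|y^*(x_1)-y^*(x_2)\|^2\le \big\langle \nabla_y f(x_1,y^*(x_1))-\nabla_y f(x_1,y^*(x_2)),\, y^*(x_1)-y^*(x_2)\big\rangle .
\]
Since $\nabla_y f(x_1,y^*(x_1))=0=\nabla_y f(x_2,y^*(x_2))$, the right-hand side equals $\big\langle \nabla_y f(x_2,y^*(x_2))-\nabla_y f(x_1,y^*(x_2)),\, y^*(x_1)-y^*(x_2)\big\rangle$, which by Cauchy--Schwarz and the $\ell$-smoothness of $f$ (at the two points differing only in the $x$-block) is at most $\ell\,\|x_1-x_2\|\,\|y^*(x_1)-y^*(x_2)\|$. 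Dividing through (the bound being trivial when $y^*(x_1)=y^*(x_2)$) gives $\|y^*(x_1)-y^*(x_2)\|\le\kappa\|x_1-x_2\|$.

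For the envelope identity I would avoid a naive chain-rule computation and instead sandwich the increment $P(x')-P(x)$. On one side, $P(x')\ge f(x',y^*(x))$ together with the quadratic bound implied by $\ell$-smoothness applied to $f(\cdot,y^*(x))$ gives $P(x')-P(x)\ge \langle \nabla_x f(x,y^*(x)),x'-x\rangle-\frac{\ell}{2}\|x'-x\|^2$. On the other side, $P(x)\ge f(x,y^*(x'))$, the same quadratic bound applied to $f(\cdot,y^*(x'))$, and then $\|\nabla_x f(x,y^*(x'))-\nabla_x f(x,y^*(x))\|\le\ell\|y^*(x')-y^*(x)\|\le\ell\kappa\|x'-x\|$ from the previous step, give $P(x')-P(x)\le \langle \nabla_x f(x,y^*(x)),x'-x\rangle+O(\|x'-x\|^2)$. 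The two one-sided estimates show $P$ is differentiable at $x$ with the claimed gradient. (Alternatively, since $\nabla^2_{yy}f$ is negative definite and invertible, the implicit function theorem yields differentiability of $y^*$, after which the formula follows from the chain rule and $\nabla_y f(x,y^*(x))=0$.)

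Finally, for the Lipschitz estimate, write for $x_1,x_2$ that $\nabla P(x_1)-\nabla P(x_2)=\nabla_x f(x_1,y^*(x_1))-\nabla_x f(x_2,y^*(x_2))$, insert $\pm\,\nabla_x f(x_2,y^*(x_1))$, and bound the two resulting differences by $\ell\|x_1-x_2\|$ and $\ell\|y^*(x_1)-y^*(x_2)\|\le\ell\kappa\|x_1-x_2\|$ using $\ell$-smoothness of $f$ and the Lipschitz property of $y^*$ from the second step; adding the two bounds gives $\|\nabla P(x_1)-\nabla P(x_2)\|\le(\kappa+1)\ell\|x_1-x_2\|$, i.e.\ $L_P=(\kappa+1)\ell$. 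The only genuinely delicate point is the envelope argument, where one must exploit the maximizing property of $y^*$ in both directions and the (Lipschitz) continuity of $y^*$ to absorb the cross term into $O(\|x'-x\|^2)$; the remaining steps are routine manipulations with strong concavity, Cauchy--Schwarz, and the triangle inequality.
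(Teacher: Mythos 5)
The paper offers no proof of this lemma: it is imported verbatim from \cite{chencubic} (Proposition 1), so there is no internal argument to compare yours against. Your proof is the standard, self-contained one and is essentially complete: uniqueness of the maximizer from strong concavity, $\kappa$-Lipschitzness of $y^*$ from strong monotonicity plus $\ell$-smoothness in the $x$-block, a two-sided envelope estimate giving differentiability of $P$ with $\nabla P(x)=\nabla_x f(x,y^*(x))$, and a triangle-inequality splitting through $\nabla_x f(x_2,y^*(x_1))$ yielding $L_P=(\kappa+1)\ell$. One sign needs correcting: for a $\mu$-strongly \emph{concave} function the gradient map is strongly \emph{anti}-monotone, so the inequality should read
\begin{equation*}
\mu\,\norm{y^*(x_1)-y^*(x_2)}^2\le \left\langle \nabla_y f(x_1,y^*(x_2))-\nabla_y f(x_1,y^*(x_1)),\, y^*(x_1)-y^*(x_2)\right\rangle;
\end{equation*}
as you displayed it, the right-hand side is nonpositive and the inequality is false. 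This is a harmless slip: after substituting the stationarity conditions $\nabla_y f(x_i,y^*(x_i))=0$ and applying Cauchy--Schwarz only the magnitude of the inner product matters, so the bound $\norm{y^*(x_1)-y^*(x_2)}\le\kappa\norm{x_1-x_2}$ and everything downstream go through unchanged.
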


\begin{definition}\label{approximate FSP}
	Suppose $f(x,y)$ satisfies Assumption \ref{basic assumption},
	then we call $x$ an $\eps$-first order stationary point(FSP) of $P(x)$ if $\norm{\nabla{P(x)}}\leq\eps$.
\end{definition}

\begin{definition}\label{approximate SSP}
	Suppose $f(x,y)$ satisfies Assumption \ref{basic assumption}, then we call $x$ is an$(\eps, \sqrt{\eps})$-second order stationary point(SSP) of $P(x)$ if $\norm{\nabla P(x)}\leq\eps$ and $\nabla^2 P(x)\succeq -\sqrt{\eps} \mathbf I$.
\end{definition}

\begin{lemma}\quad[\cite{chencubic}, Proposition 2]\label{hessian of P}
	Suppose $f(x,y)$ satisfies Assumption \ref{basic assumption}. Denote $H(x,y) = [\nabla_{xx}^2 f - \nabla_{xy}^2 f (\nabla_{yy}^2 f)^{-1}\nabla_{yx}^2 f](x,y)$, then $H(x,y)$ is a Lipschitz continuous mapping with Lipschitz constant $L_H = \rho (1 + \kappa)^2$. Additionally, the Hessian of $P(x)$ satisfies $\nabla^2 P(x)=H(x,y^*(x))$, and it is Lipschitz continuous with constant $H_{Lip} = \rho (1+\kappa)^3$.
\end{lemma}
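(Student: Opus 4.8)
The plan is to prove Lemma \ref{hessian of P} in two parts: first the Lipschitz continuity of the mapping $H(x,y)$ jointly in $(x,y)$, and then the identity $\nabla^2 P(x) = H(x,y^*(x))$ together with the Lipschitz continuity of $x\mapsto \nabla^2 P(x)$, which follows by composing the first part with the $\kappa$-Lipschitz map $y^*(x)$ from Lemma \ref{gradient of P}.

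For the first part, I would write $H(x,y) = A - B C^{-1} B^{\top}$ with $A = \nabla^2_{xx}f$, $B = \nabla^2_{xy}f$, $C = \nabla^2_{yy}f$, and estimate the difference $H(z) - H(z')$ for $z=(x,y)$, $z'=(x',y')$ by adding and subtracting terms. The building blocks are: each of $A, B, C$ is $\rho$-Lipschitz by Assumption \ref{basic assumption}(3); each is bounded in norm by $\ell$ by $\ell$-smoothness; and $C$ satisfies $C \preceq -\mu \mathbf{I}$ by $\mu$-strong concavity, so $\|C^{-1}\| \le 1/\mu$. From the resolvent identity $C^{-1} - (C')^{-1} = C^{-1}(C' - C)(C')^{-1}$ one gets that $C^{-1}$ is $(\rho/\mu^2)$-Lipschitz. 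Then a telescoping estimate on $BC^{-1}B^{\top}$ — bounding $\|B\|\le\ell$, $\|C^{-1}\|\le 1/\mu$ and using the Lipschitz bounds on each factor — yields a Lipschitz constant of the form $\rho + 2\ell\cdot\rho\cdot\frac1\mu + \ell^2\cdot\frac{\rho}{\mu^2} = \rho(1 + \kappa)^2$ after factoring, which matches $L_H$. This is the main computational obstacle, though it is routine: the bookkeeping of which norm bound applies to which factor, and collecting terms so the constant comes out exactly $\rho(1+\kappa)^2$ rather than something merely of that order.

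For the identity $\nabla^2 P(x) = H(x, y^*(x))$, I would start from $\nabla P(x) = \nabla_x f(x, y^*(x))$ (Lemma \ref{gradient of P}) and differentiate again, using that $y^*(x)$ is characterized by the first-order condition $\nabla_y f(x, y^*(x)) = 0$. Implicit differentiation of this condition gives $\nabla^2_{yx}f + \nabla^2_{yy}f \cdot \nabla y^*(x) = 0$, hence $\nabla y^*(x) = -(\nabla^2_{yy}f)^{-1}\nabla^2_{yx}f$ (the inverse exists since $\nabla^2_{yy}f \preceq -\mu\mathbf{I}$ is invertible); here one should note $y^*$ is differentiable because of the implicit function theorem applied to the smooth, nondegenerate system. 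Then the chain rule gives $\nabla^2 P(x) = \nabla^2_{xx}f + \nabla^2_{xy}f\cdot\nabla y^*(x) = \nabla^2_{xx}f - \nabla^2_{xy}f(\nabla^2_{yy}f)^{-1}\nabla^2_{yx}f = H(x, y^*(x))$, all evaluated at $(x, y^*(x))$.

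Finally, for the Lipschitz continuity of $\nabla^2 P$: for any $x, x'$,
\[
\|\nabla^2 P(x) - \nabla^2 P(x')\| = \|H(x, y^*(x)) - H(x', y^*(x'))\| \le L_H\big(\|x - x'\| + \|y^*(x) - y^*(x')\|\big) \le L_H(1 + \kappa)\|x - x'\|,
\]
using the $L_H$-Lipschitz bound from the first part (with the $\ell_1$-type combination of the two coordinate differences, consistent with how $L_H$ was derived) and the $\kappa$-Lipschitz property of $y^*$. Since $L_H(1+\kappa) = \rho(1+\kappa)^2(1+\kappa) = \rho(1+\kappa)^3 = H_{Lip}$, this completes the proof. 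I expect no conceptual difficulty here; the only care needed is to keep the norm conventions (sum of coordinate norms versus Euclidean norm of the stacked vector) consistent between the derivation of $L_H$ and its application, so the constants chain together cleanly.
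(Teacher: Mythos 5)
Your proposal is correct: the decomposition $H = A - BC^{-1}B^{\top}$ with the resolvent identity for $C^{-1}$, the implicit-differentiation derivation of $\nabla^2 P(x) = H(x,y^*(x))$, and the final chaining through the $\kappa$-Lipschitz map $y^*$ all check out, and the constants $\rho + 2\ell\rho/\mu + \ell^2\rho/\mu^2 = \rho(1+\kappa)^2$ and $L_H(1+\kappa) = H_{Lip}$ come out exactly as stated. The paper itself gives no proof of this lemma — it is quoted as Proposition 2 of the cited reference — and your argument is essentially the standard one that reference uses, so there is nothing to reconcile.
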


\begin{lemma}\quad[\cite{nesterov2006cubic}, Lemma 1]\label{lem:2.3}
	If Assumption \ref{basic assumption} and Lemma \ref{hessian of P} hold, we have
	\begin{equation}\label{gradient of P first order expansion error}
	\norm{\nabla P(y) - \nabla P(x) - \nabla^2 P(x)(y-x)} \leq 
	\frac{H_{Lip}}{2} \norm{x-y}^2	
	\end{equation}
	and
	\begin{equation}\label{P second order expansion error}
	\left|
	P(y)-P(x)-\nabla P(x)^\top (y-x) - \frac{1}{2} (y-x)^\top \nabla^2 P(x) (y-x)
	\right| \leq \frac{H_{Lip}}{6} \norm{x-y}^3.
	\end{equation}
\end{lemma}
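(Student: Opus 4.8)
Both inequalities are the classical ``Lipschitz-Hessian'' Taylor estimates, and the plan is to obtain them by restricting $P$ to the segment joining $x$ and $y$ and applying the integral form of Taylor's theorem. The only structural input beyond elementary calculus is that $\nabla^2 P$ is $H_{Lip}$-Lipschitz, which is precisely Lemma \ref{hessian of P}; moreover $P$ is $C^2$ by Lemmas \ref{gradient of P} and \ref{hessian of P}, which legitimizes the manipulations below.

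For \eqref{gradient of P first order expansion error}, I would start from the fundamental theorem of calculus applied to $t \mapsto \nabla P(x + t(y-x))$,
\[
\nabla P(y) - \nabla P(x) = \int_0^1 \nabla^2 P\big(x + t(y-x)\big)(y-x)\, dt ,
\]
and subtract $\nabla^2 P(x)(y-x) = \int_0^1 \nabla^2 P(x)(y-x)\, dt$ to get
\[
\nabla P(y) - \nabla P(x) - \nabla^2 P(x)(y-x) = \int_0^1 \big[\nabla^2 P(x + t(y-x)) - \nabla^2 P(x)\big](y-x)\, dt .
\]
Taking norms inside the integral, using $\norm{\nabla^2 P(x + t(y-x)) - \nabla^2 P(x)} \le H_{Lip}\, t\, \norm{y - x}$ from Lemma \ref{hessian of P}, and evaluating $\int_0^1 t\, dt = \frac{1}{2}$ yields the first bound.

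For \eqref{P second order expansion error}, set $\phi(t) := P\big(x + t(y-x)\big)$, so that $\phi'(t) = \nabla P(x+t(y-x))^\top(y-x)$ and $\phi''(t) = (y-x)^\top \nabla^2 P(x+t(y-x))(y-x)$; the left-hand side of \eqref{P second order expansion error} equals $\big|\,\phi(1) - \phi(0) - \phi'(0) - \frac{1}{2}\phi''(0)\,\big|$. Substituting the integral remainder $\phi(1) = \phi(0) + \phi'(0) + \int_0^1 (1-t)\phi''(t)\, dt$ and using $\int_0^1 (1-t)\, dt = \frac{1}{2}$, this reduces to $\big|\int_0^1 (1-t)\big(\phi''(t) - \phi''(0)\big)\, dt\big|$. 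Bounding $|\phi''(t) - \phi''(0)| \le \norm{\nabla^2 P(x+t(y-x)) - \nabla^2 P(x)}\,\norm{y-x}^2 \le H_{Lip}\, t\, \norm{y-x}^3$ and evaluating $\int_0^1 t(1-t)\, dt = \frac{1}{6}$ gives the second bound.

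I do not expect a real obstacle here: each part is only a few lines once one passes to the one-dimensional parametrization, and the sole point needing a word of care is that the fundamental theorem of calculus and the Taylor remainder formula are applicable --- which holds because $\nabla^2 P$ is continuous (being Lipschitz by Lemma \ref{hessian of P}), so $\phi \in C^2([0,1])$. The rest is just computing $\int_0^1 t\, dt$ and $\int_0^1 t(1-t)\, dt$.
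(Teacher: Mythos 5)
Your proof is correct and is exactly the classical integral-remainder argument of Nesterov--Polyak; the paper itself gives no proof and simply cites [\cite{nesterov2006cubic}, Lemma 1], whose proof is the same computation you carried out. The one point worth keeping explicit, which you did, is that the Lipschitz constant entering both bounds is $H_{Lip}$ from Lemma \ref{hessian of P} applied to $\nabla^2 P$, not the constant $L_H$ for the map $H(x,y)$.
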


\begin{lemma}\quad[\cite{conn2000trust}, Corollary 7.2.2]\label{TR sub}
	For a twice differentiable function $F(x)$, the vector $s^*$ is an optimal solution of the trust region subproblem
	\begin{equation}
	\min\limits_{s \in \mathbb{R}^n} F(x) + \nabla F(x)^T s + \frac{1}{2} s^T \nabla^2 F(x) s, \quad \text{s.t. } \norm{s} \leq r
	\end{equation}
	if and only if $s^*$ is feasible and there is a scalar $\lambda \geq 0$ such that the following coniditions are satisfied:
	\begin{subequations}
		\begin{align}
		\nabla F(x) + (\nabla^2 F(x) + \lambda \mathbf{I}) s^* = 0, \label{first opt condition} \\ 
		(\nabla^2 F(x) + \lambda \mathbf{I}) \succeq 0, \label{second opt condition}\\
		\lambda (r - \norm{s^*}) = 0. \label{complementary condition}
		\end{align}
	\end{subequations}
\end{lemma}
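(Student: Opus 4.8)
The plan is to reduce both implications to a single exact identity for the quadratic model. Write $g := \nabla F(x)$, $H := \nabla^2 F(x)$ and $q(s) := F(x) + g^\top s + \tfrac12 s^\top H s$, so the subproblem is $\min\{q(s) : \norm{s}\le r\}$. First I would record the identity: for every $s$ and every scalar $\lambda$,
\begin{equation}\label{eq:key-identity-tr}
\begin{aligned}
q(s) - q(s^*) ={}& \bigl(g + (H+\lambda\mathbf{I})s^*\bigr)^\top(s-s^*) + \tfrac12 (s-s^*)^\top(H+\lambda\mathbf{I})(s-s^*) \\
&{}- \tfrac{\lambda}{2}\bigl(\norm{s}^2 - \norm{s^*}^2\bigr),
\end{aligned}
\end{equation}
obtained by expanding the quadratic $q$ exactly about $s^*$ and substituting $\norm{s}^2 = \norm{s^*}^2 + 2 s^{*\top}(s-s^*) + \norm{s-s^*}^2$. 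This is a one-line computation, but it is the engine of everything that follows.

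For sufficiency I would assume $s^*$ is feasible and $\lambda\ge 0$ satisfies \eqref{first opt condition}--\eqref{complementary condition}, and read off from \eqref{eq:key-identity-tr} that $q(s)-q(s^*)\ge -\tfrac{\lambda}{2}(\norm{s}^2-\norm{s^*}^2)$ for every feasible $s$: the first term vanishes by \eqref{first opt condition} and the quadratic term is nonnegative by \eqref{second opt condition}. A short case split on whether $\lambda=0$ or $\lambda>0$ — in the latter case \eqref{complementary condition} forces $\norm{s^*}=r$ while feasibility gives $\norm{s}^2\le r^2$ — then makes the right-hand side nonnegative, so $s^*$ is a global minimizer.

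For necessity, let $s^*$ be optimal. If $\norm{s^*}<r$ the conclusion is immediate: $s^*$ is then an unconstrained local minimizer of the quadratic $q$, hence $g+Hs^*=0$ and $H\succeq 0$, and $\lambda=0$ verifies all three conditions. The substantive case is $\norm{s^*}=r$ (so $r>0$ and $s^*\ne 0$). Here the single active constraint $\norm{s}^2-r^2\le 0$ has nonzero gradient $2s^*$ at $s^*$, so LICQ holds and the KKT conditions produce a multiplier $\lambda\ge 0$ with $g+(H+\lambda\mathbf{I})s^*=0$, i.e. \eqref{first opt condition}, while \eqref{complementary condition} holds automatically; if one prefers to avoid invoking KKT in the abstract, the sign $\lambda\ge 0$ can be extracted directly by feeding the feasible points $s=ts^*$, $t\uparrow 1$, into \eqref{eq:key-identity-tr}. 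It remains to establish the global curvature condition \eqref{second opt condition}, and this is where the identity pays off: substituting \eqref{first opt condition} into \eqref{eq:key-identity-tr} and restricting to $s$ on the sphere $\norm{s}=\norm{s^*}=r$ collapses it to $q(s)-q(s^*)=\tfrac12(s-s^*)^\top(H+\lambda\mathbf{I})(s-s^*)\ge 0$. I would then argue that chords $s-s^*$ of this sphere realize every direction: for a direction $d$ with $s^{*\top}d\ne 0$ take $s=s^*-\tfrac{2s^{*\top}d}{\norm{d}^2}\,d$ (which lies on the sphere), and cover the directions $d\perp s^*$ by applying the inequality to $d+\eta s^*$ and letting $\eta\to 0$, concluding $H+\lambda\mathbf{I}\succeq 0$.

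The main obstacle is precisely this last point in the boundary case: upgrading the local, KKT-type second-order information to the \emph{global} semidefiniteness statement \eqref{second opt condition}, which is exactly what makes the trust region subproblem tractable and what distinguishes this characterization from generic nonconvex optimality conditions. Identity \eqref{eq:key-identity-tr} is the device that makes it work by turning $q(s)-q(s^*)$ on the sphere into an honest quadratic form; the rest is the elementary geometry of chords together with one limiting step.
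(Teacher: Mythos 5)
The paper does not prove this lemma at all: it is quoted verbatim as Corollary 7.2.2 of Conn--Gould--Toint, so there is no in-paper argument to compare against. Your proof is correct and is, in substance, the standard Gay/Mor\'e--Sorensen argument that the cited corollary itself rests on: the exact second-order expansion of the quadratic model about $s^*$ with the $\lambda$-terms regrouped is precisely the identity used there, sufficiency follows by the same case split on $\lambda=0$ versus $\lambda>0$ with complementarity, and the delicate half of necessity --- upgrading boundary optimality to the \emph{global} statement \eqref{second opt condition} --- is handled exactly as in the classical proof, by restricting the identity to the sphere $\norm{s}=r$ and realizing every direction as a chord, with the limiting perturbation $d+\eta s^*$ covering directions orthogonal to $s^*$. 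The chord formula $s=s^*-\tfrac{2s^{*\top}d}{\norm{d}^2}d$ does land on the sphere, and the sign extraction for $\lambda$ via $s=ts^*$, $t\uparrow 1$ is a legitimate substitute for invoking KKT sign conditions. The only caveat, shared with every statement of this result, is the implicit assumption $r>0$ (for $r=0$ the feasible set is $\{0\}$ and the characterization fails when $\nabla F(x)\neq 0$); you gesture at this by noting $s^*\neq 0$ in the boundary case, and it is harmless here since the algorithms in the paper always use $r>0$. Nothing further is needed.
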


\section{A Minimax Trust Region Algorithm}
In this section, we propose a Minimax Trust Region (MINIMAX-TR) algorithm for solving \eqref{problem}.
At each iteration, for a given $x_t$, MINIMAX-TR algorithm runs enough number of gradient ascent steps on $y$ to obtain $y_t$ which is a good estimate of $y^*(x_t)$. Consequently, $g_t = \nabla_x f(x_t, y_t) $ and $H_t = H(x_t, y_t)$ are good approximations of $\nabla P(x_t)=\nabla_x f(x_t, y^*(x_t))$ and $\nabla^2 P(x_t)= H(x_t, y^*(x_t))$ respectively. Then we use the trust region method to update $x_t$. MINIMAX-TR can be regarded as an inexact trust region method for solving $\min_{x\in\mathbb{R}^n} P(x)$. 
%	By running gradient ascent on $y$ with enough number of iterations, we can obatin $y_t \approx y^*(x_t)$. Consequently, $g_t = \nabla_x f(x_t, y_t) \approx \nabla_x f(x_t, y^*(x_t))$ and $H_t = H(x_t, y_t) \approx H(x_t, y^*(x_t))$ are good approximations of $\nabla P(x_t)$ and $\nabla^2 P(x_t)$.
The proposed MINIMAX-TR is formally stated in Algorithm \ref{MINIMAX-TR}.

\begin{algorithm}[h!]
	\footnotesize
	\caption{\quad Minimax Trust Region (MINIMAX-TR)}
	\label{MINIMAX-TR}	
	
	\begin{algorithmic}[1]
			\REQUIRE $x_0, y_{-1}=\mathbf 0, \eps, r, \eta_y, T, \{N_t\}_{t=0}^T$; 
			\FOR{$t=0, \cdots T-1$}
				\STATE $\widetilde{y}_0 = y_{t-1}$;
				\FOR{$k=0, \cdots N_t-1$}
					\STATE $\widetilde{y}_{k+1} = \widetilde{y}_k + \eta_y \nabla_y f(x_t, \widetilde{y}_k)$;
				\ENDFOR
				\STATE $y_t = \widetilde{y}_{N_t}$;
				\STATE $g_t = \nabla_x f(x_t,y_t)$;
				\STATE $H_t = H(x_t,y_t)$;
				\STATE $s_t=\mathop{\arg\min}\limits_{\norm{s}\leq r} g_t ^\top s + \frac{1}{2} s^\top H_t s$;
				\STATE $x_{t+1} = x_t + s_t$;
				\IF{$\lambda_t \leq 2\sqrt{\eps/ H_{Lip}}$}
				\STATE Output $x_{t+1}$;
				\ENDIF
			\ENDFOR
	\end{algorithmic}
\end{algorithm}

The following lemma shows that we can obtain sufficient accurate gradient and Hessian estimators by running gradient ascent with enough number of iterations. 
\begin{lemma}\quad\label{gradient ascent iterations}
	For any given $\eps_1 > 0$ and $\eps_2 > 0$, if Assumption \ref{basic assumption} holds, and we choose
	\begin{equation}\label{N_t}
	\left\{
	\begin{split}
	N_0 &\geq \kappa \ln \frac{\norm{y_{-1}-y^*(x_0)}}{A}, \\
	N_t &\geq \kappa \ln \frac{A+\kappa\norm{s_{t-1}} }{A}.  \hspace{10mm} t \geq 1
	\end{split}
	\right.
	\end{equation}
	where $A = \min\{\eps_1/l, \eps_2/L_H\}$. Then $\norm{\nabla P(x_t)-g_t} \leq \eps_1 $ and
	$\norm{\nabla^2 P(x_t)-H_t} \leq \eps_2$.
\end{lemma}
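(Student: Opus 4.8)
The plan is to reduce both estimation errors to the single quantity $\norm{y_t-y^*(x_t)}$, and then to control that inner-loop error by induction on $t$. For the reduction, I would use Lemma \ref{gradient of P} and Lemma \ref{hessian of P}, which give $\nabla P(x_t)=\nabla_x f(x_t,y^*(x_t))$ and $\nabla^2 P(x_t)=H(x_t,y^*(x_t))$. Since $f$ is $\ell$-smooth, $\nabla_x f(x_t,\cdot)$ is $\ell$-Lipschitz, and since $H(x,y)$ is $L_H$-Lipschitz by Lemma \ref{hessian of P}, this yields
\[
\norm{\nabla P(x_t)-g_t}\le \ell\,\norm{y_t-y^*(x_t)},\qquad \norm{\nabla^2 P(x_t)-H_t}\le L_H\,\norm{y_t-y^*(x_t)}.
\]
So it suffices to prove $\norm{y_t-y^*(x_t)}\le A=\min\{\eps_1/\ell,\eps_2/L_H\}$ for every $t$.

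Next I would quantify the contraction of the inner gradient-ascent loop at a fixed $x_t$. By Assumption \ref{basic assumption}, $-f(x_t,\cdot)$ is $\mu$-strongly convex and $\ell$-smooth, so its Hessian in $y$ lies between $\mu\mathbf I$ and $\ell\mathbf I$. Writing the gradient difference along the segment from $\widetilde y_k$ to $y^*(x_t)$ by means of the averaged Hessian $M_k$ (with $\mu\mathbf I\preceq M_k\preceq \ell\mathbf I$) and using $\nabla_y f(x_t,y^*(x_t))=0$, the update $\widetilde y_{k+1}=\widetilde y_k+\eta_y\nabla_y f(x_t,\widetilde y_k)$ with $\eta_y=1/\ell$ gives $\widetilde y_{k+1}-y^*(x_t)=(\mathbf I-M_k/\ell)(\widetilde y_k-y^*(x_t))$, hence $\norm{\widetilde y_{k+1}-y^*(x_t)}\le (1-1/\kappa)\norm{\widetilde y_k-y^*(x_t)}$. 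Iterating over the $N_t$ inner steps, using $\widetilde y_0=y_{t-1}$ and $1-1/\kappa\le e^{-1/\kappa}$, I get $\norm{y_t-y^*(x_t)}\le e^{-N_t/\kappa}\,\norm{y_{t-1}-y^*(x_t)}$.

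Finally I would close the induction by bounding the warm start $\norm{y_{t-1}-y^*(x_t)}$. For $t=0$ it equals $\norm{y_{-1}-y^*(x_0)}$, and the prescribed $N_0\ge\kappa\ln(\norm{y_{-1}-y^*(x_0)}/A)$ forces $e^{-N_0/\kappa}\norm{y_{-1}-y^*(x_0)}\le A$. For $t\ge1$, combining the triangle inequality, the $\kappa$-Lipschitz continuity of $y^*$ from Lemma \ref{gradient of P}, and the induction hypothesis $\norm{y_{t-1}-y^*(x_{t-1})}\le A$ gives
\[
\norm{y_{t-1}-y^*(x_t)}\le\norm{y_{t-1}-y^*(x_{t-1})}+\kappa\norm{x_t-x_{t-1}}\le A+\kappa\norm{s_{t-1}},
\]
so the prescribed $N_t\ge\kappa\ln\frac{A+\kappa\norm{s_{t-1}}}{A}$ yields $\norm{y_t-y^*(x_t)}\le e^{-N_t/\kappa}(A+\kappa\norm{s_{t-1}})\le A$. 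Together with the reduction this proves the claim.

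The computations are largely routine; the two points that need care are obtaining the clean per-step contraction factor $1-1/\kappa$ (which is why I would invoke twice-differentiability and the averaged-Hessian representation rather than a generic co-coercivity estimate, which would only give $\sqrt{1-1/\kappa}$), and organizing the induction so that the warm start at iteration $t$ is paid for by the accuracy already guaranteed at iteration $t-1$ plus the $\kappa$-Lipschitz drift of $y^*$ caused by the step $s_{t-1}$. I would also state explicitly the tacit choice $\eta_y=1/\ell$, which is what makes the listed lower bounds on $N_t$ exactly right.
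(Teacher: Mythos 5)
Your proposal is correct and follows essentially the same route as the paper: reduce both error bounds to $\norm{y_t-y^*(x_t)}\le A$ via the $\ell$-Lipschitzness of $\nabla_x f(x_t,\cdot)$ and the $L_H$-Lipschitzness of $H(x_t,\cdot)$, then establish that bound by induction on $t$ using the linear contraction of the inner loop and the $\kappa$-Lipschitz drift of $y^*$ to control the warm start. The only (immaterial) difference is how the per-step factor $1-\kappa^{-1}$ is obtained: you take $\eta_y=1/\ell$ with an averaged-Hessian argument, while the paper takes $\eta_y=2/(\ell+\mu)$ and cites Theorem 2.1.15 of Nesterov, both of which yield the contraction rate the stated bounds on $N_t$ require.
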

\begin{proof}
	We first use induction to show that
	\begin{equation}\label{error y}
	\norm{y^*(x_t)-y_t} \leq A
	\end{equation}
	holds for any $t \geq 0$. Note that $y_t$ is obtained by applying $N_t$ gradint descent steps starting from $y_{t-1}$. Set $\eta_y = \frac{2}{l+\mu}$. Then, by Assumption \ref{basic assumption} and Theorem 2.1.15 in \cite{nesterov2018lectures}, we conclude that
	\begin{equation}\label{GD rate}
	\norm{y_t-y^*(x_t)} \leq (1-\kappa^{-1})^{N_t} \norm{y_{t-1}-y^*(x_t))}.
	\end{equation}
	Firstly, by setting $t=0$ in \eqref{GD rate}, we get
	\begin{equation}
	\begin{split}
	\nonumber
	\norm{y_0-y^*(x_0)} &\leq \norm{y_{-1}-y^*(x_0)}\exp(N_0\ln(1-\kappa^{-1}))  \\
	&\leq \norm{y_{-1}-y^*(x_0)}\exp(-ln\frac{\norm{y_{-1}-y^*(x_0)}}{A})=A	
	\end{split}
	\end{equation}
	where the second inequality is by \eqref{N_t} and the fact that $\ln(1- \kappa^{-1})\leq -\kappa^{-1}$ when $\kappa^{-1}\in(0,1)$. Hence \eqref{error y} holds for $t=0$. Suppose \eqref{error y} holds for $t = t' - 1$. Then, by the Cauchy-Swartz ineuality and the $\kappa$-Lipschitz continuous of $y^*(x)$,  \eqref{GD rate} implies that
	\begin{equation}
	\begin{split}
	\nonumber
	\norm{y_{t'} - y^*(x_{t'})} &\leq (1-\kappa^{-1})^{N_{t'}} (\norm{y_{t'-1}-y^*(x_{t'-1})}+\norm{y^*(x_{t'-1})-y^*(x_{t'})}) \\
	&\leq \exp(N_{t'}\ln(1-\kappa^{-1}))(A+\kappa\norm{s_{t'-1}}) \\
	&\leq \exp(-\ln(\frac{A+\kappa\norm{s_{t'-1}}}{A})) (A+\kappa\norm{s_{t'-1}}) = A,
	\end{split}
	\end{equation}    
	where the last inequality is by $\ln(1-\kappa^{-1})\le -\kappa^{-1}$ and \eqref{N_t}. By induction, we complete the proof of \eqref{error y}.
	
	By Assumption \ref{basic assumption}, Lemma \ref{gradient of P}, Lemma \ref{hessian of P} and \eqref{error y}, it can be easily verified that
	\begin{align}
	\norm{\nabla P(x_t) - g_t} &= \norm{\nabla_x f(x_t,y^*(x_t)) - \nabla_x f(x_t,y_t)} \leq 
	l\norm{y_t-y^*(x_t)} \leq \eps_1,\nonumber\\
	\norm{\nabla^2 P(x_t) - H_t} &= \norm{H(x_t,y^*(x_t))-H(x_t,y_t)} \leq L_H \norm{y_t-y^*(x_t)} \leq \eps_2,\nonumber
	\end{align}
	which complete the proof.
\end{proof}  

Using Lemma \ref{gradient ascent iterations}, we show that MINIMAX-TR can find $\mathcal{O}(\eps,\sqrt{\eps})$-SSP in less than $T = \mathcal{O}(\eps^{-1.5})$ iterations.
\begin{theorem}\quad\label{Minimax Trust Region convergence rate}
	Suppose Assumption \ref{basic assumption} holds. By setting $\eps_1 = \eps/12, \eps_2 = \sqrt{\eps H_{Lip}}/6$, $r=\sqrt{\eps/H_{Lip}}$ and choosing $N_t$ in \eqref{N_t}, Algorithm \ref{MINIMAX-TR} finds an $\mathcal{O}(\eps,\sqrt{\eps})$-SSP in less than  $T =6\sqrt{H_{Lip}}(P(x_0)-P^*)/\epsilon^{1.5}$ iterations.
\end{theorem}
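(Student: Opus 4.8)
The plan is to view Algorithm~\ref{MINIMAX-TR} as an inexact trust-region method for the reduced problem $\min_{x}P(x)$ and to combine two facts: as long as the stopping test has not fired, each iteration decreases $P$ by at least $\eps^{1.5}/(6\sqrt{H_{Lip}})$; and when it fires, the returned point satisfies the requirements of Definition~\ref{approximate SSP} up to constant factors. Write $m_t(s):=g_t^\top s+\tfrac12 s^\top H_t s$ for the model solved at step $t$. With the prescribed $\eps_1=\eps/12$, $\eps_2=\sqrt{\eps H_{Lip}}/6$, $r=\sqrt{\eps/H_{Lip}}$ and the $N_t$ from \eqref{N_t}, Lemma~\ref{gradient ascent iterations} supplies the uniform estimates $\norm{\nabla P(x_t)-g_t}\le\eps_1$ and $\norm{\nabla^2 P(x_t)-H_t}\le\eps_2$ at every iteration, which is what makes the inexact analysis go through.

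For the decrease, I would begin from the third-order bound \eqref{P second order expansion error} at $x=x_t$, $y=x_{t+1}=x_t+s_t$, then replace $\nabla P(x_t)$ by $g_t$ and $\nabla^2 P(x_t)$ by $H_t$ at a cost of $\eps_1\norm{s_t}$ and $\tfrac12\eps_2\norm{s_t}^2$ respectively, giving $P(x_t)-P(x_{t+1})\ge -m_t(s_t)-\eps_1\norm{s_t}-\tfrac12\eps_2\norm{s_t}^2-\tfrac{H_{Lip}}{6}\norm{s_t}^3$. To lower bound $-m_t(s_t)$ I would use Lemma~\ref{TR sub}: \eqref{first opt condition} gives $g_t=-(H_t+\lambda_t\mathbf{I})s_t$, hence $m_t(s_t)=-\tfrac12 s_t^\top H_t s_t-\lambda_t\norm{s_t}^2$, and \eqref{second opt condition} gives $s_t^\top H_t s_t\ge-\lambda_t\norm{s_t}^2$, so $-m_t(s_t)\ge\tfrac12\lambda_t\norm{s_t}^2$. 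Before termination $\lambda_t$ exceeds the stopping threshold and in particular is positive, so \eqref{complementary condition} forces $\norm{s_t}=r$; substituting $r=\sqrt{\eps/H_{Lip}}$ (which renders each of the three error terms of order $\eps^{1.5}/\sqrt{H_{Lip}}$) and the lower bound on $\lambda_t$, the model decrease dominates and the per-iteration bound $P(x_t)-P(x_{t+1})\ge\eps^{1.5}/(6\sqrt{H_{Lip}})$ follows.

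Summing over $t=0,\dots,T-1$ and using $P\ge P^*$ gives $P(x_0)-P^*\ge T\,\eps^{1.5}/(6\sqrt{H_{Lip}})$, which is impossible once $T=6\sqrt{H_{Lip}}(P(x_0)-P^*)/\eps^{1.5}$, so the test must fire earlier. Suppose it does at iteration $t$, so $\lambda_t\le 2\sqrt{\eps/H_{Lip}}$ and $x_{t+1}$ is returned. For the gradient, \eqref{gradient of P first order expansion error} bounds $\norm{\nabla P(x_{t+1})-\nabla P(x_t)-\nabla^2 P(x_t)s_t}$ by $\tfrac{H_{Lip}}{2}\norm{s_t}^2$; combining this with $\nabla P(x_t)+\nabla^2 P(x_t)s_t=(g_t+H_t s_t)+(\nabla P(x_t)-g_t)+(\nabla^2 P(x_t)-H_t)s_t$, the identity $g_t+H_t s_t=-\lambda_t s_t$ from \eqref{first opt condition}, $\norm{s_t}\le r$, and Lemma~\ref{gradient ascent iterations}, one gets $\norm{\nabla P(x_{t+1})}\le c_1\eps$ with an explicit $c_1$. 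For the Hessian, \eqref{second opt condition} gives $H_t\succeq-\lambda_t\mathbf{I}$, so $\nabla^2 P(x_t)\succeq-(\lambda_t+\eps_2)\mathbf{I}$, and the $H_{Lip}$-Lipschitz continuity of $\nabla^2 P$ (Lemma~\ref{hessian of P}) gives $\nabla^2 P(x_{t+1})\succeq\nabla^2 P(x_t)-H_{Lip}\norm{s_t}\mathbf{I}\succeq-(\lambda_t+\eps_2+H_{Lip}r)\mathbf{I}=-c_2\sqrt{\eps}\,\mathbf{I}$ with an explicit $c_2$; hence $x_{t+1}$ is an $\mathcal{O}(\eps,\sqrt{\eps})$-SSP.

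The delicate point, and the main obstacle, is the constant bookkeeping in the decrease step: $r$ and the stopping threshold for $\lambda_t$ must be chosen so that the guaranteed model decrease $\tfrac12\lambda_t r^2$ strictly dominates $\eps_1 r+\tfrac12\eps_2 r^2+\tfrac{H_{Lip}}{6}r^3$ and still leaves the exact constant $1/(6\sqrt{H_{Lip}})$ — in particular the threshold must be large enough relative to $H_{Lip}r$ for the third-order term to be absorbed. Once $\eps_1$, $\eps_2$ and $r$ are made mutually consistent rather than merely order-correct, the rest is a direct substitution into Lemmas~\ref{gradient of P}, \ref{hessian of P}, \ref{lem:2.3} and \ref{TR sub}.
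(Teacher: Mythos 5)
Your proposal is correct and follows essentially the same route as the paper: the same third-order expansion of $P$, the same substitution of $(g_t,H_t)$ for $(\nabla P(x_t),\nabla^2 P(x_t))$ with error budget $\eps_1\norm{s_t}+\tfrac12\eps_2\norm{s_t}^2$, the same use of the KKT conditions of Lemma~\ref{TR sub} to get the model decrease $\tfrac12\lambda_t\norm{s_t}^2$ with $\norm{s_t}=r$ forced by complementarity, and the same two estimates at termination for $\norm{\nabla P(x_{t+1})}$ and $\nabla^2 P(x_{t+1})$. The constant bookkeeping you flag does close exactly as you anticipate (each error term is $\tfrac16\eps^{1.5}/\sqrt{H_{Lip}}$ and the model decrease is at least $\tfrac12\eps^{1.5}/\sqrt{H_{Lip}}$, matching the paper's $\tfrac74\eps$ and $\tfrac{13}{6}\sqrt{H_{Lip}\eps}$ final bounds).
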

\begin{proof} 
	By Lemma \ref{lem:2.3} and the Cauchy-Swartz inequality,
	\begin{equation}\label{prove-descent}
	\begin{split}
	&P(x_{t+1}) - P(x_t) \\
	\leq &\nabla P(x_t)^\top s_t + \frac{1}{2} s_t^\top \nabla^2 P(x_t) s_t + \frac{H_{Lip}}{6} \norm{s_t}^3 \\
	=&g_t^\top s_t + \frac{1}{2} s_t^\top H_t s_t + (\nabla P(x_t) - g_t)^\top s_t + \frac{1}{2} s_t^\top (\nabla^2 P(x_t) - H_t) s_t + \frac{H_{Lip}}{6} \norm{s_t}^3 \\
	\leq&g_t^\top s_t + \frac{1}{2} s_t^\top H_t s_t + \norm{\nabla P(x_t) - g_t} \norm{s_t} + \frac{1}{2} \norm{\nabla^2 P(x_t) - H_t} \norm{s_t}^2 + \frac{H_{Lip}}{6} \norm{s_t}^3.
	\end{split}
	\end{equation}
	By Lemma \ref{gradient ascent iterations} and  $\norm{s}\leq r= \sqrt{\eps/H_{Lip}}$, we obtain
	\begin{equation}\label{prove descent eq1}
	\norm{\nabla P(x_t) - g_t} \norm{s_t} + \frac{1}{2} \norm{\nabla^2 P(x_t) - H_t} \norm{s_t}^2 \leq \frac{1}{6} \frac{\eps^{1.5}}{\sqrt{H_{Lip}}}.
	\end{equation}
	By Lemma \ref{TR sub}, we have that there exists a dual variable $\lambda_t \geq 0$ such that
	\begin{align}
	g_t + H_t s_t + \frac{\lambda_t H_{Lip}}{2} s_t& = 0, \label{opt first} \\
	H_t + \frac{\lambda_t H_{Lip}}{2} \mathbf{I}
	&\succcurlyeq 0, 
	\label{opt second} \\
	\lambda_t(\norm{s_t}-r)&=0. \label{opt complementary}
	\end{align}	
	By \eqref{opt first} and \eqref{opt second}, we have
	\begin{equation}\label{prove descent eq2}
	g_t^\top s_t = - (H_t s_t + \frac{\lambda_t H_{Lip}}{2}s_t)^\top s_t \leq 0.
	\end{equation}
	Moreover, by Algorithm \ref{MINIMAX-TR} if we have $\lambda_t \geq 2\sqrt{\epsilon/H_{Lip}} >0$. Then, the complementary property \eqref{opt complementary} implies that $\norm{s_t}=\sqrt{\eps/H_{Lip}}$.
	Plugging \eqref{prove descent eq1}, \eqref{opt first} and \eqref{prove descent eq2} into \eqref{prove-descent}, and using $\norm{s_t}=\sqrt{\eps/H_{Lip}}$, we obtain
	\begin{align*}
	P(x_{t+1}) - P(x_t) &\leq \frac{1}{2} g_t^\top s_t + \frac{1}{2} (H_t s_t + g_t)^\top s_t + \frac{H_{Lip}}{6}\norm{s_t}^3 + \frac{1}{6}\frac{\eps^{1.5}}{\sqrt{H_{Lip}}} \\
	&\leq - \frac{H_{Lip} \lambda_t}{4} \frac{\eps}{H_{Lip}} + \frac{1}{3} \frac{\eps^{1.5}}{\sqrt{H_{Lip}}}.
	\end{align*}
	Therefore, if we have $\lambda_t \geq 2\sqrt{\epsilon/H_{Lip}}$, then
	\begin{equation}\label{P-decrease}
	P(x_{t+1}) - P(x_t) \leq -\frac{1}{6} \frac{\eps^{1.5}}{\sqrt{H_{Lip}}}.
	\end{equation}
	Denote $\bar{T}=\min\{t: \lambda_t \leq 2\sqrt{\eps/ H_{Lip}} \}$.
	By summing \eqref{P-decrease} from $0$ to $\bar{T}$ and using the lower bounded property of $P(x)$ in Assumption \ref{basic assumption},  it can be easily verified that $\bar{T}\leq 6\sqrt{H_{Lip}}(P(x_0)-P^*)/\epsilon^{1.5}$.
	If $\lambda_t \leq 2\sqrt{\eps/ H_{Lip}}$, then we prove that $x_{t+1}$ is already an $\mathcal{O}(\eps,\sqrt{\eps})$-SSP as follows. 
	
	On one hand, by Lemma \ref{lem:2.3}, the Cauchy-Swartz inequality and \eqref{opt first}
	\begin{equation}
	\begin{split}
	&\norm{\nabla P(x_{t+1})} \\
	= &\norm{
		\nabla P(x_{t+1}) - \nabla P(x_t) - \nabla^2 P(x_t) s_t + \nabla P(x_t) + \nabla^2 P(x_t) s_t
	} \\
	\leq & \frac{H_{Lip}}{2}\norm{s_t}^2 + \norm{
		\nabla P(x_{t}) - g_t + (\nabla^2 P(x_t) - H_t) s_t
	} + \norm{g_t + H_t s_t} \\
	\leq & \frac{H_{Lip}}{2}\norm{s_t}^2 + \norm{\nabla P(x_{t}) - g_t} + \norm{\nabla^2 P(x_t) - H_t} \norm{s_t} + \norm{\frac{\lambda_t H_{Lip}}{2} s_t} \\
	\leq & \frac{7}{4} \eps,
	\end{split}
	\end{equation}
	where the last inequality is by Lemma \ref{gradient ascent iterations}, the choices of $\eps_1 = \eps/12, \eps_2 = \sqrt{\eps H_{Lip}}/6$, and the fact that $\|s_t\|\leq r=\sqrt{\eps/H_{Lip}}$. 
	On the other hand, by Lemma \ref{hessian of P}, Lemma \ref{gradient ascent iterations} and \eqref{opt second}, we have
	\begin{equation}
	\begin{split}
	&\nabla^2 P(x_{t+1}) \\
	\succcurlyeq & \nabla^2 P(x_t) - H_{Lip} \norm{s_t} \mathbf I \\
	= &H_t - (H_t - \nabla^2 P(x_t)) - H_{Lip} \norm{s_t} \mathbf I \\
	\succcurlyeq & -\frac{\lambda_t H_{Lip}}{2} \mathbf I - (H_t - \nabla^2 P(x_t)) - H_{Lip} \norm{s_t} \mathbf I \\
	\succcurlyeq & - \frac{13}{6} \sqrt{H_{Lip} \eps} \mathbf I,
	\end{split}
	\end{equation}
	which completes the proof.
\end{proof}

\section{A Minimax Trust Region Algorithm with Contractions and Expansions}
In this section, we propose a minimax trust region algorithm with contractions and expansions (MINIMAX-TRACE) for solving \eqref{problem}. The proposed algorithm is inspried by TRACE \cite{TRACE}, which is a trust region variant for solving minimization optimization problem. Firstly, we briefly introduce the TRACE method proposed by Curtis et al. \cite{TRACE} for solving $\min_{x\in\mathbb{R}^n} P(x)$. At the $t$th iteration, it solves the following trust region subproblem to obtain a trial step $s_t$:
\begin{equation}
s_t = \mathop{\arg\min}\limits_{\norm{s} \leq \delta_t} p_t(s),\label{eq:4.1} 
\end{equation}
where $p_t(s) = P(x_t) + \nabla P(x_t)^T s + \frac{1}{2}s^T \nabla^2 P(x_t) s$. 
Then, the ratio of actual-to-predicted reduction is defined as follows, 
\begin{equation}
\rho_t:=\frac{P(x_t) - P(x_t + s_t)}{\norm{s_t}^3}.\label{rho} 
\end{equation}
For some prescribed $\eta \in (0,1)$, the trial step $s_t$ can only be accepted if $\rho_t \geq \eta$.
Otherwise, TRACE employs a CONTRACT subroutine to adjust the trust region radius. More detailedly, it compares the radius that would be obtained via a traditional updating scheme to the norm of the trial step obtained from the following subproblem
\begin{equation}
\min\limits_{s\in\mathbb{R}^n} P(x_t) + \nabla P(x_t)^T s + \frac{1}{2} s^T (\nabla^2 P(x_t) + \lambda \mathbf I) s \label{eq:4.3} 
\end{equation}
for a carefully chosen $\lambda>0$. If the norm of the step resulting from this procedure falls into a suitable range, then it is employed as the trust region radius in the subsequent iteration, as opposed to updating the radius in a more traditional manner. TRACE also proposes a novel expansion procedure which expand the trust region to avoid steps that too small in norm.

The proposed MINIMAX-TRACE algorithm can be regarded as an inexact version of TRACE for solving \eqref{problem} with $P(x) := \max_{y\in \mathcal{Y}}f(x,y)$. Similar to MINIMAX-TR, at each iteration, MINIMAX-TRACE runs enough number of gradient ascent steps on $y$ to obtain $g_t = \nabla_x f(x_t, y_t) $ and $H_t = H(x_t, y_t)$ which are good approximations of $\nabla P(x_t)=\nabla_x f(x_t, y^*(x_t))$ and $\nabla^2 P(x_t)= H(x_t, y^*(x_t))$ respectively. Then it solves the following trust region sub-problem
\begin{equation}
Q_t : \min\limits_{\norm{s} \leq \delta_t} q_t(s),
\end{equation}
where $q_t(s) = P(x_t) + g_t^T s + \frac{1}{2}s^T H_t s$. Note that $q_t(s)$ is an approximation of $p_t(s)$ in \eqref{eq:4.1} since $g_t$ and $H_t$ are approximations of $\nabla P(x_t)$ and $\nabla^2 P(x_t)$.
To decide whether to accept or reject a trial step, MINIMAX-TRACE computes the ratio $\rho_t$ defined as in \eqref{rho}. For some prescribed $\eta \in (0,1)$, the trial step $s_t$ can only be accepted if $\rho_t \geq \eta$. Otherwise, MINIMAX-TRACE employs a similar CONTRACT subroutine to adjust the trust region radius. The main difference is that it compares the radius that would be obtained via a traditional updating scheme to the norm of the trial step obtained from the following subproblem
\begin{equation}
Q_t(\lambda) : \min\limits_{s\in\mathbb{R}^n} P(x_t) + g_t^T s + \frac{1}{2} s^T (H_t + \lambda \mathbf I) s 
\end{equation}
for a carefully chosen $\lambda$ to obtain the new trust region radius, which can be regarded as an inexact version of \eqref{eq:4.3}. The proposed MINIMAX-TRACE is formally stated in Algorithm \ref{MINIMAX-TRACE}.

\begin{algorithm}[h!]
	\footnotesize
	\caption{Minimax Trust Region Algorithm with Contractions and Expansions (MINIMAX-TRACE)}
	\label{MINIMAX-TRACE}	
	\begin{algorithmic}[1]
		\REQUIRE $y_{-1}=\mathbf 0, \eta_y, \eta \in (0,1), 0 < \gamma_{C} < 1 < \gamma_E, \gamma_{\lambda} > 1, 0 < \underline{\sigma} \leq \overline{\sigma}, \delta_0, \Delta_0 > 0$ with $\delta_0 \leq \Delta_0$, and $\sigma_0>0$ with $\sigma_0 \geq \underline{\sigma}$;
		\FOR{$t=0, \cdots T-1$}
			\STATE $\widetilde{y}_0 = y_{t-1}$; 
			\FOR{$k=0, \cdots N_t-1$}
				\STATE $\widetilde{y}_{k+1} = \widetilde{y}_k + \eta_y \nabla_y f(x_t, \widetilde{y}_k)$;
			\ENDFOR
			\STATE $y_t = \widetilde{y}_{N_t}$;
			\STATE $g_t = \nabla_x f(x_t,y_t)$;
			\STATE $H_t = H(x_t,y_t)$;
			\IF{$t = 0$} 
				\STATE compute $(s_0,\lambda_0) $ by solving $Q_0$, then set $\rho_0$ as in \eqref{rho};
			\ENDIF
			\STATE $x_{t+1}, \delta_{t+1}, \Delta_{t+1}, \sigma_{t+1} =  \text{TRACE}(x_t,s_t,\lambda_t,\delta_t,\Delta_t,\sigma_t,\rho_t)$;
			
			\STATE compute $(s_{t+1},\lambda_{t+1}) $ by solving $Q_{t+1}$, then set $\rho_{t+1}$ as in \eqref{rho};
			\IF{$\rho_t < \eta$}
				\STATE $\sigma_{t+1} = \max \{ \sigma_t,\lambda_{t+1}/\norm{s_{t+1}}\}$;
			\ENDIF
		\ENDFOR
	\end{algorithmic}
\end{algorithm}

\begin{algorithm}[h!]
	\footnotesize
	\caption{TRACE}	
	\label{TRACE}
	%		\textbf{Input:} $\eta,\gamma_C,\gamma_E,\gamma_{\lambda},\underline{\sigma},\overline{\sigma}$ \\
	\begin{algorithmic}[1]
		\REQUIRE $x_t,s_t,\lambda_t,\delta_t,\Delta_t,\sigma_t,\rho_t$; 
		\IF{$\rho_t \geq \eta$ and either $\lambda_t \leq \sigma_t \left\Vert s_t \right\Vert$ or $\norm {s_t}$=$\Delta_t$} 
		\STATE $x_{t+1} = x_t + s_t$; 
		\STATE $\Delta_{t+1} = \max \{ \Delta_t,\gamma_E \norm {s_t} \} $;
		\STATE $\delta_{t+1} = \min\{\Delta_{t+1},\max\{\delta_t,\gamma_E \norm {s_t} \}\}$;
		\STATE $\sigma_{t+1} = \max\{\sigma_t,\lambda_t/ \norm {s_t}\}$;
		\ELSIF{$\rho_t < \eta$} 
			\STATE $x_{t+1} = x_t$;
			\STATE $\Delta_{t+1}=\Delta_t$;
			\STATE $\delta_{t+1}=\text{CONTRACT}(x_t,\delta_t,\sigma_t,s_t,\lambda_t)$;
		\ELSIF{$\rho_t \geq \eta,\lambda_t > \sigma_t \norm{s_t}, \text{ and } \norm{s_t} < \Delta_t$}
		%(i.e., if $\rho_t \geq \eta,\lambda_t > \sigma_t \norm{s_t}, \text{ and } \norm{s_t} < \Delta_t$)
			\STATE $x_{t+1} = x_t$;
			\STATE $\Delta_{t+1} = \Delta_t$; 
			\STATE $\delta_{t+1} = \min \{ \Delta_{t+1},\lambda_t/\sigma_t \}$;
			\STATE $\sigma_{t+1} = \sigma_t$;
		\ENDIF
		\RETURN $x_{t+1}, \delta_{t+1}, \Delta_{t+1}, \sigma_{t+1}$;
	\end{algorithmic}		
\end{algorithm}

\begin{algorithm}[h!]
	\footnotesize		
	\caption{CONTRACT}
	\label{CONTRACT}
	\begin{algorithmic}[1]
		\REQUIRE $x_t,\delta_t,\sigma_t,s_t,\lambda_t$;
		\IF{$\lambda_t<\underline{\sigma}\norm {s_t}$}
			\STATE $\hat{\lambda}=\lambda_t + (\underline{\sigma}\norm {g_t})^{1/2}$;
			\STATE $\lambda = \hat{\lambda}$;
			\STATE set $s^1$ as the solution of $Q_t(\lambda)$;
			\IF{$\lambda/\norm{s^1} \leq \overline{\sigma}$}
			\STATE \textbf{return} $\delta_{t+1} = \norm{s^1}$;
			\ELSE
				\STATE compute $\lambda \in (\lambda_t,\hat{\lambda})$ so the solution $s^2$ of $Q_t(\lambda)$ yields $\underline{\sigma} \leq \lambda/\norm{s^2} \leq \overline{\sigma}$;
				\RETURN $\delta_{t+1} = \norm{s^2}$;
			\ENDIF
		\ELSE %{$\lambda_t \geq \underline{\sigma}\norm {s_t}$} 
			\STATE $\lambda = \gamma_{\lambda}\lambda_t$; 
			\STATE set $s^3$ as the solution of $Q_t(\lambda)$;
			\STATE $\delta_{t+1}=\max \{\norm{s^3}, \gamma_C \norm {s_t}\}$.
		    \STATE \textbf{return} $\delta_{t+1}$.
%			\IF{$\norm s \geq \gamma_C \norm {s_t}$}
%				\STATE \textbf{return} $\delta_{t+1} = \norm s$;
%			\ELSE 
%				\STATE \textbf{return} $\delta_{t+1} = \gamma_C \norm {s_t}$;
%			\ENDIF
		\ENDIF
	\end{algorithmic}
\end{algorithm}

For convenience, we distinguish between different types of iterations by partitioning the set of iteration numbers into the sets of accepted ($\mathcal{A}$), contraction ($\mathcal{C}$), and expansion ($\mathcal{E}$) steps as follows,
\begin{align*}
\mathcal{A} &:= \{
t\in \mathbb{N}: \rho_t \geq \eta \text{ and either } \lambda_t \leq \sigma_t \norm{s_t} \text{ or } \norm{s_t} = \Delta_t
\}, \\
\mathcal{C} &:= \{
t\in \mathbb{N}: \rho_t < \eta
\}, \\
\mathcal{E} &:= \{
t\in \mathbb{N}: t \notin \mathcal{A} \cup \mathcal{C}
\}.
\end{align*}
We also partition the set of accepted steps into two disjoint subsets $\mathcal{A}_\Delta := \{ t \in \mathcal{A} : \norm {s_t} = \Delta_t \}$ and $\mathcal{A}_\sigma := \{t \in \mathcal{A} : t \notin \mathcal{A}_{\Delta}\}$.
%	
%	In the remaining part of this section, we generalize results from \cite{TRACE} to adapt for the problem \ref{problem}. For the sake of completeness of the
%	manuscript, some Lemmas and proofs are restated from \cite{TRACE}.
Firstly, we make the following assumption for Algorithm \ref{MINIMAX-TRACE}.
\begin{assumption}\label{bound assumption}
	$\norm{\nabla P(x)}$ is upper bounded by a constant $M_1>0$.
	We assume the gradient estimator sequence $\{ g_t \}$ has $g_t \neq 0$ for all $t \in \mathbb{N}$. Furthermore, suppose there exists sufficiently large $N_t$ and a constant $M_2 > 0$, such that $\norm{y_t - y^*(x_t)} \leq \min\{ C_1\norm{s_t}^2/l, M_2/l, C_2 \norm{s_t}/L_H, M_2/L_H \} $ for all $t \in \mathbb{N}$.
\end{assumption}
Although we do not prove a theoretical bound for $N_t$ that in Assumption \ref{bound assumption}. We know that if $\norm{s_t}$ is in the order of $\mathcal{O} (\eps^\alpha)$, many algorithms including Nestrov's accelerated gradient algorithm can find a $y_t$ that satisfies Assumption \ref{bound assumption} after $N_t = \mathcal{O} (\alpha\log \frac{1}{\eps})$ iterations under strong concavity. Compared to the order of the number of iterations $\mathcal{O}(\eps^{-1.5})$ that will be proved latter in this section, it can be ignored. In practice, since $\|s_t\|>0$ always holds, it can be easily satisfied.   

%	By Lemmas \ref{gradient of P} and \ref{hessian of P}, we can easily prove the follow lemma.
\begin{lemma}\quad\label{bound lemma}
	Suppose Assumptions \ref{basic assumption} and \ref{bound assumption} hold. Then, we have that $\norm{\nabla P(x_t)-g_t} \leq \min\{C_1 \norm{s_t}^2,M_2\} $ and
	$\norm{\nabla^2 P(x_t)-H_t} \leq \min\{C_2 \norm{s_t},M_2\}$ for all $t \in \mathbb{N}$.
\end{lemma}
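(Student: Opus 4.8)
The plan is to reduce the claim to the identities established in Lemma \ref{gradient of P} and Lemma \ref{hessian of P} --- namely $\nabla P(x_t) = \nabla_x f(x_t, y^*(x_t))$ and $\nabla^2 P(x_t) = H(x_t, y^*(x_t))$ --- and then convert Assumption \ref{bound assumption}, which controls $\norm{y_t - y^*(x_t)}$, into bounds on the surrogate quantities $g_t$ and $H_t$ via the relevant Lipschitz constants.

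First, since $g_t = \nabla_x f(x_t, y_t)$ and $f$ is $\ell$-smooth (Assumption \ref{basic assumption}(2)), I would write
\[
\norm{\nabla P(x_t) - g_t} = \norm{\nabla_x f(x_t, y^*(x_t)) - \nabla_x f(x_t, y_t)} \leq \ell\,\norm{y_t - y^*(x_t)}.
\]
Second, since $H_t = H(x_t, y_t)$ and $H(x,\cdot)$ is $L_H$-Lipschitz by Lemma \ref{hessian of P}, I would write
\[
\norm{\nabla^2 P(x_t) - H_t} = \norm{H(x_t, y^*(x_t)) - H(x_t, y_t)} \leq L_H\,\norm{y_t - y^*(x_t)}.
\]

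Finally, I would invoke Assumption \ref{bound assumption}, which (for the chosen $N_t$) guarantees
\[
\norm{y_t - y^*(x_t)} \leq \min\Bigl\{\tfrac{C_1\norm{s_t}^2}{\ell},\ \tfrac{M_2}{\ell},\ \tfrac{C_2\norm{s_t}}{L_H},\ \tfrac{M_2}{L_H}\Bigr\}.
\]
Multiplying the first displayed estimate by $\ell$ and keeping only the first two terms of the minimum gives $\norm{\nabla P(x_t) - g_t} \leq \min\{C_1\norm{s_t}^2, M_2\}$; multiplying the second estimate by $L_H$ and keeping the last two terms gives $\norm{\nabla^2 P(x_t) - H_t} \leq \min\{C_2\norm{s_t}, M_2\}$. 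Since $t \in \mathbb{N}$ was arbitrary, this proves the lemma.

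There is no genuine obstacle here: the statement is essentially a restatement of Assumption \ref{bound assumption} phrased in terms of gradient/Hessian error rather than iterate error, and the argument mirrors the corresponding estimate in the proof of Lemma \ref{gradient ascent iterations}. The only point requiring care is bookkeeping of constants --- the factors $1/\ell$ and $1/L_H$ inside the assumed bound are precisely what cancel against the $\ell$- and $L_H$-Lipschitz inequalities, while the two $M_2$-terms supply the uniform bounds.
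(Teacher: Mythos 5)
Your proposal is correct and follows exactly the same route as the paper's proof: express $\nabla P(x_t)$ and $\nabla^2 P(x_t)$ via Lemmas \ref{gradient of P} and \ref{hessian of P}, apply the $\ell$-smoothness and $L_H$-Lipschitz bounds to reduce everything to $\norm{y_t - y^*(x_t)}$, and then invoke Assumption \ref{bound assumption}. The constant bookkeeping you describe is precisely what the paper does implicitly.
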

\begin{proof}
	By Lemmas \ref{gradient of P} and \ref{hessian of P}, Assumptions \ref{basic assumption} and \ref{bound assumption}, we get 
	\begin{align}
	\norm{\nabla P(x_t) - g_t} &= \norm{\nabla_x f(x_t,y^*(x_t)) - \nabla_x f(x_t,y_t)} \nonumber\\
	&\leq 
	l\norm{y_t-y^*(x_t)} \leq \min\{C_1 \norm{s_t}^2,M_2\},\nonumber\\
	\norm{\nabla^2 P(x_t) - H_t} &= \norm{H(x_t,y^*(x_t))-H(x_t,y_t)} \nonumber\\
	& \leq L_H \norm{y_t-y^*(x_t)} \leq \min\{C_2 \norm{s_t},M_2\},\nonumber
	\end{align}
	which complete the proof.
\end{proof}

By Lemma \ref{bound lemma}, we can prove the following lemma which shows that $\{ \norm{g_t}\}$ and $\{ \norm{H_t}\}$ are upper bounded.
\begin{lemma}\quad
	If Assumptions \ref{basic assumption} and \ref{bound assumption} hold, then we have $\norm{g_t} \leq g_{max}:= M_1 + M_2$ and $\norm{H_t} \leq H_{max}:= M_2 + L_P$ for all $t \in \mathbb{N}$.
\end{lemma}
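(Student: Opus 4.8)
The plan is to derive both bounds by a one-step triangle inequality, splitting each estimator into the corresponding exact quantity for $P$ plus the estimation error, which is already controlled by Lemma \ref{bound lemma}. So the two ingredients are: (i) uniform bounds on $\norm{\nabla P(x)}$ and $\norm{\nabla^2 P(x)}$, and (ii) the error bounds $\norm{\nabla P(x_t)-g_t}\le M_2$ and $\norm{\nabla^2 P(x_t)-H_t}\le M_2$ obtained from Lemma \ref{bound lemma} by dropping the $\norm{s_t}$-dependent terms in the minima.

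First I would handle the gradient. By Assumption \ref{bound assumption}, $\norm{\nabla P(x_t)}\le M_1$. By Lemma \ref{bound lemma}, $\norm{g_t-\nabla P(x_t)}\le\min\{C_1\norm{s_t}^2,M_2\}\le M_2$. Hence $\norm{g_t}\le\norm{\nabla P(x_t)}+\norm{g_t-\nabla P(x_t)}\le M_1+M_2=g_{max}$.

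Next I would handle the Hessian. The point is that $\nabla P$ is $L_P$-Lipschitz by Lemma \ref{gradient of P}, and since $P$ is twice differentiable (Lemma \ref{hessian of P} gives $\nabla^2 P(x)=H(x,y^*(x))$), this forces $\norm{\nabla^2 P(x)}\le L_P$ at every $x$: for any unit vector $u$, $u^\top\nabla^2 P(x)u=\lim_{h\to 0}u^\top(\nabla P(x+hu)-\nabla P(x))/h$, whose modulus is at most $L_P$. Combining this with Lemma \ref{bound lemma}, which gives $\norm{H_t-\nabla^2 P(x_t)}\le\min\{C_2\norm{s_t},M_2\}\le M_2$, we obtain $\norm{H_t}\le\norm{\nabla^2 P(x_t)}+\norm{H_t-\nabla^2 P(x_t)}\le L_P+M_2=H_{max}$.

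The hard part will be essentially nothing beyond bookkeeping; the only step that deserves a sentence of justification is the passage from the $L_P$-Lipschitz continuity of $\nabla P$ to the uniform operator-norm bound $\norm{\nabla^2 P(x)}\le L_P$, which is the standard fact just sketched. Everything else is the triangle inequality applied to the estimates already in hand.
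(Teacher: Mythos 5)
Your proposal is correct and follows essentially the same route as the paper: the triangle inequality combined with $\norm{\nabla P(x_t)}\le M_1$ from Assumption \ref{bound assumption}, the $M_2$ error bounds from Lemma \ref{bound lemma}, and $\norm{\nabla^2 P(x)}\le L_P$ from the $L_P$-Lipschitz continuity of $\nabla P$. You merely spell out the last of these facts, which the paper uses implicitly.
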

\begin{proof}
	By Assumption \ref{bound assumption}, the Cauchy-Swartz inequality, and Lemma \ref{bound lemma}, we get that 
	\begin{align*}
	\norm{g_t} &= \norm{g_t - \nabla P(x_t)} + \norm{\nabla P(x_t)} \leq M_2 + M_1,\\
	\norm{H_t} &= \norm{H_t - \nabla^2 P(x_t)} + \norm{\nabla^2 P(x_t)} \leq M_2 + L_P,
	\end{align*}
	which completes the proof.
\end{proof}

By Lemma 3.2 and Lemma 3.3 in \cite{TRACE}, we immediately obtain a general lower bound of $\norm{s_t}$ and the estimate of decrease in the model function shown in the following two lemmas.
\begin{lemma}\quad\label{s lower bound}
	For any $t \in \mathbb{N}$, the trial step $s_t$ satisfies \begin{equation}
	\norm{s_t} \geq \min \left\{
	\delta_t,\frac{\norm{g_t}}{\norm{H_t}}
	\right\} > 0.
	\end{equation}
\end{lemma}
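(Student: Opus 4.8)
The plan is to recall the standard trust-region subproblem analysis applied to the model $q_t(s) = P(x_t) + g_t^\top s + \tfrac12 s^\top H_t s$ with radius $\delta_t$, and extract the lower bound on $\|s_t\|$ from the optimality conditions in Lemma~\ref{TR sub}. By that lemma there is a multiplier $\lambda_t \ge 0$ with $g_t + (H_t + \lambda_t \mathbf I)s_t = 0$ and $H_t + \lambda_t \mathbf I \succeq 0$, together with the complementarity $\lambda_t(\delta_t - \|s_t\|) = 0$. I would split the argument into the two cases dictated by complementarity: either the trust-region constraint is active ($\|s_t\| = \delta_t$), in which case the bound $\|s_t\| \ge \delta_t \ge \min\{\delta_t, \|g_t\|/\|H_t\|\}$ is immediate; or $\lambda_t = 0$, in which case $H_t \succeq 0$ and $H_t s_t = -g_t$, so $\|s_t\| = \|H_t^{-1} g_t\| \ge \|g_t\|/\|H_t\|$ when $H_t$ is invertible (and if $H_t$ is singular but $g_t \ne 0$, then $g_t$ lies in the range of $H_t$ and one still gets $\|s_t\| \ge \|g_t\|/\|H_t\|$ by restricting to the complement of the kernel).

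First I would state the optimality system from Lemma~\ref{TR sub}. Then, in the interior case, I would bound $\|g_t\| = \|(H_t + \lambda_t \mathbf I) s_t\| \le (\|H_t\| + \lambda_t)\|s_t\|$; since $\lambda_t = 0$ here this gives $\|s_t\| \ge \|g_t\|/\|H_t\|$ directly and cleanly, with no invertibility subtleties needed. In the boundary case I simply use $\|s_t\| = \delta_t$. Combining, $\|s_t\| \ge \min\{\delta_t, \|g_t\|/\|H_t\|\}$ in all cases. Strict positivity then follows because $\delta_t > 0$ is maintained by the algorithm (all radius updates in TRACE and CONTRACT keep $\delta_{t+1} > 0$) and $\|g_t\| > 0$ by Assumption~\ref{bound assumption}, while $\|H_t\|$ is finite; hence the minimum of two strictly positive quantities is strictly positive.

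The one point requiring a little care — and the reason the estimate $\|g_t\| \le (\|H_t\| + \lambda_t)\|s_t\|$ is the right vehicle rather than writing $s_t = -(H_t+\lambda_t\mathbf I)^{-1}g_t$ — is that $H_t + \lambda_t \mathbf I$ need not be strictly positive definite (it is only guaranteed $\succeq 0$ on the boundary, the so-called hard case). Working from $g_t + (H_t + \lambda_t \mathbf I)s_t = 0$ and the triangle/operator-norm inequality avoids inverting a possibly singular matrix entirely, so the argument goes through uniformly. This is exactly the content of Lemma~3.2 of \cite{TRACE}, and since the model $q_t$ here plays the role of $p_t$ there with $g_t, H_t$ in place of $\nabla P(x_t), \nabla^2 P(x_t)$, the proof transfers verbatim; I would cite \cite{TRACE} for the details after presenting this short self-contained derivation.
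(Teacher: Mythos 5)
Your proof is correct and is essentially the paper's own argument: the paper simply invokes Lemma~3.2 of \cite{TRACE}, whose proof is exactly the case split you give on the complementarity condition from Lemma~\ref{TR sub} (boundary case $\norm{s_t}=\delta_t$ versus interior case $\lambda_t=0$ with $\norm{g_t}\leq\norm{H_t}\,\norm{s_t}$), applied to the inexact model with $g_t, H_t$ in place of the true gradient and Hessian. Your added remarks on the hard case and on strict positivity via $g_t\neq 0$ from Assumption~\ref{bound assumption} are consistent with what the citation supplies.
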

\begin{lemma}\quad\label{gap between P(x_t) and q_t(s_t)}
	For any $t \in \mathbb{N}$, the trial step $s_t$ and dual variable $\lambda_t$ satisfy 
	\begin{equation}
	P(x_t) - q_t(s_t) = \frac{1}{2} s_t^T(H_t + \lambda_t I)s_t + \frac{1}{2} \lambda_t \norm{s_t}^2.
	\end{equation}
	In addition, for any $t \in \mathbb{N}$, the trial steps $s_t$ satisfies
	\begin{equation}
	P(x_t) - q_t(s_t) \geq \frac{1}{2} \norm{g_t} \min \left\{ \delta_t, \frac{\norm{g_t}}{\norm{H_t}}
	\right\}.
	\end{equation}
\end{lemma}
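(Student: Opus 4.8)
\noindent\emph{Proof idea.} The plan is to obtain both assertions directly from the optimality characterization of the trust region subproblem $Q_t$ (Lemma \ref{TR sub}), supplemented by a Cauchy-point comparison for the quantitative lower bound.

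First I would apply Lemma \ref{TR sub} to $Q_t$ — identifying the quadratic model data with $\nabla F(x)\leftrightarrow g_t$, $\nabla^2 F(x)\leftrightarrow H_t$ and the radius with $\delta_t$ — to produce a multiplier $\lambda_t\ge 0$ with $g_t+(H_t+\lambda_t I)s_t=0$, $H_t+\lambda_t I\succcurlyeq 0$ and $\lambda_t(\delta_t-\norm{s_t})=0$. The identity then follows by a single substitution: since $P(x_t)-q_t(s_t)=-g_t^\top s_t-\frac12 s_t^\top H_t s_t$ and $g_t=-(H_t+\lambda_t I)s_t$, we have $-g_t^\top s_t=s_t^\top H_t s_t+\lambda_t\norm{s_t}^2$, whence $P(x_t)-q_t(s_t)=\frac12 s_t^\top H_t s_t+\lambda_t\norm{s_t}^2=\frac12 s_t^\top(H_t+\lambda_t I)s_t+\frac12\lambda_t\norm{s_t}^2$.

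For the lower bound I would compare $q_t(s_t)$ to $q_t$ evaluated along the steepest-descent ray. Assumption \ref{bound assumption} guarantees $g_t\ne 0$, so $s(\tau):=-\tau g_t/\norm{g_t}$ is feasible for $Q_t$ whenever $\tau\in[0,\delta_t]$, and a direct estimate gives $q_t(s(\tau))-P(x_t)\le -\tau\norm{g_t}+\frac{\tau^2}{2}\norm{H_t}$. Minimizing this scalar quadratic over $[0,\delta_t]$ — in the case $\norm{g_t}/\norm{H_t}\le\delta_t$ the minimizer is interior with value $-\norm{g_t}^2/(2\norm{H_t})$; in the case $\norm{g_t}/\norm{H_t}>\delta_t$ the function is monotone decreasing on $[0,\delta_t]$, hence minimized at $\tau=\delta_t$ with value at most $-\frac12\delta_t\norm{g_t}$ (using $\delta_t\norm{H_t}<\norm{g_t}$); and with the convention $\norm{g_t}/\norm{H_t}=+\infty$ when $H_t=0$ — yields in all cases a value at most $-\frac12\norm{g_t}\min\{\delta_t,\norm{g_t}/\norm{H_t}\}$. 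Since $s_t$ is the global minimizer of $Q_t$, we have $q_t(s_t)\le q_t(s(\tau))$ for the minimizing $\tau$, and rearranging gives the claimed inequality.

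I expect no genuine obstacle here: only the substitution above and the one-dimensional minimization with its two-case split are required, and the sole subtlety is bookkeeping of the degenerate situations $H_t=0$ and $\lambda_t=0$ with $\norm{s_t}<\delta_t$, both absorbed by the case analysis and the convention just stated. Indeed this is precisely the inexact counterpart of Lemmas 3.2–3.3 in \cite{TRACE}, whose proofs depend only on the subproblem data $g_t,H_t,\delta_t$ and are insensitive to how accurately $g_t,H_t$ approximate $\nabla P(x_t),\nabla^2 P(x_t)$; moreover Lemma \ref{s lower bound}, which may be assumed, already ensures $\norm{s_t}>0$ so that every quantity appearing above is well defined.
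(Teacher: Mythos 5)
Your proposal is correct and is essentially the paper's own approach: the paper simply imports this lemma from Lemmas 3.2--3.3 of \cite{TRACE} without writing out the argument, and your KKT-substitution identity plus Cauchy-point comparison is precisely the standard proof underlying those cited lemmas, unaffected by the fact that $g_t,H_t$ are only inexact versions of $\nabla P(x_t),\nabla^2 P(x_t)$.
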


%	The next three lemmas show that the trust region radius is reduced when the CONTRACT subroutine is called, otherwise the trust region radius is increased. What's more, the trust region radius $\delta_t$ is upper bounded by a nondecreasing sequence $\{ \Delta_t\}$.
%	\begin{lem}[\cite{TRACE}, Lemma 3.4]
%		For any $t \in \mathbb{N}$, if $t \in \mathcal{C}$, then $\delta_{t+1} < \delta_t$ and $\lambda_{t+1} \geq \lambda_t$.
%	\end{lem}
%	
%	\begin{lem}[\cite{TRACE}, Lemma 3.5]
%		For ant $t \in \mathbb{N}$, there holds $\delta_t \leq \Delta_t \leq \Delta_{t+1}$.
%	\end{lem}
%	
%	\begin{lem}[\cite{TRACE}, Lemma 3.6]\label{delta relationship}
%		For ant $t \in \mathbb{N}$, if $t \in \mathcal{A} \cup \mathcal{E}$, then $\delta_{t+1} \geq \delta_t$.
%	\end{lem}

The next lemma shows that the trust region radius is reduced when the CONTRACT subroutine is called, otherwise the trust region radius is increased. Moreover, the trust region radius $\delta_t$ is upper bounded by a nondecreasing sequence $\{ \Delta_t\}$.
\begin{lemma}(Lemmas 3.4-3.6 in \cite{TRACE}) \label{radius relation}
	For any $t \in \mathbb{N}$, if $t \in \mathcal{C}$, then $\delta_{t+1} < \delta_t$ and $\lambda_{t+1} \geq \lambda_t$, otherwise, $\delta_{t+1} \geq \delta_t$. In addition, there holds $\delta_t \leq \Delta_t \leq \Delta_{t+1}$ for any $t \in \mathbb{N}$.
\end{lemma}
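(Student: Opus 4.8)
The plan is to establish all three assertions by a single induction on $t$, carrying along the auxiliary invariant $\delta_t \le \Delta_t$, since every radius update in Algorithm \ref{TRACE} and Algorithm \ref{CONTRACT} caps $\delta_{t+1}$ at $\Delta_{t+1}$. The base case $\delta_0 \le \Delta_0$ is part of the input of Algorithm \ref{MINIMAX-TRACE}, and the monotonicity $\Delta_t \le \Delta_{t+1}$ is read off directly from Algorithm \ref{TRACE}: in the accepted branch $\Delta_{t+1} = \max\{\Delta_t, \gamma_E \norm{s_t}\} \ge \Delta_t$, and in the contraction and expansion branches $\Delta_{t+1} = \Delta_t$. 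It then remains to compare $\delta_{t+1}$ with $\delta_t$, separately for $t \in \mathcal{A}$, $t \in \mathcal{E}$, $t \in \mathcal{C}$, and to check $\delta_{t+1} \le \Delta_{t+1}$ in each case. For $t \in \mathcal{A}$, $\delta_{t+1} = \min\{\Delta_{t+1}, \max\{\delta_t, \gamma_E \norm{s_t}\}\}$; since $\gamma_E > 1$ the inner term is $\ge \delta_t$, and by the induction hypothesis $\Delta_{t+1} \ge \Delta_t \ge \delta_t$, so both arguments of the outer minimum dominate $\delta_t$, giving $\delta_t \le \delta_{t+1} \le \Delta_{t+1}$. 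For $t \in \mathcal{E}$, the defining conditions include $\lambda_t > \sigma_t \norm{s_t}$, and since $\norm{s_t} > 0$ by Lemma \ref{s lower bound} and $\sigma_t \ge \underline{\sigma} > 0$, this forces $\lambda_t > 0$; the complementarity condition of Lemma \ref{TR sub} applied to $Q_t$ (radius $\delta_t$) then yields $\norm{s_t} = \delta_t$, so $\lambda_t/\sigma_t > \delta_t$, and with $\Delta_{t+1} = \Delta_t \ge \delta_t$ we get $\delta_t \le \delta_{t+1} = \min\{\Delta_{t+1}, \lambda_t/\sigma_t\} \le \Delta_{t+1}$.

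The contraction case $t \in \mathcal{C}$ is where the actual work lies, and the key analytic ingredient is the classical monotonicity of the regularized subproblem $Q_t(\lambda)$: its solution $s(\lambda)$ is unique with $\norm{s(\lambda)}$ continuous and strictly decreasing in $\lambda$ on the range where $H_t + \lambda \mathbf I$ is positive definite, with $s(\lambda_t) = s_t$ in the nonsingular case (and $\norm{s(\lambda)} \le \norm{s_t}$ for $\lambda \ge \lambda_t$, with strict decrease for $\lambda > \lambda_t$, in the singular ``hard case''). In the branch $\lambda_t < \underline{\sigma}\norm{s_t}$ of Algorithm \ref{CONTRACT}, the regularizer $\hat\lambda = \lambda_t + (\underline{\sigma}\norm{g_t})^{1/2}$ is strictly larger than $\lambda_t$, precisely because $g_t \neq 0$ by Assumption \ref{bound assumption}, so both returned candidates $s^1$ and $s^2$ (the latter at some $\lambda \in (\lambda_t, \hat\lambda)$) have norm strictly below $\norm{s_t} \le \delta_t$, i.e. $\delta_{t+1} < \delta_t$. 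In the branch $\lambda_t \ge \underline{\sigma}\norm{s_t}$ we have $\lambda_t > 0$, so $\gamma_\lambda \lambda_t > \lambda_t$ and $\norm{s^3} < \norm{s_t}$; together with $\gamma_C < 1$ this gives $\delta_{t+1} = \max\{\norm{s^3}, \gamma_C \norm{s_t}\} < \norm{s_t} \le \delta_t$. In both branches $\delta_{t+1} < \delta_t \le \Delta_t = \Delta_{t+1}$, closing the induction on the invariant as well. For the remaining claim $\lambda_{t+1} \ge \lambda_t$, note that $t \in \mathcal{C}$ means $x_{t+1} = x_t$, so $Q_{t+1}$ is built at the same point as $Q_t$ (with the estimators tied to the unchanged exact quantities via Lemma \ref{bound lemma}) but over the strictly smaller radius $\delta_{t+1} < \delta_t = \norm{s_t}$; since the trust-region multiplier is nonincreasing in the radius, $\lambda_{t+1} \ge \lambda_t$ follows.

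The routine portions are the cases $\mathcal{A}$ and $\mathcal{E}$ and the invariant $\delta_t \le \Delta_t$. The main obstacle is the contraction case: one must make precise the monotonicity and continuity of $\lambda \mapsto \norm{s(\lambda)}$, including a short separate treatment of the hard case (where $s_t$ need not equal $s(\lambda_t)$), and one must reconcile the recomputation of $g_{t+1}$ and $H_{t+1}$ with $x_{t+1} = x_t$ for the inequality $\lambda_{t+1} \ge \lambda_t$. All of this mirrors Lemmas 3.4--3.6 of \cite{TRACE}; the only change is that the optimality conditions (Lemma \ref{TR sub}) are applied to $q_t$ and $Q_t(\lambda)$ rather than to the exact models $p_t$ and \eqref{eq:4.3}, which is legitimate because those conditions do not rely on exactness of $g_t$ or $H_t$.
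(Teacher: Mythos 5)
The paper does not actually prove this lemma: it is imported verbatim from Lemmas 3.4--3.6 of \cite{TRACE}, with the implicit understanding that the argument survives the replacement of the exact model $p_t$ by the inexact model $q_t$. Your reconstruction follows exactly the route of those lemmas --- an induction carrying the invariant $\delta_t\le\Delta_t$, the three-way case split over $\mathcal{A}$, $\mathcal{E}$, $\mathcal{C}$, and the monotonicity of $\lambda\mapsto\norm{s(\lambda)}$ for the regularized subproblem $Q_t(\lambda)$ --- and the portions concerning $\Delta_{t+1}\ge\Delta_t$, $\delta_{t+1}\ge\delta_t$ on $\mathcal{A}\cup\mathcal{E}$, and $\delta_{t+1}<\delta_t$ on $\mathcal{C}$ are correct as written. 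In particular you correctly isolate the two points where inexactness could have interfered but does not: $g_t\neq 0$ (Assumption \ref{bound assumption}) is what makes $\hat\lambda>\lambda_t$, and the optimality conditions of Lemma \ref{TR sub} apply to $q_t$ regardless of how $g_t$ and $H_t$ were produced.

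The one place where the adaptation is not automatic, and where your proposal does not fully close the gap, is the claim $\lambda_{t+1}\ge\lambda_t$ for $t\in\mathcal{C}$. The TRACE argument uses that a contraction leaves the quadratic model untouched ($x_{t+1}=x_t$ implies $p_{t+1}=p_t$), so that $Q_{t+1}$ is the \emph{same} subproblem over a strictly smaller radius and the multiplier can only increase. In Algorithm \ref{MINIMAX-TRACE}, however, iteration $t+1$ re-runs the inner gradient-ascent loop and recomputes $g_{t+1}=\nabla_xf(x_t,y_{t+1})$ and $H_{t+1}=H(x_t,y_{t+1})$ from a new iterate $y_{t+1}\neq y_t$, so in general $q_{t+1}\neq q_t$ even though $x_{t+1}=x_t$. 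Lemma \ref{bound lemma} only bounds the distance of each model to the exact one; it does not make the two models equal, and a perturbation of $g$ and $H$ can in principle lower the multiplier (for instance, if the unconstrained minimizer of the perturbed model happens to fall inside the shrunken trust region, then $\lambda_{t+1}=0$). To make the claim rigorous one must either freeze $(y_t,g_t,H_t)$ across rejected steps, or quantify the model perturbation and prove the multiplier inequality up to that perturbation. To be fair, this gap is inherited from the paper itself, which cites the TRACE lemmas without commenting on the recomputation; but your sentence invoking Lemma \ref{bound lemma} does not resolve it.
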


%The next result shows that we cannot have two consecutive expansion steps.
%\begin{lemma}\quad[\cite{TRACE}, Lemma 3.7]
%	For any $t \in \mathbb{N}$, if $t \in \mathcal{C} \cup \mathcal{E}$, then $(t+1) \notin \mathcal{E}$.
%\end{lemma}

Next, we show that if the dual variable for the trust region constraint $\lambda_t$ is sufficiently large, the trial step yields sufficient reduction in the objective.
\begin{lemma}\quad\label{lambda success step}
	For any $t \in \mathbb{N}$, if the trial step $s_t$ and dual variable $\lambda_t$ satisfy 
	\begin{equation}\label{lambda assumption1}
	\lambda_t \geq 2 L_P + H_{max} + 2 (\eta + C_1)    
	\norm{s_t},
	\end{equation}
	then $\norm{s_t} = \delta_t$ and $\rho_t \geq \eta$.	
\end{lemma}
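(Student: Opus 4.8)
The plan is to prove the two assertions in turn, the norm identity first and then the sufficient‑reduction bound. For $\norm{s_t}=\delta_t$, I would invoke the trust region optimality conditions of Lemma \ref{TR sub} applied to the subproblem $Q_t$: there is a multiplier $\lambda_t\geq 0$ with $g_t+(H_t+\lambda_t\mathbf I)s_t=0$, $H_t+\lambda_t\mathbf I\succeq 0$, and the complementarity relation $\lambda_t(\delta_t-\norm{s_t})=0$. Since $L_P=(\kappa+1)\ell>0$ and $H_{max}>0$, the right‑hand side of \eqref{lambda assumption1} is strictly positive, so $\lambda_t>0$, and complementarity immediately forces $\norm{s_t}=\delta_t$.

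For $\rho_t\geq\eta$, the core step is to show $P(x_t)-P(x_t+s_t)\geq\eta\norm{s_t}^3$, after which dividing by $\norm{s_t}^3$ (which is positive by Lemma \ref{s lower bound}) finishes the claim. I would lower bound the actual reduction using the descent inequality implied by the $L_P$‑Lipschitz continuity of $\nabla P$ (Lemma \ref{gradient of P}), namely $P(x_t)-P(x_t+s_t)\geq -\nabla P(x_t)^\top s_t-\frac{L_P}{2}\norm{s_t}^2$. Splitting $\nabla P(x_t)=g_t+(\nabla P(x_t)-g_t)$ and combining Cauchy–Schwarz with the inexactness bound $\norm{\nabla P(x_t)-g_t}\leq C_1\norm{s_t}^2$ from Lemma \ref{bound lemma} gives $-\nabla P(x_t)^\top s_t\geq -g_t^\top s_t-C_1\norm{s_t}^3$, while the stationarity equation $g_t=-(H_t+\lambda_t\mathbf I)s_t$ together with $\norm{H_t}\leq H_{max}$ yields $-g_t^\top s_t=s_t^\top H_t s_t+\lambda_t\norm{s_t}^2\geq(\lambda_t-H_{max})\norm{s_t}^2$. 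Putting these together, $P(x_t)-P(x_t+s_t)\geq(\lambda_t-H_{max}-\frac{L_P}{2})\norm{s_t}^2-C_1\norm{s_t}^3$.

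Inserting the hypothesis \eqref{lambda assumption1} finishes the proof: it gives $\lambda_t-H_{max}-\frac{L_P}{2}\geq\frac{3L_P}{2}+2(\eta+C_1)\norm{s_t}\geq 2(\eta+C_1)\norm{s_t}$, hence $P(x_t)-P(x_t+s_t)\geq 2(\eta+C_1)\norm{s_t}^3-C_1\norm{s_t}^3=(2\eta+C_1)\norm{s_t}^3\geq\eta\norm{s_t}^3$, so $\rho_t\geq\eta$. I do not anticipate a deep obstacle here; the one point that needs care is the choice of descent estimate. Using the sharper cubic expansion \eqref{P second order expansion error} in place of the first‑order descent lemma would introduce the Hessian‑model error $\norm{\nabla^2 P(x_t)-H_t}$ and a term $\frac{H_{Lip}}{6}\norm{s_t}^3$, whose coefficients are not controlled by the constants appearing in \eqref{lambda assumption1}; the plain Lipschitz‑gradient inequality is exactly the right tool, and the fact that Lemma \ref{bound lemma} bounds $\norm{\nabla P(x_t)-g_t}$ by something quadratic in $\norm{s_t}$ is precisely what lets the inexactness be absorbed into the $O(\norm{s_t}^3)$ terms.
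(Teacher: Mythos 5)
Your proof is correct, and it reaches the conclusion by a slightly different decomposition than the paper. The paper splits the actual reduction as $P(x_t)-P(x_t+s_t)=[P(x_t)-q_t(s_t)]+[q_t(s_t)-P(x_t+s_t)]$, bounding the first bracket below by $\tfrac{1}{2}\lambda_t\norm{s_t}^2$ via the exact model-decrease identity of Lemma \ref{gap between P(x_t) and q_t(s_t)} (which already encodes the stationarity equation) and the second bracket via a mean-value point $\overline{x}_t\in[x_t,x_t+s_t]$ with $\norm{\nabla P(x_t)-\nabla P(\overline{x}_t)}\leq L_P\norm{s_t}$. You bypass the model value entirely: you apply the $L_P$-descent lemma to $P$ directly and substitute $g_t=-(H_t+\lambda_t\mathbf I)s_t$ by hand. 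This retains the full $\lambda_t\norm{s_t}^2$ instead of $\tfrac{1}{2}\lambda_t\norm{s_t}^2$ and costs only $\tfrac{L_P}{2}\norm{s_t}^2$ instead of $L_P\norm{s_t}^2$, so you end up with the stronger bound $(2\eta+C_1)\norm{s_t}^3$ where the paper lands exactly on $\eta\norm{s_t}^3$; the hypothesis \eqref{lambda assumption1} is therefore not tight for your argument, but that is harmless. Both routes rest on the same three ingredients --- the optimality conditions of Lemma \ref{TR sub}, the inexactness bounds of Lemma \ref{bound lemma}, and $\norm{H_t}\leq H_{max}$ --- and your closing remark about why the cubic expansion \eqref{P second order expansion error} would be the wrong tool here (it would introduce constants not controlled by \eqref{lambda assumption1}) is accurate. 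The first assertion, that positivity of the right-hand side of \eqref{lambda assumption1} forces $\lambda_t>0$ and hence $\norm{s_t}=\delta_t$ by complementarity, is handled identically to the paper.
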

\begin{proof}
	By \eqref{lambda assumption1} we have $\lambda_t > 0$, then $\norm{s_t} = \delta_t$ which follows directly from \eqref{complementary condition} in Lemma \ref{TR sub}. Since $P(x)$ is a smooth function, there exists a point $\overline{x}_t \in [x_t,x_t + s_t]$ such that 
	\begin{equation}\label{gap between q(s_t) and P_t(x_t+s_t)}
	\begin{split}
	q_t(s_t) - P(x_t + s_t) &= P(x_t) + g_t^T s_t + \frac{1}{2} s_t^T H_t s_t - P(x_t + s_t)\\
	&=(g_t - \nabla P(\overline{x}_t))^T s_t + \frac{1}{2} s_t^T H_t s_t \\
	&\geq - \norm{g_t - \nabla P(\overline{x}_t))} \norm{s_t} - \frac{1}{2} \norm{H_t} \norm{s_t}^2.
	%&=(g_t - \nabla P(x_t) + \nabla P(x_t) - \nabla P(\overline{x}_t))^T s_t %+ \frac{1}{2} s_t^T H_t s_t \\
	%& \geq -C_1 \norm{s_t}^3 - L_P \norm{s_t}^2 - \frac{1}{2} H_{max} %\norm{s_t}^2
	\end{split}
	\end{equation}
	From Lemma \ref{bound lemma}, Lemma \ref{gap between P(x_t) and q_t(s_t)}, \eqref{gap between q(s_t) and P_t(x_t+s_t)} and \eqref{lambda assumption1}, we have
	\begin{align}
	& P(x_t) - P(x_t + s_t) \nonumber\\
	= & P(x_t) - q_t(s_t) + q_t(s_t) - P(x_t + s_t) \nonumber\\
	\geq & \frac{1}{2} \lambda_t \norm{s_t}^2 - \norm{g_t - \nabla P(x_t)} \norm{s_t} - \norm{\nabla P(x_t) - \nabla P(\overline{x}_t)} \norm{s_t} - \frac{1}{2} \norm{H_t} \norm{s_t}^2\nonumber\\
	\geq & \frac{1}{2} \lambda_t \norm{s_t}^2 - C_1 \norm{s_t}^3 - L_P \norm{s_t}^2 - \frac{1}{2} H_{max} \norm{s_t}^2 \nonumber\\
	\geq & \eta \norm{s_t}^3,\label{4:11}
	\end{align}
	which implies that $\rho_t \geq \eta$. The proof is then completed.
\end{proof}

%Next, we show that if $t \in \mathcal{C}$ for all sufficiently large $t \in \mathbb{N}$, then the sequence of trust region radius converge to zero, and the sequence of dual variables converge
%to infinity.
%\begin{lemma}\quad[\cite{TRACE}, lemma 3.9]
%	If $t \in \mathcal{C}$ for all sufficiently large $t \in \mathbb{N}$, then $\{ \delta_t \} \rightarrow 0$ and $\{ \lambda_t \} \rightarrow 
%	\infty$.
%\end{lemma}

%	We now prove that the set of accepted steps is infinite.
%	\begin{lem}[\cite{TRACE}, Lemma 3.10]\label{setA}
%		The set $\mathcal{A}$ has infinite cardinality.
%	\end{lem}
%	
%	We now prove that the set $\mathcal{A}_{\Delta}$ has finite cardinality and the norms of the trial steps have a uniform upper bound. 
%	\begin{lem}[\cite{TRACE}, Lemma 3.11]\label{s upper bound}
%		There exists a scalar constant $\Delta \in \mathbb{R}_{++}$ such that $\Delta_t = \Delta$ for all sufficiently large $t \in \mathbb{N}$, the set $\mathcal{A}_{\Delta}$ has finite cardinality, and there exists a scalar constant $s_{max} \in \mathbb{R}_{++}$, such that $\norm{s_t} \leq s_{max}$ for all $t \in \mathbb{N}$.
%	\end{lem}

The next lemma shows that the set $\mathcal{A}$ has infinite cardinality and the set $\mathcal{A}_{\Delta}$ has finite cardinality.
\begin{lemma}\quad[\cite{TRACE}, Lemma 3.10-3.11]\label{setA}
	The set $\mathcal{A}$ has infinite cardinality. There exists a scalar constant $\Delta \in \mathbb{R}_{++}$ such that $\Delta_t = \Delta$ for all sufficiently large $t \in \mathbb{N}$, the set $\mathcal{A}_{\Delta}$ has finite cardinality, and there exists a scalar constant $s_{max} \in \mathbb{R}_{++}$, such that $\norm{s_t} \leq s_{max}$ for all $t \in \mathbb{N}$.
\end{lemma}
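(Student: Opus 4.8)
The statement is the inexact counterpart of Lemmas~3.10 and~3.11 in \cite{TRACE}, so the plan is to replay those proofs with $g_t$ and $H_t$ in the roles of $\nabla P(x_t)$ and $\nabla^2 P(x_t)$, invoking Lemmas~\ref{bound lemma}, \ref{s lower bound}, \ref{gap between P(x_t) and q_t(s_t)}, \ref{radius relation} and \ref{lambda success step} in place of the exact estimates used there. The key structural fact enabling this is that Lemma~\ref{bound lemma} makes the model errors $\norm{\nabla P(x_t)-g_t}=\mathcal{O}(\norm{s_t}^2)$ and $\norm{\nabla^2 P(x_t)-H_t}=\mathcal{O}(\norm{s_t})$, which are of strictly higher order than the quantities $\lambda_t\norm{s_t}^2$ and $\eta\norm{s_t}^3$ that govern the \cite{TRACE} arguments; hence every inequality there survives with adjusted constants. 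I would carry out the proof in three stages.

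\emph{Stage 1: $\mathcal{A}$ is infinite.} Suppose not. Then by Lemma~\ref{s lower bound} the iterate only moves, and only at accepted steps, so there is $t_0$ with $x_t=x_{t_0}$ for all $t\ge t_0$ and every $t\ge t_0$ lies in $\mathcal{C}\cup\mathcal{E}$. I would import the argument of \cite[Lemma~3.10]{TRACE}: using Lemma~\ref{radius relation} and the mechanics of CONTRACT one shows that an expansion step is always followed by a non-expansion step, so after $t_0$ there are infinitely many contraction steps, along which Lemma~\ref{radius relation} gives $\lambda_{t+1}\ge\lambda_t$ and, by the $\gamma_\lambda$- and $(\underline{\sigma}\norm{g_t})^{1/2}$-updates inside CONTRACT together with the uniform bounds $\norm{s_t}\le\delta_t\le\Delta_{t_0}$, $\norm{g_t}\le g_{max}$, $\norm{H_t}\le H_{max}$, in fact $\lambda_t\to\infty$. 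Once $\lambda_t\ge 2L_P+H_{max}+2(\eta+C_1)\Delta_{t_0}$, Lemma~\ref{lambda success step} yields $\rho_t\ge\eta$ and $\norm{s_t}=\delta_t$; and since such a $t$ immediately follows a contraction step, the update $\sigma_t=\max\{\sigma_{t-1},\lambda_t/\norm{s_t}\}$ performed in Algorithm~\ref{MINIMAX-TRACE} gives $\lambda_t\le\sigma_t\norm{s_t}$, so $t\in\mathcal{A}$ — contradicting finiteness of $\mathcal{A}$. Hence $\mathcal{A}$ is infinite.

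\emph{Stage 2: $\Delta_t$ stabilizes, $\mathcal{A}_\Delta$ is finite.} On accepted steps $\rho_t\ge\eta$ and $x_{t+1}=x_t+s_t$ give $P(x_t)-P(x_{t+1})\ge\eta\norm{s_t}^3$, while $P$ is unchanged on $\mathcal{C}$ and $\mathcal{E}$ steps and bounded below by $P^*$ (Assumption~\ref{basic assumption}); summing over accepted steps gives $\sum_{t\in\mathcal{A}}\norm{s_t}^3\le(P(x_0)-P^*)/\eta<\infty$, so $\norm{s_t}\to0$ along $\mathcal{A}$, which is infinite by Stage~1. By Lemma~\ref{radius relation} the sequence $\{\Delta_t\}$ is nondecreasing with $\Delta_t\ge\Delta_0>0$, and it is modified only at accepted steps, where $\Delta_{t+1}=\gamma_E\norm{s_t}$ whenever $\gamma_E\norm{s_t}>\Delta_t$; since $\gamma_E\norm{s_t}\to0$ along $\mathcal{A}$, this happens only finitely often, so $\Delta_t=\Delta$ for some $\Delta\in\mathbb{R}_{++}$ and all sufficiently large $t$. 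Moreover $t\in\mathcal{A}_\Delta$ forces $\norm{s_t}=\Delta_t\ge\Delta_0$, which holds for only finitely many $t\in\mathcal{A}$, hence $\mathcal{A}_\Delta$ is finite. Finally, the subproblem constraint gives $\norm{s_t}\le\delta_t$, Lemma~\ref{radius relation} gives $\delta_t\le\Delta_t$, and $\{\Delta_t\}$ is nondecreasing with limit $\Delta$, so $\norm{s_t}\le\Delta=:s_{max}$ for all $t\in\mathbb{N}$.

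The main obstacle is Stage~1: faithfully transferring \cite{TRACE}'s exact monotonicity and acceptance chain to the present inexact setting, i.e.\ verifying that the $\mathcal{O}(\norm{s_t}^2)$ gradient and $\mathcal{O}(\norm{s_t})$ Hessian discrepancies between the frozen-$x$ subproblems $Q_t$ and $Q_{t+1}$ do not destroy the ``no two consecutive expansions'' property nor the divergence $\lambda_t\to\infty$ along contractions. This is precisely where Lemma~\ref{bound lemma}, together with the boundedness of $\norm{g_t}$ and $\norm{H_t}$, is essential, since it supplies discrepancy bounds of exactly the order needed to absorb them into the dominant $\lambda_t\norm{s_t}^2$ and $\eta\norm{s_t}^3$ terms; everything else is a routine, if lengthy, bookkeeping following \cite{TRACE}.
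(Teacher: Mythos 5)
The paper offers no proof of this lemma at all: it simply imports Lemmas 3.10--3.11 of \cite{TRACE}, implicitly asserting that their arguments survive the replacement of $\nabla P(x_t)$ and $\nabla^2 P(x_t)$ by the inexact $g_t$ and $H_t$. Your proposal makes exactly that transfer explicit --- using Lemma \ref{bound lemma} to absorb the $\mathcal{O}(\norm{s_t}^2)$ and $\mathcal{O}(\norm{s_t})$ discrepancies into the dominant $\eta\norm{s_t}^3$ and $\lambda_t\norm{s_t}^2$ terms --- so it is correct and follows the same route the paper intends.
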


Next result ensures a lower bound on the trust region radius if the gradient sequence is asymptotically bounded away from zero.
\begin{lemma}\quad\label{radius lower bound}
	If there exists a scalar constant $g_{min} > 0$ such that 
	\begin{equation}
	\norm{g_t} \geq g_{min}  \text{ for all } t \in \mathbb{N},
	\end{equation}
	then there exists a scalar constant $\delta_{min} > 0$ such that $\delta_t \geq \delta_{min}$ for all $t \in \mathbb{N}$.
\end{lemma}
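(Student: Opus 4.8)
The plan is to exploit the structure of the radius updates. By Lemma \ref{radius relation}, whenever $t \notin \mathcal{C}$ one has $\delta_{t+1} \geq \delta_t$, so the radius can only shrink on contraction steps. Consequently it suffices to exhibit a single positive constant $\delta_C$ with $\delta_{t+1} \geq \delta_C$ for every $t \in \mathcal{C}$; a straightforward induction on $t$ then gives $\delta_t \geq \min\{\delta_0,\delta_C\} =: \delta_{min}$ for all $t \in \mathbb{N}$, which is the assertion. Thus the whole argument reduces to lower bounding the output of the CONTRACT subroutine.

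To this end I would go through the branches of CONTRACT. Each branch returns, or bounds below by, the norm of a vector of the form $s(\lambda) = -(H_t + \lambda\mathbf I)^{-1}g_t$, namely the solution of $Q_t(\lambda)$, for some $\lambda > \lambda_t$. Since $H_t + \lambda_t\mathbf I$ is positive semidefinite by Lemma \ref{TR sub}, the matrix $H_t + \lambda\mathbf I$ is positive definite and $s(\lambda)$ is well defined; from $g_t = -(H_t + \lambda\mathbf I)s(\lambda)$, the hypothesis $\norm{g_t} \geq g_{min}$, and the uniform bound $\norm{H_t} \leq H_{max}$ established above, one gets $\norm{s(\lambda)} \geq \norm{g_t}/(\norm{H_t}+\lambda) \geq g_{min}/(H_{max}+\lambda)$. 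It therefore only remains to bound the relevant $\lambda$ above by a constant in each branch, and for that I will use the a priori bounds $\norm{s_t} \leq s_{max}$ (Lemma \ref{setA}) and $\norm{g_t} \leq g_{max}$, together with the monotonicity of $\lambda \mapsto \norm{s(\lambda)}$.

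Concretely, in the branch $\lambda_t < \underline{\sigma}\norm{s_t}$ the regularization used is at most $\hat\lambda = \lambda_t + (\underline{\sigma}\norm{g_t})^{1/2} \leq \underline{\sigma}s_{max} + (\underline{\sigma}g_{max})^{1/2}$, and since $\norm{s(\lambda)}$ is nonincreasing in $\lambda$ one has $\norm{s^2} \geq \norm{s^1} = \norm{s(\hat\lambda)}$, so in either sub-branch $\delta_{t+1} \geq g_{min}/(H_{max} + \underline{\sigma}s_{max} + (\underline{\sigma}g_{max})^{1/2})$. In the branch $\lambda_t \geq \underline{\sigma}\norm{s_t}$ one has $\delta_{t+1} \geq \norm{s^3} = \norm{s(\gamma_\lambda\lambda_t)}$, so here I need a uniform bound on $\lambda_t$ itself; this is supplied by Lemma \ref{lambda success step}: since $t \in \mathcal{C}$ forces $\rho_t < \eta$, the hypothesis of that lemma must fail, i.e. $\lambda_t < 2L_P + H_{max} + 2(\eta + C_1)\norm{s_t} \leq 2L_P + H_{max} + 2(\eta + C_1)s_{max}$, whence $\delta_{t+1} \geq g_{min}/(H_{max} + \gamma_\lambda(2L_P + H_{max} + 2(\eta+C_1)s_{max}))$. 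Taking $\delta_C$ to be the minimum of these two explicit positive constants finishes the proof.

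I expect the only real obstacle to be the bookkeeping inside CONTRACT. In the branch $\lambda_t \geq \underline{\sigma}\norm{s_t}$ it is tempting to write $\delta_{t+1} \geq \gamma_C\norm{s_t} = \gamma_C\delta_t$, but this is circular; one must instead keep the $\norm{s^3}$ term and invoke Lemma \ref{lambda success step} to control $\lambda_t$ along contraction steps. Some care is also needed to check that $H_t + \lambda\mathbf I$ is positive definite at each call, so that $s(\lambda)$ and the bound $\norm{s(\lambda)} \geq \norm{g_t}/(\norm{H_t}+\lambda)$ are legitimate, and to use $g_t \neq 0$, which holds by Assumption \ref{bound assumption}.
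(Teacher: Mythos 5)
Your proof is correct, and its skeleton---reduce to lower-bounding $\delta_{t+1}$ on contraction iterations via Lemma \ref{radius relation}, then walk through the branches of Algorithm \ref{CONTRACT}---is the same as the paper's. For the branch $\lambda_t < \underline{\sigma}\norm{s_t}$ the two arguments essentially coincide: your operator-norm bound $\norm{s(\lambda)} \geq \norm{g_t}/(\norm{H_t}+\lambda)$ is exactly the paper's eigendecomposition estimate for Step 9, and it also subsumes Step 6 (where the paper instead uses the tested condition $\lambda/\norm{s^1}\le\overline{\sigma}$ to get $\norm{s^1}\ge(\underline{\sigma}g_{min})^{1/2}/\overline{\sigma}$). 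The genuine difference is in the branch $\lambda_t \ge \underline{\sigma}\norm{s_t}$: the paper keeps the $\gamma_C\norm{s_t}=\gamma_C\delta_t$ term of the max and therefore must first establish, by a separate sufficient-decrease computation, a threshold $\delta_{thresh}>0$ with $\delta_t\ge\delta_{thresh}$ for all $t\in\mathcal{C}$; you instead keep the $\norm{s^3}$ term and control $\gamma_\lambda\lambda_t$ via the contrapositive of Lemma \ref{lambda success step}, which gives $\lambda_t < 2L_P + H_{max} + 2(\eta+C_1)s_{max}$ on contraction steps. The two devices encode the same underlying fact (a too-large $\lambda_t$, equivalently a too-small $\delta_t$, would force $\rho_t\ge\eta$, contradicting $t\in\mathcal{C}$), but your packaging lets you skip the $\delta_{thresh}$ step entirely and yields fully explicit constants, while the paper's version stays closer to the original TRACE analysis. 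Your side remarks---positive definiteness of $H_t+\lambda\mathbf I$ for $\lambda>\lambda_t$, monotonicity of $\lambda\mapsto\norm{s(\lambda)}$, the need for $g_t\neq 0$, and the circularity of the naive $\gamma_C\delta_t$ bound in the absence of a threshold---are exactly the right points to guard, and you handle them correctly.
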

\begin{proof}
	If $|\mathcal{C}| < \infty$, the result holds by Lemma \ref{radius relation}. Then, we assume that $|\mathcal{C}| = \infty$.
	
	Firstly, we prove that there exits $\delta_{thresh} > 0$ such that, if $t \in \mathcal{C}$, then $\delta_t \geq \delta_{thresh}$. Indeed, similar to the proof of \eqref{4:11} in Lemma \ref{lambda success step} and by Lemma \ref{gap between P(x_t) and q_t(s_t)}, we have for all $t \in \mathbb{N}$
	that
	\begin{equation}\label{P descent}
	\begin{split}
	P(x_t) - P(x_t + s_t) &  = P(x_t) - q_t(s_t) + q_t(s_t) - P(x_t + s_t) \\
	&\geq \frac{1}{2} \norm{g_t} \min \left\{ \delta_t, \frac{\norm{g_t}}{\norm{H_t}} \right\} - C_1 \norm{s_t}^3 - L_P \norm{s_t}^2 - \frac{1}{2} H_{max} \norm{s_t}^2.
	\end{split} 
	\end{equation}
 Then, if $\delta_t \leq \delta_{thresh}= \min \{ \frac{g_{min}}{H_{max}}, \frac{L_p + \frac{1}{2}H_{max} + \sqrt{(L_p + \frac{1}{2}H_{max})^2 + 2g_{min}(C_1+\eta)}}{g_{min}} \}$ we have 
	\[
		P(x_t) - P(x_t + s_t) \geq \frac{1}{2} g_{min} \delta_t - C_1 \delta_t^3 - (L_P + \frac{1}{2} H_{max}) \delta_t^2 \geq \eta \delta_t^3 \geq \eta \norm{s_t}^3,
	\]
	which implies that $\rho_t \geq \eta$ that conflicts $t \in \mathcal{C}$. Hence, $\delta_t\geq \delta_{thresh}$.
	
	Suppose that $t \in \mathcal{C}$, we consider the update for $\delta_{t+1}$ in Step 9 of Algorithm \ref{TRACE}, i.e., Algorithm \ref{CONTRACT}. If Step 6 of Algorithm \ref{CONTRACT} is reached, then it follows that
	\[
	\delta_{t+1} = \norm{s^1} \geq \frac{\lambda}{\overline{\sigma}} = \frac{\lambda_t + (\underline{\sigma} \norm{g_t})^{1/2}}{\overline{\sigma}} \geq \frac{(\underline{\sigma} g_{min})^{1/2}}{\overline{\sigma}}.
	\]
	Otherwise, if Step 15 is reached, then it follows that
	\[
	\delta_{t+1} \geq \gamma_C \norm{s_t} = \gamma_C \delta_t \geq \gamma_C \delta_{thresh}.
	\]
    where the equality holds since that $\lambda_t > 0$ and \eqref{complementary condition} implies that $\norm{s_t} = \delta_t$. It remains to consider the value obtained in Step 9 is reached. %For this case, let us use $(\lambda,s)$ to denote the values obtained in Step 8 while we use $\hat{s}$ to denote the solution that was obtained in Step 4 % 
	Since $\lambda \in (\lambda_t,\hat{\lambda})$, we have that $\norm{s^2} \geq \norm{s^1}$. Now let $H_t = V_t \Xi_t V_t^T$ where $V_t$ is an orthonormal matrix of eigenvectors and $\Xi_t = diag(\xi_{t,1},\dots,\xi_{t,n})$ with $\xi_{t,1}\leq \dots \leq \xi_{t,n}$ is a diagonal matrix of eigenvalues of $H_t$. Since $\hat{\lambda} > \lambda_t$, the matrix $H_t + \hat{\lambda} I$ is invertible and 
	\[
	\norm{s^1}^2 = \norm{V_t(\Xi_t + \hat{\lambda} I)^{-1} V_t^T g_t}^2 = g_t^T V_t (\Xi_t + \hat{\lambda} I)^{-2} V_t^T g_t.
	\]
	The orthonormality of $V_t$, Step 2, and Lemma \ref{setA} imply that
	\begin{equation}\nonumber
	\begin{split}
	\frac{\norm{s^1}^2}{\norm{g_t}^2} &= \frac{g_t^T V_t (\Xi_t + \hat{\lambda} I)^{-2} V_t^T g_t}{\norm{V_t^T g_t}^2} \\
	&\geq (\xi_{t,n} + \lambda_t + (\underline{\sigma} \norm{g_t})^{1/2})^{-2} \\
	&\geq (\xi_{t,n} + \underline{\sigma} \norm{s_t} + (\underline{\sigma} \norm{g_t})^{1/2})^{-2} \\
	&\geq (H_{max} + \underline{\sigma}s_{max} + (\underline{\sigma} g_{max})^{1/2}))^{-2}.
	\end{split}
	\end{equation}
	Hence, there exists a constant $s_{min}$ such that, for all such $t \in \mathcal{C}$, we have $\delta_{t+1} = \norm{s^2} \geq \norm{s^1} \geq s_{min}$ by $\norm{g_t} \geq g_{min}$. Combining all the three cases in the above analysis, we have shown that, for all $t \in \mathcal{C}$, we have
	\[
	\delta_{t+1} \geq \min \left\{
	\frac{(\underline{\sigma} g_{min})^{1/2}}{\overline{\sigma}},
	\gamma_C \delta_{thresh},
	s_{min} 
	\right\} > 0.
	\]
	The proof is then completed by combining the above proved results that $\delta_t\geq \delta_{thresh}$ when $t\in \mathcal{C}$, $\delta_{t+1}$ is lower bounded for $t \in \mathcal{C}$, and by Lemma \ref{radius relation} which ensures that $\delta_{t+1} \geq \delta_t$ for all $t \in \mathcal{A} \cup \mathcal{E}$.
\end{proof}

%Now we can show that the sequence of gradients $\{g_t\}$ has a limit point at zero.
%\begin{lemma}\quad[\cite{TRACE}, Lemma 3.13]\label{gradient liminf}
%	There holds
%	\[
%	\mathop{\lim\inf}\limits_{t \in \mathbb{N},t \rightarrow \infty} \norm{g_t} = 0
%	\]
%\end{lemma}

Next we prove the main global convergence result.
\begin{theorem}\quad \label{gradient convergence theorem}
	Under Assumption \ref{basic assumption} and Assumption \ref{bound assumption}, it holds that
	\begin{equation}\label{gradient convergence}
	\lim \limits_{t \in \mathbb{N},t \rightarrow \infty} \norm{\nabla P(x_t)} = 0.
	\end{equation}
\end{theorem}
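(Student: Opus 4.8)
The plan is to adapt the global convergence argument for TRACE (\cite{TRACE}) to the present inexact setting, keeping track of the extra terms produced by the model errors $g_t-\nabla P(x_t)$ and $H_t-\nabla^2 P(x_t)$. First I would record the basic monotonicity: on $t\in\mathcal{A}$ the defining inequality $\rho_t\geq\eta$ together with \eqref{rho} gives $P(x_t)-P(x_{t+1})=\rho_t\norm{s_t}^3\geq\eta\norm{s_t}^3$, while on $t\in\mathcal{C}\cup\mathcal{E}$ we have $x_{t+1}=x_t$. Hence $\{P(x_t)\}$ is non-increasing and, being bounded below by $P^*$, convergent; telescoping over the accepted steps yields $\sum_{t\in\mathcal{A}}\norm{s_t}^3\leq(P(x_0)-P^*)/\eta<\infty$, so $\norm{s_t}\to 0$ as $t\to\infty$ with $t\in\mathcal{A}$. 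Since $x_{t+1}=x_t$ for every $t\notin\mathcal{A}$ and $\mathcal{A}$ is infinite (Lemma \ref{setA}), for all sufficiently large $t$ the iterate $x_t$ equals $x_{j(t)+1}$ with $j(t)$ the largest accepted index less than $t$, and $j(t)\to\infty$; therefore it suffices to prove $\norm{\nabla P(x_{t+1})}\to 0$ along $t\in\mathcal{A}$.

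For $t\in\mathcal{A}$, writing $x_{t+1}=x_t+s_t$ and decomposing $\nabla P(x_{t+1})$ as the sum of $\nabla P(x_{t+1})-\nabla P(x_t)-\nabla^2 P(x_t)s_t$, of $g_t+H_t s_t$, of $\nabla P(x_t)-g_t$, and of $(\nabla^2 P(x_t)-H_t)s_t$, I would bound the first term by $\frac{H_{Lip}}{2}\norm{s_t}^2$ via \eqref{gradient of P first order expansion error}, use the first-order optimality of $s_t$ for $Q_t$ (Lemma \ref{TR sub}), namely $g_t+(H_t+\lambda_t I)s_t=0$, to get $\norm{g_t+H_t s_t}=\lambda_t\norm{s_t}$, and bound the last two terms by $C_1\norm{s_t}^2$ and $C_2\norm{s_t}^2$ using Lemma \ref{bound lemma}. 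This gives $\norm{\nabla P(x_{t+1})}\leq\lambda_t\norm{s_t}+(\frac{H_{Lip}}{2}+C_1+C_2)\norm{s_t}^2$, so by the previous paragraph everything reduces to proving $\lambda_t\norm{s_t}\to 0$ along $t\in\mathcal{A}$. By Lemma \ref{setA} the set $\mathcal{A}_{\Delta}$ is finite, so all but finitely many $t\in\mathcal{A}$ lie in $\mathcal{A}_{\sigma}$, where $\lambda_t\leq\sigma_t\norm{s_t}$ by definition and hence $\lambda_t\norm{s_t}\leq\sigma_t\norm{s_t}^2$. Consequently the theorem follows once one shows that $\{\sigma_t\}$ is bounded above.

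Proving $\sup_t\sigma_t<\infty$ is the step I expect to be the \emph{main obstacle}. The sequence $\{\sigma_t\}$ is non-decreasing, so only its increments must be controlled. Increments on accepted steps are benign: on $\mathcal{A}_{\sigma}$ the update $\sigma_{t+1}=\max\{\sigma_t,\lambda_t/\norm{s_t}\}$ changes nothing, and on the finite set $\mathcal{A}_{\Delta}$ one has $\norm{s_t}=\Delta_t\geq\Delta_0$ together with the crude bound $\lambda_t\norm{s_t}\leq 2g_{max}+H_{max}\norm{s_t}$ (which follows from $P(x_t)-q_t(s_t)\geq\frac{1}{2}\lambda_t\norm{s_t}^2$ in Lemma \ref{gap between P(x_t) and q_t(s_t)} and the bounds on $\norm{g_t},\norm{H_t}$), so only finitely many, uniformly bounded increments occur. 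The delicate case is a contraction step $t\in\mathcal{C}$, where Algorithm \ref{MINIMAX-TRACE} sets $\sigma_{t+1}=\max\{\sigma_t,\lambda_{t+1}/\norm{s_{t+1}}\}$: there I would use that $\delta_{t+1}$ is produced by Algorithm \ref{CONTRACT} as the norm of an (essentially unconstrained) minimizer of a regularized subproblem whose regularization-to-step-norm ratio is kept within $(0,\overline{\sigma}]$, combined with $\norm{s_t}\leq s_{max}$, the bounds on $\norm{g_t},\norm{H_t}$, and Lemma \ref{bound lemma}, to argue that the next subproblem $Q_{t+1}$ — which differs from the one solved inside CONTRACT only through the controlled perturbations $g_{t+1}-g_t$ and $H_{t+1}-H_t$ — still yields $\lambda_{t+1}/\norm{s_{t+1}}$ bounded, in particular handling the branch $\delta_{t+1}=\max\{\norm{s^3},\gamma_C\norm{s_t}\}$. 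This $\sigma$-update on contractions has no counterpart in \cite{TRACE}, which is why the real work lies here.

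As a cross-check and an alternative partial route I would also note the following contradiction argument: if $\norm{g_t}\geq g_{min}>0$ for all $t$, then Lemma \ref{radius lower bound} gives $\delta_t\geq\delta_{min}$, so Lemma \ref{s lower bound} gives $\norm{s_t}\geq\min\{\delta_{min},g_{min}/H_{max}\}>0$ for all $t$, contradicting $\norm{s_t}\to 0$ along $\mathcal{A}$; since Assumption \ref{bound assumption} forces $g_t\neq 0$, this already yields $\liminf_{t\to\infty}\norm{g_t}=0$, and hence $\liminf_{t\to\infty}\norm{\nabla P(x_t)}=0$. Upgrading this $\liminf$ to the full limit in \eqref{gradient convergence} again needs the radius and $\sigma$ bookkeeping above together with the fact that $x_t$ is constant off $\mathcal{A}$, so the two approaches converge on the same technical heart.
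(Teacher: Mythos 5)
Your argument is correct, but it follows a genuinely different route from the paper's. The paper proves the theorem in the classical trust-region style imported from \cite{TRACE}: first $\liminf_{t\to\infty}\norm{g_t}=0$ (via Lemma \ref{radius lower bound}, exactly your ``cross-check'' paragraph), then an upgrade to $\lim_{t\to\infty}\norm{g_t}=0$ by the $\eps$/$2\eps$ subsequence contradiction argument of Theorem 3.14 in \cite{TRACE} (choosing indices $t_i\leq t<l_i$ with $\norm{g_t}\geq\eps$ and $\norm{g_{l_i}}<\eps$, showing $\delta_t\to0$ on that index set, hence $\norm{x_{t_i}-x_{l_i}}\to0$ and $\norm{g_{t_i}-g_{l_i}}\to0$, a contradiction), and finally the transfer $\norm{\nabla P(x_t)}\leq C_1\norm{s_t}^2+\norm{g_t}$ plus constancy of $x_t$ off $\mathcal{A}$. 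Your main route instead goes through the stationarity residual bound $\norm{\nabla P(x_{t+1})}\leq\lambda_t\norm{s_t}+(\tfrac{H_{Lip}}{2}+C_1+C_2)\norm{s_t}^2$, the summability of $\norm{s_t}^3$ over $\mathcal{A}$, finiteness of $\mathcal{A}_\Delta$, and the bound $\lambda_t\leq\sigma_t\norm{s_t}$ on $\mathcal{A}_\sigma$ — this is essentially the content of the paper's later Lemma \ref{s gradient}, front-loaded to prove global convergence directly, and it buys a shorter proof that bypasses the subsequence argument entirely. The price is that it leans on $\sup_t\sigma_t\leq\sigma_{max}$, which you rightly identify as the crux; note, however, that the paper itself simply imports this as Lemma 3.18 of \cite{TRACE} (inside the proof of Lemma \ref{s gradient}), and that lemma is established in \cite{TRACE} independently of global convergence, from the acceptance criterion (here Lemma \ref{lambda success step 2}) and the CONTRACT bookkeeping. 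One small correction: the $\sigma$-update after a contraction, $\sigma_{t+1}=\max\{\sigma_t,\lambda_{t+1}/\norm{s_{t+1}}\}$, is not new to this paper — TRACE's own algorithm contains the same step and its Lemma 3.18 already accounts for it — so the ``main obstacle'' you flag reduces to re-verifying that argument under the inexact models of Lemma \ref{bound lemma}, which is no harder than the other TRACE lemmas the paper transplants without proof.
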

%\begin{equation}\label{gradient convergence}
%\lim \limits_{t \in \mathbb{N},t \rightarrow \infty} \norm{g_t} = 0
%\end{equation}
\begin{proof}
	By Lemma \ref{radius lower bound}, similar to the proof of Lemma 3.13 in \cite{TRACE}, we first have that
	\begin{equation} \label{approx gradient liminf}
		\mathop{\lim\inf}\limits_{t \in \mathbb{N},t \rightarrow \infty} \norm{g_t} = 0.
	\end{equation} 
	Next, similar to Theorem 3.14 in \cite{TRACE}, we prove that
	\begin{equation}\label{approx gradient convergence}
	\lim \limits_{t \in \mathbb{N},t \rightarrow \infty} \norm{g_t} = 0.
	\end{equation} 
Suppose the contrary that \eqref{approx gradient convergence} does not hold. Combined with Lemma \ref{setA}, it implies that there exists an infinite subsequence $\{t_i\} \subseteq \mathcal{A}$ (indexed over $i \in \mathbb{N}$) such that, for some $\eps > 0$ and all $i \in \mathbb{N}$, we have $\norm{g_{t_i}} \geq 2\eps > 0$. Additionally,  Lemma \ref{setA} and \eqref{approx gradient liminf} imply that there exists an infinite subsequence $\{l_i\} \subseteq \mathcal{A}$ (indexed over $i \in \mathbb{N}$) such that, for all $i \in \mathbb{N}$ and $t \in \mathcal{A}$ with $t_i \leq t < l_i$, we have 
	\begin{equation}\label{gradient diff}
	\norm{g_t} \geq \eps \text{ and } \norm{g_{l_i}} < \eps.
	\end{equation}
	We now restrict our attention to indices in the infinite index set
	\[
	\mathcal{K} := \{t \in \mathcal{A}: t_i \leq t < l_i \text{ for some } i \in \mathbb{N}\}.
	\]
	Observe from \eqref{gradient diff} that, for all $t \in \mathcal{K}$, we have $\norm{g_t} \geq \eps$. Hence, by Lemma \ref{s lower bound}, we have for all $t \in \mathcal{K} \subseteq \mathcal{A}$ that 
	\begin{equation}\label{f desc}
	P(x_t) - P(x_{t+1}) \geq \eta \norm{s_t}^3 \geq \eta 
	\left(
	\min \left\{
	\delta_t, \frac{\eps}{H_{max}}
	\right\}
	\right)^3.
	\end{equation}
	Since $\{P(x_t)\}$ is monotonically decreasing and bounded below, we know that $P(x_t) \rightarrow \underline{P}$ for some $\underline{P} \in \mathbb{R}$, which when combined with \eqref{f desc} shows that
	\begin{equation}
	\lim\limits_{t\in\mathcal{K},t\rightarrow\infty} \delta_t = 0.
	\end{equation}
	Using this fact and \eqref{P descent}, we have for all sufficiently large $t \in \mathcal{K}$ that
	\begin{equation}\nonumber
	\begin{split}
	P(x_t) - P(x_t + s_t) &\geq \frac{1}{2} \norm{g_t} \min \left\{ \delta_t, \frac{\norm{g_t}}{\norm{H_t}} \right\} - C_1 \norm{s_t}^3 - (L_P + \frac{1}{2} H_{max}) \norm{s_t}^2 \\
	& \geq \frac{1}{2} \eps \min \left\{ \delta_t, \frac{\eps}{H_{max}} \right\} - C_1 \delta_t^3 - (L_P + \frac{1}{2} H_{max}) \delta_t^2 \\
	& \geq \frac{1}{2} \eps \delta_t - C_1 \delta_t^3 - (L_P + \frac{1}{2} H_{max}) \delta_t^2 \\
	& \geq \frac{\eps \delta_t}{4}.
	\end{split}
	\end{equation}
	Consequently, for all sufficiently large $i \in \mathbb{N}$, we have
	\begin{equation}\nonumber
	\begin{split}
	\norm{x_{t_i} - x_{l_i}} &\leq \sum_{t \in \mathcal{K},t=t_i}^{l_i - 1} \norm{x_t - x_{t+1}} \\
	&\leq \sum_{t \in \mathcal{K},t=t_i}^{l_i - 1} \delta_t 
	\leq \sum_{t \in \mathcal{K},t=t_i}^{l_i - 1} \frac{4}{\eps} (P(x_t) - P(x_{t+1})) 
	= \frac{4}{\eps} (P_{t_i} - P_{l_i}).
	\end{split}
	\end{equation}
	Since ${P_{t_i} - P_{l_i}} \rightarrow 0$, this implies that ${\norm{x_{t_i} - x_{l_i}}} \rightarrow 0$.
	
	For all $t \in \mathcal{A}$ we have $\rho_t \geq \eta$, which implies that $P(x_t) - P(x_{t+1}) \geq \eta \norm{s_t}^3$. Since $P(x)$ is bounded below on $\mathbb{R}^n$ and Lemma \ref{setA} ensures that $|\mathcal{A}| = \infty$, this implies that $\{s_t\}_{t\in \mathcal{A}} \rightarrow 0$.
	
	Using the fact that $\{t_i\}, \{l_i\} \subseteq\mathcal{A}$ and Lemma \ref{bound lemma}, we have
	\begin{equation}\nonumber
	\begin{split}
	{\norm{g_{t_i} - g_{l_i}}} &\leq \norm{g_{t_i} - \nabla P(x_{t_i})} + 
	\norm{\nabla P(x_{t_i}) - \nabla P(x_{l_i})} +
	\norm{\nabla P(x_{l_i}) - g_{l_i}} \\
	& \leq C_1 \norm{s_{t_i}}^2 + L_P \norm{x_{t_i} - x_{l_i}} + C_1 \norm{s_{l_i}}^2 \rightarrow 0.
	\end{split}
	\end{equation}
	Howerver, this is a contradiction since, for any $i \in \mathbb{N}$, we have $\norm{g_{t_i} - g_{l_i}} \geq \eps$ by the definitions of $\{t_i\}$ and $\{l_i\}$. Overall, we conclude that our initial supposition must be false, implying that \eqref{approx gradient convergence} holds.
	
	For all $t\in \mathcal{A}$, we have
	$\norm{\nabla P(x_t)} \leq \norm{\nabla P(x_t) - g_t} + \norm{g_t} \leq C_1 \norm{s_t}^2 + \norm{g_t} \rightarrow 0$, i.e., $\lim\limits_{t\in \mathcal{A}, t\rightarrow \infty} \norm{\nabla P(x_t)} = 0$. Then we prove that \eqref{gradient convergence} holds. Suppose \eqref{gradient convergence} does not hold. Then there exists an $\eps > 0$, such that for every $N > 0$, there exists a $t > N$ such that $\norm{\nabla P(x_t)} \geq \eps$. If $t \notin \mathcal{A}$, there exists $n > t$ such that $x_n \in \mathcal{A}$ and $x_n = x_t$ because $x_t$ is fixed when $x_t \notin \mathcal{A}$. Hence, whether $x_t \in \mathcal{A}$, there always exists $n \geq t$ such that $x_n \in \mathcal{A}$ such that $\norm{\nabla P(x_n)} \geq \eps$. However, this is a contradiction since  $\lim\limits_{t\in \mathcal{A}, t\rightarrow \infty} \norm{\nabla P(x_t)} = 0$. The proof is then completed.
\end{proof}

Now we have proved global convergence result of Algorithm \ref{MINIMAX-TRACE}. Then we analyze the iteration complexity of Algorithm \ref{MINIMAX-TRACE}. We first provide a refinement of Lemma \ref{lambda success step}.
\begin{lemma}\quad\label{lambda success step 2}
	For any $t \in \mathbb{N}$, if the trial step $s_t$ and dual variable $\lambda_t$ satisfy 
	\begin{equation}\label{lambda assumption2}
	\lambda_t \geq 2 (C_1 + \frac{1}{2}C_2 + \frac{1}{2}H_{Lip} + \eta)
	\norm{s_t}. 
	\end{equation}
	then $\norm{s_t} = \delta_t$ and $\rho_t \geq \eta$.
\end{lemma}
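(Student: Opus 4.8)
Mirroring the proof of Lemma \ref{lambda success step}, the plan is to replace the mean–value estimate of $q_t(s_t)-P(x_t+s_t)$ — which was responsible for the $2L_P+H_{max}$ term in \eqref{lambda assumption1} — by the sharper second–order Taylor bound \eqref{P second order expansion error} of Lemma \ref{lem:2.3}, whose remainder is \emph{cubic} in $\norm{s_t}$. Since all of the resulting error terms then scale like $\norm{s_t}^3$, they can be absorbed into the quadratic quantity $\frac12\lambda_t\norm{s_t}^2$ supplied by Lemma \ref{gap between P(x_t) and q_t(s_t)} as soon as $\lambda_t$ exceeds a fixed multiple of $\norm{s_t}$. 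First I would dispose of the claim $\norm{s_t}=\delta_t$: by Lemma \ref{s lower bound} we have $\norm{s_t}>0$, so the right–hand side of \eqref{lambda assumption2} is strictly positive, hence $\lambda_t>0$, and the complementarity condition \eqref{complementary condition} of Lemma \ref{TR sub} applied to $Q_t$ forces $\norm{s_t}=\delta_t$.

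It then remains to show $\rho_t\geq\eta$. Applying \eqref{P second order expansion error} with $x=x_t$, $y=x_t+s_t$, and splitting $\nabla P(x_t)=g_t+(\nabla P(x_t)-g_t)$, $\nabla^2 P(x_t)=H_t+(\nabla^2 P(x_t)-H_t)$, one obtains
\begin{equation}\nonumber
P(x_t)-P(x_t+s_t)\ \geq\ \bigl(P(x_t)-q_t(s_t)\bigr)-\norm{\nabla P(x_t)-g_t}\,\norm{s_t}-\frac12\norm{\nabla^2 P(x_t)-H_t}\,\norm{s_t}^2-\frac{H_{Lip}}{6}\norm{s_t}^3 .
\end{equation}
By Lemma \ref{gap between P(x_t) and q_t(s_t)} together with the second optimality condition \eqref{second opt condition} for $Q_t$ (i.e. $H_t+\lambda_t\mathbf I\succeq 0$), $P(x_t)-q_t(s_t)=\frac12 s_t^\top(H_t+\lambda_t\mathbf I)s_t+\frac12\lambda_t\norm{s_t}^2\geq\frac12\lambda_t\norm{s_t}^2$, while Lemma \ref{bound lemma} gives $\norm{\nabla P(x_t)-g_t}\leq C_1\norm{s_t}^2$ and $\norm{\nabla^2 P(x_t)-H_t}\leq C_2\norm{s_t}$. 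Substituting these estimates yields
\begin{equation}\nonumber
P(x_t)-P(x_t+s_t)\ \geq\ \frac12\lambda_t\norm{s_t}^2-\Bigl(C_1+\frac12 C_2+\frac{H_{Lip}}{6}\Bigr)\norm{s_t}^3 .
\end{equation}

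Finally, invoking the hypothesis \eqref{lambda assumption2} and $\frac12 H_{Lip}>\frac16 H_{Lip}$,
\begin{equation}\nonumber
\frac12\lambda_t\norm{s_t}^2\ \geq\ \Bigl(C_1+\frac12 C_2+\frac12 H_{Lip}+\eta\Bigr)\norm{s_t}^3\ \geq\ \Bigl(C_1+\frac12 C_2+\frac{H_{Lip}}{6}+\eta\Bigr)\norm{s_t}^3 ,
\end{equation}
so $P(x_t)-P(x_t+s_t)\geq\eta\norm{s_t}^3$, which is exactly $\rho_t\geq\eta$ by the definition \eqref{rho} (well defined since $\norm{s_t}>0$). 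I do not anticipate a genuine obstacle here; the one point requiring care is to use the Lipschitz–Hessian expansion \eqref{P second order expansion error} with its cubic remainder in place of the cruder $\frac12\norm{H_t}\norm{s_t}^2$ bound used in Lemma \ref{lambda success step}, and to note that the stated hypothesis in fact leaves slack (the coefficient $\frac12 H_{Lip}$ could be weakened to $\frac16 H_{Lip}$).
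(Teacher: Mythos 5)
Your proof is correct and follows essentially the same route as the paper's: decompose $P(x_t)-P(x_t+s_t)$ into the model decrease from Lemma \ref{gap between P(x_t) and q_t(s_t)} minus the model error, bound the error via Lemma \ref{bound lemma}, and absorb all cubic terms into $\frac{1}{2}\lambda_t\norm{s_t}^2$ using \eqref{lambda assumption2} and complementarity. The only difference is that you bound the Taylor remainder by $\frac{H_{Lip}}{6}\norm{s_t}^3$ via \eqref{P second order expansion error}, whereas the paper evaluates the Hessian at an intermediate point $\overline{x}_t$ and uses its Lipschitz continuity to get $\frac{H_{Lip}}{2}\norm{s_t}^3$; your bound is slightly sharper and, as you observe, shows the coefficient $\frac{1}{2}H_{Lip}$ in the hypothesis has slack.
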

\begin{proof}\label{ass lambda2}
	By \eqref{lambda assumption2} we have $\lambda_t > 0$, then $\norm{s_t} = \delta_t$ which follows directly from \eqref{complementary condition} in Lemma \ref{TR sub}. Since $P(x)$ is a smooth function, there exists a point $\overline{x}_t \in [x_t,x_t + s_t]$ such that 
	\begin{equation}\label{gap between q(s_t) and P_t(x_t+s_t) 2}
	\begin{split}
	q_t(s_t) - P(x_t + s_t) &= P(x_t) + g_t^T s_t + \frac{1}{2} s_t^T H_t s_t - P(x_t + s_t) \\
	&= -\nabla P(x_t)^T s_t - \frac{1}{2} s_t^T \nabla^2 P(\overline{x}_t) s_t + g_t^T s_t + \frac{1}{2} s_t^T H_t s_t \\
	&\geq -\norm{g_t - \nabla P(x_t)} \norm{s_t} - \frac{1}{2} \norm{\nabla^2 P(\overline{x}_t) - \nabla^2 P(x_t)} \norm{s_t}^2 - \frac{1}{2} \norm{\nabla^2 P(x_t) - H_t} \norm{s_t}^2 \\
	&\geq -\norm{g_t - \nabla P(x_t)} \norm{s_t} - \frac{1}{2} H_{Lip} \norm{s_t}^3 - \frac{1}{2} \norm{\nabla^2 P(x_t) - H_t} \norm{s_t}^2.
	\end{split}
	\end{equation}
	Hence, Lemma \ref{gap between P(x_t) and q_t(s_t)}, Lemma \ref{bound lemma}, \eqref{lambda assumption2} and \eqref{gap between q(s_t) and P_t(x_t+s_t) 2} imply that, for any $t\in \mathbb{N}$,
	\begin{equation}
	\begin{split}
	P(x_t) - P(x_t + s_t) &= P(x_t) - q_t(s_t) + q_t(s_t) - P(x_t + s_t) \\
	&\geq \frac{\lambda_t}{2} \norm{s_t}^2 - (C_1 + \frac{1}{2}C_2 + \frac{1}{2} H_{Lip}) \norm{s_t}^3 \\
	&\geq \eta \norm{s_t}^3,
	\end{split}
	\end{equation}
	which implies that $\rho_t \geq \eta$. The proof is then completed.
\end{proof}

%We now prove bounds for a critical ratio that hold after any contraction.
%\begin{lemma}\quad[\cite{TRACE}, Lemma 3.17]\label{lambda/s relation}
%	For any $t \in \mathbb{N}$, if $t \in \mathcal{C}$, then
%	\[
%	\underline{\sigma} \leq \frac{\lambda_{t+1}}{\norm{s_{t+1}}} 
%	\leq 
%	\max \left\{
%	\overline{\sigma},\left( \frac{\gamma_{\lambda}}{\gamma_{C}}\right)\frac{\lambda_t}{\norm{s_t}} 
%	\right\}
%	\]
%\end{lemma}

%Combing Lemma \ref{lambda success step} and Lemma \ref{lambda/s relation}, we prove that $\{ \sigma_t \}$ is bounded above.
%\begin{lemma}\quad[\cite{TRACE}, Lemma 3.18]\label{sigma upper bound}
%	There exists a scalar constant $\sigma_{max} > 0$ such that, for all $t \in \mathbb{N}$,
%	\[
%	\sigma_t \leq \sigma_{max}.
%	\]
%\end{lemma}

We now establish a lower bound on the norm of any accepted step in $\mathcal{A}_{\sigma}$.
\begin{lemma}\quad\label{s gradient}
	For all $t \in \mathcal{A}_{\sigma}$, the accepted step $s_t$ satifies
	\begin{equation}
	\norm{s_t} \geq (H_{Lip} + C_1 + C_2 + \sigma_{max})^{-\frac{1}{2}}
	\norm{\nabla P(x_{t+1})}^{\frac{1}{2}}.
	\end{equation} \nonumber
\end{lemma}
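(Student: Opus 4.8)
The plan is to bound $\norm{\nabla P(x_{t+1})}$ from above by a constant multiple of $\norm{s_t}^2$ and then take square roots. First I would record two structural consequences of $t \in \mathcal{A}_{\sigma}$. Since $t \in \mathcal{A}$ but $\norm{s_t} \neq \Delta_t$ (because $t \notin \mathcal{A}_{\Delta}$), the ``either\,/\,or'' in the definition of $\mathcal{A}$ must hold through its first alternative, so $\lambda_t \leq \sigma_t \norm{s_t} \leq \sigma_{max}\norm{s_t}$; and the accepted-step branch of Algorithm \ref{TRACE} gives $x_{t+1} = x_t + s_t$. I would also apply Lemma \ref{TR sub} to the model $q_t$ to obtain the first-order optimality relation $g_t + (H_t + \lambda_t \mathbf I)s_t = 0$, i.e. $g_t + H_t s_t = -\lambda_t s_t$.

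Next I would split $\nabla P(x_{t+1}) = \nabla P(x_t + s_t)$ into four pieces,
\begin{align*}
\nabla P(x_{t+1}) = &\;\bigl(\nabla P(x_t+s_t) - \nabla P(x_t) - \nabla^2 P(x_t) s_t\bigr) + \bigl(\nabla P(x_t) - g_t\bigr) \\
&+ \bigl(\nabla^2 P(x_t) - H_t\bigr) s_t + \bigl(g_t + H_t s_t\bigr),
\end{align*}
and bound each term: the first by $\tfrac{H_{Lip}}{2}\norm{s_t}^2$ via \eqref{gradient of P first order expansion error} in Lemma \ref{lem:2.3}; the second by $C_1\norm{s_t}^2$ and the third by $C_2\norm{s_t}^2$ using Lemma \ref{bound lemma}; and the last equals $\norm{\lambda_t s_t} = \lambda_t\norm{s_t} \leq \sigma_{max}\norm{s_t}^2$ by the first structural fact above. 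Adding these and using $\tfrac{H_{Lip}}{2} \leq H_{Lip}$ yields
\[
\norm{\nabla P(x_{t+1})} \leq \Bigl(\tfrac{H_{Lip}}{2} + C_1 + C_2 + \sigma_{max}\Bigr)\norm{s_t}^2 \leq (H_{Lip} + C_1 + C_2 + \sigma_{max})\norm{s_t}^2,
\]
and rearranging gives the stated bound on $\norm{s_t}$.

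The computation itself is routine; the point that needs care — which I would regard as the crux rather than a genuine obstacle — is the observation that membership in $\mathcal{A}_{\sigma}$ (as opposed to $\mathcal{A}_{\Delta}$) forces $\lambda_t \leq \sigma_{max}\norm{s_t}$. This is precisely what upgrades the otherwise merely $O(\norm{s_t})$ contribution $\lambda_t\norm{s_t}$ into an $O(\norm{s_t}^2)$ term, so that all four pieces of $\nabla P(x_{t+1})$ are $O(\norm{s_t}^2)$ and the square-root relation between $\norm{s_t}$ and $\norm{\nabla P(x_{t+1})}$ follows.
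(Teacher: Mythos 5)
Your proof is correct and follows essentially the same route as the paper's: the same decomposition of $\nabla P(x_{t+1})$ via the optimality condition $g_t + (H_t+\lambda_t\mathbf I)s_t = 0$, the bounds $C_1\norm{s_t}^2$ and $C_2\norm{s_t}^2$ from Lemma \ref{bound lemma}, and the key observation that $t\in\mathcal{A}_{\sigma}$ forces $\lambda_t \leq \sigma_t\norm{s_t}\leq\sigma_{max}\norm{s_t}$. The only cosmetic difference is that you bound the Taylor remainder by $\tfrac{H_{Lip}}{2}\norm{s_t}^2$ via Lemma \ref{lem:2.3}, whereas the paper uses a mean-value point $\overline{x}_t$ to get $H_{Lip}\norm{s_t}^2$; your constant is slightly tighter and you correctly relax it to match the stated bound.
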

where $\sigma_{max}$ is an upper bound of $\sigma_t$. 
\begin{proof}
	Firstly, by Lemma 3.18 in \cite{TRACE}, we have $\sigma_t \leq \sigma_{max}$ for all $t\in \mathbb{N}$. For all $t \in \mathcal{A}_{\sigma}$, there exists $\overline{x}_t$ on the line segment $[x_t,x_t + s_t]$ such that
	\begin{equation}\nonumber
	\begin{split}
	\norm{\nabla P(x_{t+1})} &= \norm{\nabla P(x_{t+1}) - g_t - (H_t + \lambda_t I)s_t} \\
	&= \norm{\nabla P(x_{t+1}) - \nabla P(x_t) + \nabla P(x_t) - g_t - (H_t + \lambda_t I)s_t} \\
	&= \norm{[\nabla^2 P(\overline{x}_t) - \nabla^2 P(x_t) + \nabla^2 P(x_t) - H_t] s_t + \nabla P(x_t) - g_t - \lambda_t s_t} \\
	&\leq H_{Lip} \norm{s_t}^2 + C_2 \norm{s_t}^2 + C_1 \norm{s_t}^2 + \frac{\lambda_t}{\norm{s_t}} \norm{s_t}^2 \\
	&\leq (H_{Lip} + C_1 + C_2 + \sigma_{max}) \norm{s_t}^2, 
	\end{split}
	\end{equation}
	where the first equation holds by \eqref{first opt condition}, the first inequality holds by Lemma \ref{bound lemma} and the second inequality holds by $\sigma_t \leq \sigma_{max}$.
\end{proof}

We first give the upper bound of the number of accepted steps that may occur in iterations in which the norm of the gradient of the objective is above a prescribed positive threshold.
\begin{lemma}\quad \label{A sigma lemma}
	For a scalar $\eps \in (0,\infty)$, the total number of elements in the index set
	\[
	\mathcal{K}_{\eps} := \{
	t \in \mathbb{N}: t \geq 1, (t - 1) \in \mathcal{A}_{\sigma}, \norm{\nabla P(x_t)} > \eps
	\}
	\]
	is at most
	\begin{equation}
	\left\lfloor 
	\left(
	\frac{P(x_0) - P^*}{\eta (H_{Lip} + C_1 + C_2 + \sigma_{max})^{-\frac{3}{2}} }
	\right) \eps^{-\frac{3}{2}}
	\right\rfloor
	=: K_{\sigma}(\eps) \geq 0.
	\end{equation}
\end{lemma}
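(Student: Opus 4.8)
The plan is to convert the threshold condition $\norm{\nabla P(x_t)} > \eps$ into a uniform lower bound on $\norm{s_{t-1}}$ via Lemma \ref{s gradient}, then turn that into a uniform per-iteration decrease of $P$ using the acceptance test $\rho_{t-1} \geq \eta$, and finally sum these decreases against the total decrease budget $P(x_0) - P^*$ from Assumption \ref{basic assumption}(4).

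First I would fix $t \in \mathcal{K}_{\eps}$. Since $(t-1) \in \mathcal{A}_{\sigma} \subseteq \mathcal{A}$, the step $s_{t-1}$ is accepted, so $\rho_{t-1} \geq \eta$, which by the definition of $\rho$ in \eqref{rho} gives $P(x_{t-1}) - P(x_t) \geq \eta \norm{s_{t-1}}^3$. Applying Lemma \ref{s gradient} at index $t-1$ yields $\norm{s_{t-1}} \geq (H_{Lip} + C_1 + C_2 + \sigma_{max})^{-1/2} \norm{\nabla P(x_t)}^{1/2}$, and since $\norm{\nabla P(x_t)} > \eps$ this gives $\norm{s_{t-1}}^3 \geq (H_{Lip} + C_1 + C_2 + \sigma_{max})^{-3/2} \eps^{3/2}$. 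Combining, every $t \in \mathcal{K}_{\eps}$ contributes a decrease $P(x_{t-1}) - P(x_t) \geq \eta (H_{Lip} + C_1 + C_2 + \sigma_{max})^{-3/2} \eps^{3/2}$.

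Next I would sum over $t \in \mathcal{K}_{\eps}$. The map $t \mapsto t-1$ is injective on $\mathcal{K}_{\eps}$, and $\{P(x_t)\}$ is monotonically nonincreasing overall (on contraction and expansion steps $x_{t+1} = x_t$, while on accepted steps $P$ strictly decreases), so $\sum_{t \in \mathcal{K}_{\eps}} (P(x_{t-1}) - P(x_t))$ is a sum of nonnegative terms bounded above by the full telescoped decrease $P(x_0) - \lim_t P(x_t) \leq P(x_0) - P^*$. Hence $|\mathcal{K}_{\eps}| \cdot \eta (H_{Lip} + C_1 + C_2 + \sigma_{max})^{-3/2} \eps^{3/2} \leq P(x_0) - P^*$, i.e.\ $|\mathcal{K}_{\eps}| \leq \big( (P(x_0) - P^*) / (\eta (H_{Lip} + C_1 + C_2 + \sigma_{max})^{-3/2}) \big) \eps^{-3/2}$; since $|\mathcal{K}_{\eps}|$ is a nonnegative integer it is bounded by the floor of the right-hand side, which is exactly $K_{\sigma}(\eps)$.

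I do not anticipate a real obstacle here; the argument is a standard complexity-counting estimate. The only point requiring care is the bookkeeping that the individual decreases may be added without double counting, which is handled by the injectivity of $t \mapsto t-1$ on $\mathcal{K}_{\eps}$ together with the monotonicity of $\{P(x_t)\}$ across \emph{all} iteration types, not merely the accepted ones.
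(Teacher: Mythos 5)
Your proof is correct and follows essentially the same route as the paper's: apply Lemma~\ref{s gradient} at index $t-1\in\mathcal{A}_{\sigma}$ to lower-bound $\norm{s_{t-1}}$, combine with the acceptance test $\rho_{t-1}\geq\eta$ to obtain a uniform per-iteration decrease of $P$, and sum against the budget $P(x_0)-P^*$. Your explicit remarks on the injectivity of $t\mapsto t-1$ and the monotonicity of $\{P(x_t)\}$ across all iteration types make the summation step slightly more careful than the paper's, which instead cites Theorem~\ref{gradient convergence theorem} for finiteness of $\mathcal{K}_{\eps}$, but the substance is identical.
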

\begin{proof}
	By Lemma \ref{s gradient}, we have for all $t \in \mathcal{K}_{\eps}$ that
	\begin{equation}\nonumber
	\begin{split}
	P(x_{t-1}) - P(x_t) &\geq \eta \norm{s_{t-1}}^3 \\
	&\geq \eta (H_{Lip} + C_1 + C_2 + \sigma_{max})^{-\frac{3}{2}}
	\norm{\nabla P(x_t)}^{\frac{3}{2}} \\
	&\geq \eta (H_{Lip} + C_1 + C_2 + \sigma_{max})^{-\frac{3}{2}}
	\eps^{\frac{3}{2}}.
	\end{split}
	\end{equation}
	In addition, we have by Theorem \ref{gradient convergence theorem} that  $|\mathcal{K}_{\eps}| < \infty$. Hence, we have that
	\[
	P(x_0) - P^* \geq \sum_{t \in \mathcal{K}_{\eps}} (P(x_{t-1}) - P(x_t)) \geq |\mathcal{K}_{\eps}| \eta (H_{Lip} + C_1 + C_2 + \sigma_{max})^{-\frac{3}{2}}
	\eps^{\frac{3}{2}}.
	\]
	Rearranging this inequality to yield an upper bound for $|\mathcal{K}_{\eps}|$, we obtain the desired result.
\end{proof}

Now we can prove that the number of iterations required to find an $\eps$-FSP is at most $\mathcal{O}(\eps^{-3/2})$.
\begin{theorem}\quad \label{fsp theorem}
	Suppose Assumption \ref{basic assumption} and Assumption \ref{bound assumption} holds. For any given $\eps \in (0,\infty)$, the total number of elements in the index set
	\[
	\{t \in \mathbb{N}: \norm{\nabla P(x_t)} > \eps\}
	\]
	is at most
	\begin{equation}
	\mathcal{K}(\eps):= 1 + (\mathcal{K}_{\sigma}(\eps) + \mathcal{K}_{\Delta}) (1 + \mathcal{K}_{\mathcal{C}}),
	\end{equation}
	where $\mathcal{K}_{\sigma}(\eps)$ is defined in Lemma \ref{A sigma lemma}, $\mathcal{K}_{\Delta} = \left \lfloor
	\frac{P(x_0) - P^*}{\eta \Delta_0^3}
	\right \rfloor$ and $\mathcal{K}_{\mathcal{C}}=1 + \left\lfloor
	\frac{1}{\log(\min\{ \gamma_{\lambda},\gamma_{C}^{-1}
		\})}
	\log\left(
	\frac{\sigma_{max}}{\underline{\sigma}}
	\right)
	\right\rfloor$, respectively. Consequently, we have $\mathcal{K}(\eps) = \mathcal{O}(\eps^{-3/2})$.
\end{theorem}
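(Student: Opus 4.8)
The plan is to bound the number of ``bad'' iterations---those with $\norm{\nabla P(x_t)}>\eps$---by partitioning them according to the type classification $\mathcal{A}_\sigma$, $\mathcal{A}_\Delta$, $\mathcal{C}$, $\mathcal{E}$ and counting each type via an amortization argument. The skeleton mirrors Theorem 3.20 in \cite{TRACE}: first I would show that every maximal block of consecutive iterations can be charged to a single accepted step, then count accepted steps of each subtype. Concretely, I would proceed as follows.

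\textbf{Step 1: reduce to counting accepted steps.} Observe that when $t\notin\mathcal{A}$ we have $x_{t+1}=x_t$, so $\norm{\nabla P(x_t)}$ does not change across a run of rejected/expansion steps; hence a bad index $t$ with $t\notin\mathcal{A}$ is ``witnessed'' by the next accepted index. So it suffices to bound the number of accepted bad steps and then multiply by the maximal length of an interleaving block of contraction steps. Within such a block of consecutive contractions, Lemma \ref{radius relation} gives $\lambda_{t+1}\geq\gamma_\lambda\lambda_t$ (or $\delta_{t+1}\leq\gamma_C\delta_t$), while Step 14 of Algorithm \ref{MINIMAX-TRACE} and the bound $\sigma_t\leq\sigma_{max}$ force $\lambda_t/\norm{s_t}\le\sigma_{max}$ to be reached within $1+\lfloor\log(\sigma_{max}/\underline\sigma)/\log(\min\{\gamma_\lambda,\gamma_C^{-1}\})\rfloor=\mathcal{K}_{\mathcal{C}}$ steps; this yields the factor $(1+\mathcal{K}_{\mathcal C})$. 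Expansion steps ($\mathcal E$) are handled because each is immediately followed by an accepted or contraction step, so they do not need a separate count.

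\textbf{Step 2: count bad accepted steps.} Split $\mathcal{A}$ into $\mathcal{A}_\sigma$ and $\mathcal{A}_\Delta$. For $\mathcal{A}_\sigma$, Lemma \ref{A sigma lemma} already gives $|\mathcal{K}_\eps|\le\mathcal{K}_\sigma(\eps)$. For $\mathcal{A}_\Delta$, use $\norm{s_t}=\Delta_t\geq\Delta_0$ together with $\rho_t\geq\eta$, so each such step decreases $P$ by at least $\eta\Delta_0^3$; since $P$ is bounded below by $P^*$ there are at most $\lfloor(P(x_0)-P^*)/(\eta\Delta_0^3)\rfloor=\mathcal{K}_\Delta$ of them. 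Adding the ``$+1$'' for iteration $t=0$ (the initial step before the loop-type bookkeeping kicks in) and multiplying the accepted count $\mathcal{K}_\sigma(\eps)+\mathcal{K}_\Delta$ by $(1+\mathcal{K}_{\mathcal C})$ gives $\mathcal{K}(\eps)=1+(\mathcal{K}_\sigma(\eps)+\mathcal{K}_\Delta)(1+\mathcal{K}_{\mathcal C})$.

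\textbf{Step 3: the $\mathcal{O}(\eps^{-3/2})$ rate.} Among the three pieces, $\mathcal{K}_\Delta$ and $\mathcal{K}_{\mathcal C}$ are constants independent of $\eps$, while $\mathcal{K}_\sigma(\eps)=\Theta(\eps^{-3/2})$ by its definition in Lemma \ref{A sigma lemma}; hence $\mathcal{K}(\eps)=\mathcal{O}(\eps^{-3/2})$. The main obstacle is Step 1: making the ``charge each block to one accepted step'' bookkeeping airtight, i.e. verifying that $\sigma_t$ is genuinely bounded (invoking Lemma 3.18 of \cite{TRACE} as in the proof of Lemma \ref{s gradient}) and that a maximal run of $\mathcal{C}$-steps really terminates in at most $\mathcal{K}_{\mathcal C}$ steps because of the geometric growth of $\lambda_t$ relative to $\norm{s_t}$. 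Once that combinatorial counting is set up, the rest is the routine telescoping of $P(x_t)$ used in Step 2.
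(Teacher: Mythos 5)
Your proposal is correct and follows essentially the same route as the paper's proof: partition the bad iterations by step type, bound the accepted ones via Lemma \ref{A sigma lemma} (for $\mathcal{A}_\sigma$) and the telescoping bound $\lfloor (P(x_0)-P^*)/(\eta\Delta_0^3)\rfloor$ (for $\mathcal{A}_\Delta$), and multiply by the per-block cap $1+\mathcal{K}_{\mathcal C}$ on interleaved expansion and contraction steps. The paper simply outsources the block-length and $\mathcal{A}_\Delta$ counts to Lemmas 3.21, 3.22 and 3.24 of \cite{TRACE}, whereas you sketch those arguments directly; the substance is identical.
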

\begin{proof}
	Without loss of generality, we may assume that at least one iteration is performed. Firstly, we partiton the set $\{t \in \mathbb{N}: \norm{\nabla P(x_t)} \geq \eps\}$ into four disjoint subsets:
 	$\{
 	t \in \mathbb{N}: t \geq 1, (t - 1) \in \mathcal{A}_{\sigma}, \norm{\nabla P(x_t)} > \eps
 	\}$,
 	$\{
 	t \in \mathbb{N}: t \geq 1, (t - 1) \in \mathcal{A}_{\Delta}, \norm{\nabla P(x_t)} \geq \eps
 	\}$,
 	$\{
 	t \in \mathbb{N}: t \geq 1, (t - 1) \in \mathcal{C}, \norm{\nabla P(x_t)} \geq \eps
 	\}$,
 	$\{
 	t \in \mathbb{N}: t \geq 1, (t - 1) \in \mathcal{E}, \norm{\nabla P(x_t)} \geq \eps
 	\}$.
 	By Lemma 3.21 in \cite{TRACE}, we have that the cardinality of the set $\mathcal{A}_{\Delta}$ is bounded above by 
 	\begin{equation} \label{A Delta}
 	\left \lfloor
 	\frac{P(x_0) - P^*}{\eta \Delta_0^3}
 	\right \rfloor = \mathcal{K}_{\Delta} \geq 0.
 	\end{equation}
	For the purpose of deriving an upper bound on the numbers of contraction and expansion iterations that may occur until the next accepted step, let us define, for any given $\hat{t} \in \mathcal{A}$,
	\begin{equation}\nonumber
	\begin{split}
	t_{\mathcal{A}}(\hat{t}) := \min \{
	t \in \mathcal{A}: t > \hat{t}
	\}, \\ 
	\mathcal{I}(\hat{t}) := \{
	t \in \mathbb{N}: \hat{t} < t < t_{\mathcal{A}}(\hat{t})
	\}.
	\end{split}
	\end{equation}
	For any $\hat{t} \in \mathbb{N}$, if $\hat{t} \in \mathcal{A} \cup \{0\}$, by Lemmas 3.22 and 3.24 in \cite{TRACE}, we have that
	\begin{equation} \label{expand bound}
		\mathcal{E} \cap \mathcal{I}(\hat{t}) \subseteq \{\hat{t} + 1\}
	\end{equation} 
	and
	\begin{equation} \label{contract bound}
	|\mathcal{C}\cap \mathcal{I}(\hat{t})|
	\leq 1 + \left\lfloor
	\frac{1}{\log(\min\{ \gamma_{\lambda},\gamma_{C}^{-1}
		\})}
	\log\left(
	\frac{\sigma_{max}}{\underline{\sigma}}
	\right)
	\right\rfloor = \mathcal{K}_{\mathcal{C}} \geq 0.
	\end{equation} 
	Lemma \ref{A sigma lemma} and \eqref{A Delta} guarantee that the total number of elements in the index set $\{t\in \mathbb{N}: t\geq 1, t-1\in \mathcal{A}, \norm{\nabla P(x_t)} \geq \eps\}$ is at most $\mathcal{K}_{\sigma}(\eps) + \mathcal{K}_{\Delta}$. By \eqref{contract bound} and \eqref{expand bound}, we have that at most $1+\mathcal{K}_{\mathcal{C}}$ sub-problem routine calls are required in expansion and contraction, which completes the proof.
\end{proof}

Now we have proved global convergence result of Algorithm \ref{MINIMAX-TRACE} and the iteration complexity of Algorithm \ref{MINIMAX-TRACE} to find $\eps$-FSP. Based on these results, we then analyze the iteration complexity of Algorithm \ref{MINIMAX-TRACE} to find $(\eps, \sqrt{\eps})$-SSP. We continue to use the notation introduced in Lemma \ref{radius lower bound}, where, for all $t \in \mathbb{N}$, we let $H_t = V_t \Xi_t V_t^T$ where $V_t$ is an orthonormal matrix of eigenvectors and $\Xi_t = diag(\xi_{t,1},\dots,\xi_{t,n})$ with $\xi_{t,1}\leq \dots \leq \xi_{t,n}$ is a diagonal matrix of eigenvalues of $H_t$.

%We first show that the squence of $\{ \xi_{t,1} \}$ has a limit point at zero.
%\begin{theorem}\quad[\cite{TRACE}, Theorem 3.26]
%	Under Assumption \ref{bound assumption}, it holds that
%	\[
%	\mathop{\lim \inf}\limits_{t \in \mathbb{N},t \rightarrow \infty} \xi_{t,1} \geq 0.
%	\]
%\end{theorem} 
%
%We now provide an upper bound on the number of accepted steps that occur when the leftmost eigenvalue of the Hessian matrix is below a negative threshold.
%\begin{lemma}\quad[\cite{TRACE}, Lemma 3.27]\label{K eps 2}
%	For a scalar $\eps \in (0,\infty)$, the total number of elements in the index set
%	\[
%	\mathcal{K}_{\eps,2} := \{
%	t \in \mathcal{A}_{\sigma}< -\eps
%	\}
%	\]
%	is at most
%	\[
%	\left\lfloor 
%	\left(
%	\frac{f_0 - f_{min}}{\eta  \sigma_{max}^{-3}}
%	\right) \eps^{-3}
%	\right\rfloor
%	=: K_{\sigma,2}(\eps) \geq 0.
%	\]
%\end{lemma}
%Now we can prove that the number of iterations required to find an $\eps$-SSP is at most $\mathcal{O}(\eps^{-3/2})$.
\begin{theorem}\quad
	Under Assumption \ref{basic assumption} and Assumption \ref{bound assumption}, for a scalar $\eps \in (0,\infty)$, the total number of elements in the index set
	\[
	\{t \in \mathbb{N}: \norm{\nabla P(x_t)} > \eps\ \text{ or } \xi_{t,1} < -\frac{\sqrt{\eps}}{2} \}
	\]
	is at most
	\begin{equation}
	\mathcal{K}_2 (\eps):= 1 + (\mathcal{K}_{\sigma}(\eps) + 
	\mathcal{K}_{\sigma,2}(\eps) + 
	\mathcal{K}_{\Delta}) (1 + \mathcal{K}_{\mathcal{C}}),
	\end{equation}
	where $\mathcal{K}_{\sigma,2}(\eps) = \left\lfloor 
	\left(
	\frac{8(f_0 - f_{min})}{\eta  \sigma_{max}^{-3}}
	\right) \eps^{-3/2}
	\right\rfloor$, $\mathcal{K}_{\sigma}(\eps)$, $\mathcal{K}_{\Delta}$,  $\mathcal{K}_{\mathcal{C}}$
	are defined in Lemma \ref{A sigma lemma}, \eqref{A Delta} and \eqref{contract bound} respectively. Consequently, we have $\mathcal{K}_2(\eps) = \mathcal{O}(\eps^{-3/2})$. In addition, if we set $M_2=\sqrt{\eps}/2$ in Assumption \ref{bound assumption}, then $x_t$ is an  $(\eps, \sqrt{\eps})$-SSP for all $t > \mathcal{K}_2(\eps)$.
\end{theorem}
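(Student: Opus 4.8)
The plan is to follow the proof of Theorem \ref{fsp theorem} almost verbatim, the only genuinely new ingredient being a second-order counterpart of Lemma \ref{s gradient} that converts a very negative eigenvalue $\xi_{t,1}$ of $H_t$ at an accepted step $t\in\mathcal{A}_\sigma$ into a lower bound on $\norm{s_t}$, hence into a guaranteed amount of function decrease. First I would prove that for every $t\in\mathcal{A}_\sigma$ one has $\norm{s_t}\geq -\xi_{t,1}/\sigma_{max}$. Indeed, $t\in\mathcal{A}_\sigma$ means $t\in\mathcal{A}$ with $\norm{s_t}\neq\Delta_t$, so the definition of $\mathcal{A}$ forces $\lambda_t\leq\sigma_t\norm{s_t}\leq\sigma_{max}\norm{s_t}$ (using $\sigma_t\leq\sigma_{max}$, Lemma 3.18 of \cite{TRACE}, as already in Lemma \ref{s gradient}), while the second-order optimality condition \eqref{second opt condition} applied to the subproblem $Q_t$ gives $H_t+\lambda_t\mathbf{I}\succeq 0$, i.e. $\xi_{t,1}\geq-\lambda_t$; combining the two yields the bound. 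In particular, if $\xi_{t,1}<-\tfrac12\sqrt{\eps}$, then $\norm{s_t}>\sqrt{\eps}/(2\sigma_{max})$.

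Since every accepted step satisfies $\rho_t\geq\eta$, i.e. $P(x_t)-P(x_{t+1})\geq\eta\norm{s_t}^3$, each $t\in\mathcal{A}_\sigma$ with $\xi_{t,1}<-\tfrac12\sqrt{\eps}$ produces a decrease $P(x_t)-P(x_{t+1})>\eta\eps^{3/2}/(8\sigma_{max}^3)$; as $\{P(x_t)\}$ is nonincreasing and bounded below by $P^*$, the telescoping argument of Lemma \ref{A sigma lemma} shows there are at most $\mathcal{K}_{\sigma,2}(\eps)$ such indices. Adding the bound $\mathcal{K}_\sigma(\eps)$ of Lemma \ref{A sigma lemma} on the indices with $\norm{\nabla P(x_{t+1})}>\eps$, the number of steps in $\mathcal{A}_\sigma$ that are unproductive in the first- or second-order sense is at most $\mathcal{K}_\sigma(\eps)+\mathcal{K}_{\sigma,2}(\eps)$. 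One then partitions $\{t\in\mathbb{N}:\norm{\nabla P(x_t)}>\eps\text{ or }\xi_{t,1}<-\tfrac12\sqrt{\eps}\}$ exactly as in Theorem \ref{fsp theorem}, according to the type of the preceding accepted iteration: the part charged to $\mathcal{A}_\sigma$ is bounded by $\mathcal{K}_\sigma(\eps)+\mathcal{K}_{\sigma,2}(\eps)$, the part charged to $\mathcal{A}_\Delta$ by $\mathcal{K}_\Delta=|\mathcal{A}_\Delta|$ (Lemma \ref{setA} and \eqref{A Delta}), the index $t=0$ contributes $1$, and by \eqref{expand bound} and \eqref{contract bound} each such anchor is followed by at most $1+\mathcal{K}_{\mathcal{C}}$ contraction/expansion iterations during which $x_t$ is frozen. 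This gives $\mathcal{K}_2(\eps)=1+(\mathcal{K}_\sigma(\eps)+\mathcal{K}_{\sigma,2}(\eps)+\mathcal{K}_\Delta)(1+\mathcal{K}_{\mathcal{C}})=\mathcal{O}(\eps^{-3/2})$, and the partition localizes every bad iteration among the first $\mathcal{K}_2(\eps)$ indices, so $x_t$ lies outside the bad set for every $t>\mathcal{K}_2(\eps)$. The ``in addition'' claim is then immediate from Lemma \ref{bound lemma}: if $M_2=\sqrt{\eps}/2$ and $t$ is outside the bad set, then $\norm{\nabla P(x_t)}\leq\eps$ and $\nabla^2 P(x_t)\succeq H_t-M_2\mathbf{I}\succeq\xi_{t,1}\mathbf{I}-\tfrac12\sqrt{\eps}\,\mathbf{I}\succeq-\sqrt{\eps}\,\mathbf{I}$, so $x_t$ is an $(\eps,\sqrt{\eps})$-SSP by Definition \ref{approximate SSP}.

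The main obstacle is not any individual estimate but the bookkeeping of this partition. One must charge every bad iteration — in particular the contraction and expansion iterations, where $x_t$ is frozen but the Hessian estimate $H_t$ still changes with the re-optimized $y_t$ — to a distinct productive accepted step (or to $t=0$) without double counting, and must add the two independent sources of function decrease (first-order from Lemma \ref{s gradient}, second-order from the new lemma above) so that both the $\norm{\nabla P(x_t)}>\eps$ and the $\xi_{t,1}<-\tfrac12\sqrt{\eps}$ conditions are absorbed. This is exactly where the calibration $M_2=\sqrt{\eps}/2$ enters: it lets a frozen-$x$ block inherit the second-order deficiency of its anchor up to an $\mathcal{O}(\sqrt{\eps})$ slack, which is what makes the partition close and what forces all remaining iterations to be $(\eps,\sqrt{\eps})$-SSPs.
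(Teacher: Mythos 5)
Your proposal is correct and follows essentially the same route as the paper: bound the number of accepted steps in $\mathcal{A}_{\sigma}$ with $\xi_{t,1}<-\sqrt{\eps}/2$ by $\mathcal{K}_{\sigma,2}(\eps)$, reuse the partition and the $(1+\mathcal{K}_{\mathcal{C}})$ charging scheme of Theorem \ref{fsp theorem}, and verify the SSP property via Lemma \ref{bound lemma} with $M_2=\sqrt{\eps}/2$. The only difference is that the paper simply cites Lemma 3.27 of \cite{TRACE} for the count of second-order-deficient accepted steps, whereas you re-derive it from scratch via the chain $-\xi_{t,1}\leq\lambda_t\leq\sigma_t\norm{s_t}\leq\sigma_{max}\norm{s_t}$ together with $\rho_t\geq\eta$; that derivation is exactly the content of the cited lemma and is correct.
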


\begin{proof}
	By Lemma 3.27 in \cite{TRACE}, we have that the total number of elements in the index set
	\[
	\mathcal{K}_{\eps,2} := \{
	t \in \mathcal{A}_{\sigma}: \xi_{t,1} < -\frac{\sqrt{\eps}}{2}
	\}
	\]
	is at most
	\begin{equation}\label{A sigma 2}
	\left\lfloor 
	\left(
	\frac{8(P(x_0) - P^*)}{\eta  \sigma_{max}^{-3}}
	\right) \eps^{-3/2}
	\right\rfloor
	= K_{\sigma,2}(\eps) \geq 0.
	\end{equation}
	The rest of the proof is similar to that of Theorem \ref{fsp theorem}, except that we
	must now account for additional accepted steps—an upper bound for which is provided by \eqref{A sigma 2} that may occur until the leftmost eigenvalue of the Hessian matrix
	is above the desired threshold. For all $t > \mathcal{K}_2(\eps)$, we have $\norm{\nabla P(x_t)} \leq \eps$ and $\xi_{t,1} \geq -\frac{\sqrt{\eps}}{2}$. Then we obtain
	\begin{equation}
		\nabla^2 P(x_t) = H_t - (H_t - \nabla^2 P(x_t)) \succcurlyeq  -\frac{\sqrt{\eps}}{2} \mathbf I - \frac{\sqrt{\eps}}{2} \mathbf I = - \sqrt{\eps} \mathbf I,
	\end{equation} which show that $x_t$ is an  $(\eps, \sqrt{\eps})$-SSP.
\end{proof}

Now we have established convergence and complexity to $\eps$-FSP and $(\eps, \sqrt{\eps})$-SSP. In the rest of this section, we present the local superlinear
convergence of Algorithm \ref{MINIMAX-TRACE}. For this purpose, in addition to Assumptions \ref{basic assumption} and \ref{bound assumption},
we make the following assumption.
\begin{assumption}\label{local ass}
	Let $P(x)$ be twice Lipschitz continuously differentiable in a neighborhood of a point $x^*$ at which second-order sufficient conditions are satisfied ($\nabla P(x^*) = 0, \nabla^2 P(x^*) \succ 0$). Suppose the sequence $\{x_t\}$ converges to $x^*$.
\end{assumption}

\begin{lemma}\quad\label{local lemma}
	Under Assumption \ref{basic assumption}, Assumption \ref{bound assumption} and Assumption \ref{local ass}, there exists an iteration number $t_\mathcal{A} \in \mathbb{N}$ such that, for all $t \in \mathbb{N}$ with $t \geq t_{\mathcal{A}}$, the trial step, dual variable, and the iteration number satisfy $s_t = -H_t^{-1} g_t,  \lambda_t = 0$ and $t\in \mathcal{A}$, respectively. That is, eventually, all computed trial steps are Newton steps that are accepted by the algorithm.
\end{lemma}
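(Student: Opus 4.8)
The plan is to follow the structure of the local convergence analysis of TRACE (Lemmas 3.29--3.31 in \cite{TRACE}), adapting each step to account for the inexact gradient and Hessian estimators $g_t$ and $H_t$, whose errors are controlled by Lemma \ref{bound lemma}. First I would use Assumption \ref{local ass}: since $\nabla^2 P(x^*) \succ 0$ and $P$ is twice Lipschitz continuously differentiable near $x^*$, there exists a radius $r_0>0$ and constants $0 < \mu_0 \leq L_0$ such that $\mu_0 \mathbf I \preceq \nabla^2 P(x) \preceq L_0 \mathbf I$ for all $x$ in the ball $B(x^*,r_0)$. Because $\{x_t\} \to x^*$, all but finitely many iterates lie in this ball; moreover, since $x_{t+1} = x_t$ on rejected/expansion steps and the accepted steps satisfy $\norm{s_t} \to 0$ (which follows from Lemma \ref{setA} together with the summability $\sum_{t\in\mathcal A}\eta\norm{s_t}^3 \leq P(x_0)-P^*$), we have $\norm{s_t}\to 0$ along \emph{all} $t$ as well. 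Combining $\norm{s_t}\to 0$ with Lemma \ref{bound lemma} gives $\norm{\nabla^2 P(x_t) - H_t} \leq C_2\norm{s_t} \to 0$, so for $t$ large enough $H_t$ is positive definite with eigenvalues bounded in $[\mu_0/2, 2L_0]$; in particular $\xi_{t,1} > 0$.

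Next I would show $\lambda_t = 0$ for all large $t$. The key point is that once $H_t \succ 0$, the \emph{unconstrained} minimizer of $q_t$ is the Newton step $s_t^{N} := -H_t^{-1}g_t$, and by the Lemma \ref{TR sub} optimality conditions $\lambda_t = 0$ precisely when $\norm{s_t^N} \leq \delta_t$. So it suffices to show $\delta_t$ is eventually bounded below by a positive constant while $\norm{s_t^N}\to 0$. For the former: by Theorem \ref{gradient convergence theorem}, $\norm{\nabla P(x_t)}\to 0$, hence $\norm{g_t} \leq \norm{\nabla P(x_t)} + C_1\norm{s_t}^2 \to 0$; but $\norm{s_t^N} \leq \norm{H_t^{-1}}\norm{g_t} \leq (2/\mu_0)\norm{g_t}\to 0$. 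For the lower bound on $\delta_t$, I would argue that once $\lambda_t$ is small (which I establish via Lemma \ref{lambda success step 2}: if $\lambda_t$ were bounded away from $0$ while $\norm{s_t}\to 0$ then \eqref{lambda assumption2} holds for large $t$, forcing $\rho_t \geq \eta$ and $t\in\mathcal A$, and then the $\sigma$-update keeps $\sigma_t$ from growing, which in turn via the CONTRACT mechanics prevents $\delta_t$ from shrinking) the radius stabilizes; more directly, once $\norm{s_t}=\norm{s_t^N}$ the algorithm takes pure Newton steps and the expansion rule $\delta_{t+1}\geq \gamma_E\norm{s_t}$ in Algorithm \ref{TRACE} keeps $\delta_{t+1}$ comfortably above $\norm{s_{t+1}^N}$, since $\norm{s_{t+1}^N}$ is of order $\norm{s_t^N}^2$ by the quadratic contraction below. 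This is the step I expect to be the main obstacle: carefully chaining the radius-update logic of TRACE so that $\delta_t$ does not collapse faster than $\norm{g_t}$, in the presence of the extra $C_1\norm{s_t}^2$, $C_2\norm{s_t}$ perturbation terms; it requires essentially reproducing the TRACE Lemma 3.30 argument with these error terms absorbed.

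Finally, with $\lambda_t = 0$ and $s_t = s_t^N = -H_t^{-1}g_t$ for $t \geq t_{\mathcal A}$, I would verify $t\in\mathcal A$ by checking the acceptance test in Algorithm \ref{TRACE}. The condition ``$\lambda_t \leq \sigma_t\norm{s_t}$'' is trivially satisfied since $\lambda_t = 0$ and $\sigma_t\norm{s_t} > 0$ (recall $\sigma_t \geq \underline\sigma > 0$ and $\norm{s_t}>0$ by Lemma \ref{s lower bound}), so it only remains to check $\rho_t \geq \eta$. For this I use the quadratic model: by Lemma \ref{lem:2.3}, $\norm{\nabla P(x_t + s_t) }$ and the model decrease are controlled by $H_{Lip}\norm{s_t}^2$; combined with $\norm{g_t - \nabla P(x_t)}\leq C_1\norm{s_t}^2$ and $\norm{H_t - \nabla^2 P(x_t)}\leq C_2\norm{s_t}$ from Lemma \ref{bound lemma}, and the fact that the exact Newton decrement gives $P(x_t) - P(x_t + s_t) \geq \tfrac{1}{2}\norm{g_t}^2/\norm{H_t} \gtrsim (\mu_0/4L_0^2)\norm{g_t}^2$ while $\norm{s_t}^3 \leq (2/\mu_0)^3\norm{g_t}^3$, one gets $\rho_t = (P(x_t)-P(x_t+s_t))/\norm{s_t}^3 \gtrsim \norm{g_t}^{-1} \to \infty$, which certainly exceeds $\eta$ for large $t$. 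Hence $t\in\mathcal A$ for all $t\geq t_{\mathcal A}$, completing the proof. (The quadratic convergence rate itself then follows as a corollary from $s_t = -H_t^{-1}g_t$ plus the Lipschitz estimates, but that is the content of the next lemma, not this one.)
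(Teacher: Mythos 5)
Your overall strategy (adapt TRACE's local analysis, absorbing the $C_1\norm{s_t}^2$ and $C_2\norm{s_t}$ error terms from Lemma \ref{bound lemma}) is the right one, and several pieces match the paper: eventual positive definiteness of $H_t$ from $\nabla^2P(x^*)\succ 0$ plus $\norm{s_t}\to 0$, the bound $\norm{H_t^{-1}g_t}\lesssim \norm{g_t}\to 0$, and the final acceptance check via $\lambda_t=0\le\sigma_t\norm{s_t}$. But there is a genuine gap at exactly the point you flag yourself: the lower bound on $\delta_t$. Your argument for it is circular --- you invoke ``once $\norm{s_t}=\norm{s_t^N}$ the algorithm takes pure Newton steps and the expansion rule keeps $\delta_{t+1}$ above $\norm{s_{t+1}^N}$,'' but $s_t=s_t^N$ is precisely what the $\delta_t$ lower bound is supposed to deliver (via $\lambda_t=0$); and the dichotomy through Lemma \ref{lambda success step 2} only handles the iterations where $\lambda_t$ is \emph{large} relative to $\norm{s_t}$, leaving the case of small-but-nonzero $\lambda_t$ (where a contraction could still occur a priori) unresolved. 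Deferring to ``reproducing TRACE Lemma 3.30'' does not close this, because that is where the inexactness actually has to be absorbed.

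The paper closes the gap by reversing the order of the argument: it establishes $\rho_t\ge\eta$ for \emph{all} sufficiently large $t$ \emph{before} knowing anything about $\lambda_t$. The key tool is the exact identity in Lemma \ref{gap between P(x_t) and q_t(s_t)}, $P(x_t)-q_t(s_t)=\tfrac12 s_t^T(H_t+\lambda_t I)s_t+\tfrac12\lambda_t\norm{s_t}^2$, combined with $\norm{g_t}\ge\norm{s_t}/\norm{H_t^{-1}}$ once $H_t\succ 0$, which yields $P(x_t)-q_t(s_t)\ge\xi^*\norm{s_t}^2$ with $\xi^*=1/(4\norm{\nabla^2P(x^*)}\norm{\nabla^2P(x^*)^{-1}}^2)$ regardless of whether the trust region is active. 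Subtracting the $\mathcal{O}(\norm{s_t}^3)$ model error (as in Lemma \ref{lambda success step 2}) then gives $P(x_t)-P(x_t+s_t)\ge\xi^*\norm{s_t}^2-(C_1+\tfrac12C_2+\tfrac12H_{Lip})\norm{s_t}^3\ge\eta\norm{s_t}^3$ once $\norm{s_t}$ is small, so $t\notin\mathcal{C}$. Lemma \ref{radius relation} then gives $\delta_{t+1}\ge\delta_t$ for all large $t$, hence $\delta_t\ge\delta_{min}>0$, and since $\norm{s_t}\to 0$ the complementarity condition \eqref{complementary condition} forces $\lambda_t=0$, which yields the Newton step and places $t$ in $\mathcal{A}$ rather than $\mathcal{E}$. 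You should restructure your proof to prove the $\rho_t\ge\eta$ bound first in this unconditional form; with that change the rest of your outline goes through.
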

\begin{proof}
	By Assumption \ref{local ass}, there exists a constant $\mu$ such that $\nabla^2 P(x_t) \succeq \mu I \succ 0$ for all sufficient large $t \in \mathbb{N}$. For sufficient large $t \in \mathbb{N}$, we have
	\begin{equation}\label{H_t local}
	H_t \succeq \nabla^2 P(x_t) - \norm{\nabla^2 P(x_t) - H_t} I   \succeq \mu I - C_2 \norm{s_t} I \succeq \frac{\mu}{2} I,
	\end{equation}
	where the second inequality holds by Lemma \ref{bound lemma} and the last inequality holds since $\norm{s_t} \rightarrow 0$, which implied by $\{x_t\} \rightarrow x^*$. Hence, for all such $t$, we either have $s_t = - H_t^{-1} g_t$ or the Newton step $-H_t^{-1} g_t$ lies outside the trust region.
	
	By Lemma \ref{gap between P(x_t) and q_t(s_t)}, we have that
	\begin{equation}\nonumber
	\begin{split}
	P(x_t) - q_t(s_t) &\geq \frac{1}{2} \norm{g_t} \min \left\{ \delta_t, \frac{\norm{g_t}}{\norm{H_t}}
	\right\} \\
	&\geq \frac{1}{2} \frac{\norm{s_t}}{\norm{H_t^{-1}}} \min \left\{ \norm{s_t}, \frac{\norm{s_t}}{\norm{H_t} \norm{H_t^{-1}}}
	\right\} \\
	&\geq \frac{1}{2} \frac{\norm{s_t}^2}{\norm{H_t} \norm{H_t^{-1}}^2}.
	\end{split}
	\end{equation}
	
	By Lemma \ref{bound lemma} and $x_t \rightarrow x*$, we have
	\begin{equation}
	\begin{split}
	\norm{H_t - \nabla^2 P(x^*)} 
	&\leq \norm{H_t - \nabla^2 P(x_t)} + \norm{\nabla^2 P(x_t) - \nabla^2 P(x^*)} \\ &\leq C_2 \norm{s_t} + H_{Lip} \norm{x_t - x^*} \rightarrow 0.
	\end{split}
	\end{equation}
	Thus, with $\xi^* = 1/(4\norm{\nabla^2 P(x^*)} \norm{\nabla^2 P(x^*)^{-1}}^2)$, we have for sufficiently large $t \in \mathbb{N}$ that
	\[
	P(x_t) - q_t(s_t) \geq \xi^* \norm{s_t}^2.
	\]
	
	As in the proof of Lemma \ref{lambda success step 2}, for all sufficiently large $t \in \mathbb{N}$, 
	\begin{equation}
	\begin{split}
	P(x_t) - P(x_t + s_t) &= P(x_t) - q_t(s_t) + q_t(s_t) - P(x_t + s_t) \\
	&\geq \xi^* \norm{s_t}^2 - (C_1 + \frac{1}{2}C_2 + \frac{1}{2} H_{Lip}) \norm{s_t}^3 \\
	&\geq \eta \norm{s_t}^3,
	\end{split}
	\end{equation}
 which means that for all sufficiently large $t \in \mathbb{N}$, we have $\rho_t \geq \eta$ such that $t \in \mathcal{A} \cup \mathcal{E}$. 
	
	By Lemma \ref{radius relation}, it follows that there exists a scalar constant $\delta_{min} > 0$ such that $\delta_t \geq \delta_{min}$ for all sufficiently large $t \in \mathbb{N}$. Since $\norm{s_t} \rightarrow 0$, we have $\norm{s_t} < \delta_{min} \leq \delta_t$ and $\lambda_t = 0$ for all sufficiently large $t \in \mathbb{N}$. This implies that $t\in \mathcal{A}$ and the Newton step lies in the trust region.
\end{proof}

\begin{theorem}\quad
	Under Assumption \ref{basic assumption}, Assumption \ref{bound assumption} and Assumption \ref{local ass}, it holds that 
	\begin{equation}
	\norm{\nabla P(x_{t+1})} = \mathcal{O}(\norm{\nabla P(x_t)}^2),
	\end{equation}
	i.e., the sequence $\{\nabla P(x_t)\}$ Q-quadratically converges to $0$.
\end{theorem}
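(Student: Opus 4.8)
The plan is to invoke Lemma \ref{local lemma} to reduce to the regime in which every step is a pure (inexact) Newton step, and then run the classical quadratic-convergence estimate for Newton's method, keeping track of the inexactness in $g_t$ and $H_t$ quantified by Lemma \ref{bound lemma}.

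First, by Lemma \ref{local lemma} there is an iteration index $t_{\mathcal A}$ such that for every $t \geq t_{\mathcal A}$ we have $s_t = -H_t^{-1} g_t$, $\lambda_t = 0$, and $t \in \mathcal A$, so that $x_{t+1} = x_t + s_t$; moreover (as in the proof of that lemma) $H_t \succeq \frac{\mu}{2}\mathbf I$, hence $\norm{H_t^{-1}} \leq 2/\mu$, and $\norm{s_t} \to 0$ because $x_t \to x^*$. Fix such a $t$. Using the identity $g_t + H_t s_t = 0$ and the twice Lipschitz continuous differentiability of $P$ near $x^*$ from Assumption \ref{local ass} (which yields the same first-order expansion bound as in Lemma \ref{lem:2.3}), I would write
\begin{align*}
\nabla P(x_{t+1}) = {}& \big(\nabla P(x_{t+1}) - \nabla P(x_t) - \nabla^2 P(x_t) s_t\big) \\
& + \big(\nabla P(x_t) - g_t\big) + \big(\nabla^2 P(x_t) - H_t\big) s_t,
\end{align*}
and bound the three terms by $\frac{H_{Lip}}{2}\norm{s_t}^2$, by $C_1\norm{s_t}^2$, and by $C_2\norm{s_t}^2$ respectively, the last two via Lemma \ref{bound lemma}. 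This gives $\norm{\nabla P(x_{t+1})} \leq \big(\frac{H_{Lip}}{2} + C_1 + C_2\big)\norm{s_t}^2$.

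It then remains to control $\norm{s_t}$ by $\norm{\nabla P(x_t)}$. From $s_t = -H_t^{-1}g_t$ and Lemma \ref{bound lemma} once more we get $\norm{s_t} \leq \frac{2}{\mu}\norm{g_t} \leq \frac{2}{\mu}\big(\norm{\nabla P(x_t)} + C_1\norm{s_t}^2\big)$. Since $\norm{s_t}\to 0$, for all sufficiently large $t$ we have $\frac{2C_1}{\mu}\norm{s_t} \leq \frac{1}{2}$, so this inequality rearranges to $\norm{s_t} \leq \frac{4}{\mu}\norm{\nabla P(x_t)}$. Substituting into the previous bound yields $\norm{\nabla P(x_{t+1})} \leq \frac{16}{\mu^2}\big(\frac{H_{Lip}}{2} + C_1 + C_2\big)\norm{\nabla P(x_t)}^2$, which is the asserted $\mathcal O(\norm{\nabla P(x_t)}^2)$ estimate and hence the claimed Q-quadratic convergence of $\{\nabla P(x_t)\}$ to $0$.

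Once Lemma \ref{local lemma} is available the argument is essentially routine; the only mild subtlety — and the step I would be most careful about — is the self-referential estimate for $\norm{s_t}$, which must be closed by using $\norm{s_t}\to 0$ to absorb the $C_1\norm{s_t}^2$ inexactness term. One should also verify that all of the various "for sufficiently large $t$" thresholds (from Lemma \ref{local lemma}, from the bound $H_t \succeq \frac{\mu}{2}\mathbf I$, and from the absorption step) can be taken simultaneously, after which the stated inequality holds for all $t$ beyond a single common index.
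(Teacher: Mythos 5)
Your proposal is correct and follows essentially the same route as the paper: Lemma \ref{local lemma} to reduce to exact Newton steps with $\lambda_t=0$, the identical three-term decomposition of $\nabla P(x_{t+1})$ bounded via Lemma \ref{lem:2.3} and Lemma \ref{bound lemma}, and then closing the self-referential estimate for $\norm{s_t}$ by absorbing the $C_1\norm{s_t}^2$ term for large $t$. The only (cosmetic) difference is that you absorb at the level of $\norm{s_t}$ while the paper squares first and uses $(a+b)^2\le 2(a^2+b^2)$; both yield the same constant $\frac{16}{\mu^2}\bigl(\frac{H_{Lip}}{2}+C_1+C_2\bigr)$.
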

\begin{proof}
	By \eqref{first opt condition}, we have 
	\begin{equation}\label{local ineq1}
		\begin{split}
			\norm{\nabla P(x_{t+1})} &= \norm{\nabla P(x_{t+1}) - g_t - H_t s_t} \\
			&= \norm{\nabla P(x_{t+1}) - \nabla P(x_t) - \nabla^2 P(x_t) s_t + \nabla P(x_t) - g_t + (\nabla^2 P(x_t) - H_t) s_t} \\
			&\leq \norm{\nabla P(x_{t+1}) - \nabla P(x_t) - \nabla^2 P(x_t) s_t} + \norm{\nabla P(x_t) - g_t} + \norm{(\nabla^2 P(x_t) - H_t) s_t} \\
			&\leq  ( \frac{H_{Lip}}{2} + C_1 + C_2 ) \norm{s_t}^2,  
		\end{split}
	\end{equation}
	where the last inequality is by Lemma \ref{lem:2.3} and Lemma \ref{bound lemma}.
	
	As in the proof of Lemma \ref{local lemma}, we have $s_t = -H_t^{-1} g_t$ and $H_t \succeq \frac{\mu}{2} \mathbf{I}$ for $t\geq t_{\mathcal{A}}$. By Lemma \ref{bound lemma} and the fact that $(a + b)^2 \leq 2(a^2 + b^2)$, we have
	\begin{equation}
		\norm{s_t}^2 \leq \norm{H_t^{-1}}^2 \norm{g_t}^2 \leq \frac{4}{\mu^2} 
		(\norm{g_t - \nabla P(x_t)} + \norm{\nabla P(x_t)})^2 
		\leq \frac{8}{\mu^2} 
		(C_1^2 \norm{s_t}^4 + \norm{\nabla P(x_t)}^2).
	\end{equation}
	Since $\norm{s_t} \rightarrow 0$, then for sufficient large $t$, we have
	\begin{equation}\label{local ineq2}
		\frac{1}{2} \norm{s_t}^2 \leq (1 - \frac{8}{\mu^2} C_1^2 \norm{s_t}^2) \norm{s_t}^2 \leq \frac{8}{\mu^2}\norm{\nabla P(x_t)}^2.
	\end{equation}   
	Plugging \eqref{local ineq2} into \eqref{local ineq1}, we have
	\begin{equation}
		\norm{\nabla P(x_{t+1})} \leq \frac{16}{\mu^2} ( \frac{H_{Lip}}{2} + C_1 + C_2 ) \norm{\nabla P(x_t)}^2,
	\end{equation}
	which completes the proof.
\end{proof}

\section{Numerical Experiments}
In this section,  motivated by \cite{du2017gradient} and \cite{huang2022efficiently}, we compare our algorithms with the GDA algorithm \cite{GDA} and the MCN algorithm \cite{luocubic} for solving the following minimax optimization problem:
\begin{equation}\label{numerical problem1}
\min \limits_{x\in \mathbb{R}^n}\max \limits_{y\in \mathbb{R}^m}g(x) - \frac{1}{2} y^2.
\end{equation}
where
\begin{equation}
g(x) = \left\{
\begin{array}{ll}
g_{i,1}(x) & x_1, ..., x_{i-1}\in [2\tau, 6\tau],\ x_i\in [0,\tau],\ x_{i+1},..., x_n\in [0,\tau],\ 1\leq i\leq n-1,\\
g_{i,2}(x) &  x_1, ..., x_{i-1}\in [2\tau, 6\tau],\ x_i\in [\tau, 2\tau],\ x_{i+1},..., x_n\in [0,\tau],\ 1\leq i\leq n-1,\\
g_{n,1}(x) & x_1, ..., x_{n-1}\in [2\tau, 6\tau],\ x_n\in [0,\tau], \\
g_{n,2}(x) & x_1, ..., x_{n-1}\in [2\tau, 6\tau],\ x_n\in [\tau, 2\tau], \\
g_{n+1, 1}(x) & x_1, ..., x_{n}\in [2\tau, 6\tau],
\end{array}
\right.
\end{equation}
with
\begin{align}
g_{i,1}(x) &= \sum_{j=1}^{i-1}L(x_j - 4\tau)^2 -\gamma x_i^2 + \sum_{j=i+1}^{n}Lx_j^2 - (i-1)\nu,\ 1\leq i\leq n-1, \\
g_{i,2}(x) &= \sum_{j=1}^{i-1}L(x_j - 4\tau)^2 + h(x) + \sum_{j=i+2}^{n}Lx_j^2 - (i-1)\nu,\ 1\leq i \leq n-1, \\
g_{n,1}(x) &= \sum_{j=1}^{n-1}L(x_j - 4\tau)^2 -\gamma x_n^2 - (n-1)\nu, \\
g_{n,2}(x) &= \sum_{j=1}^{n-1}L(x_j - 4\tau)^2 + h(x) - (n-1)\nu, \\
g_{n+1, 1}(x) &= \sum_{j=1}^{n}L(x_j - 4\tau)^2 - n\nu,
\end{align}
and
\begin{align}
h(x) &= \left\{
\begin{array}{ll}
h_1(x_i) + h_2(x_i)x_{i+1}^2 & x_1, ..., x_{i-1}\in [2\tau, 6\tau],\ x_i\in [\tau, 2\tau],\ x_{i+1},..., x_n\in [0,\tau],\\ 
&1\leq i\leq n-1,\\
h_1(x_n) & x_1, ..., x_{n-1}\in [2\tau, 6\tau],\ x_n\in [\tau, 2\tau],\\
0 & \text{otherwise,}
\end{array}
\right. \\
% h(a, b) &= h_1(a) + h_2(a)b^2\\
h_1(x) &= -\gamma x^2 + \frac{(-14L + 10\gamma)(x-\tau)^3}{3\tau} + \frac{(5L - 3\gamma)(x-\tau)^4}{2\tau^2},\\
h_2(x) &= -\gamma - \frac{10(L+\gamma)(x-2\tau)^3}{\tau^3} - \frac{15(L+\gamma)(x-2\tau)^4}{\tau^4} - \frac{6(L+\gamma)(x-2\tau)^5}{\tau^5},
\end{align}
and
\[
L > 0,\ \gamma >0,\ \tau = e,\ \nu = -h_1(2\tau) + 4L\tau^2.
\]
Note that $g(x)$ is only defined on the following domain:
\begin{equation}
D_0 = \bigcup_{i=1}^{n+1}\left\{x\in \mathbb{R}^n: 6\tau \geq x_1,...,x_{i-1}\geq 2\tau, 2\tau\geq x_i\geq 0, \tau\geq x_{i+1},...,x_n\geq 0\right\}.
\end{equation}
By Lemma A.3 in \cite{du2017gradient}, we know that there is only one local optimum, i.e., $(4\tau,...,4\tau)^{\top}$ and $d$ stationary points, i.e.,
\[
(0,...,0)^{\top}\ ,(4\tau, 0,..., 0)^{\top}\ ,..., (4\tau, ..., 4\tau,0)^{\top}.
\]
\begin{figure}[t]
	\centering 
	\includegraphics[scale=0.25]{./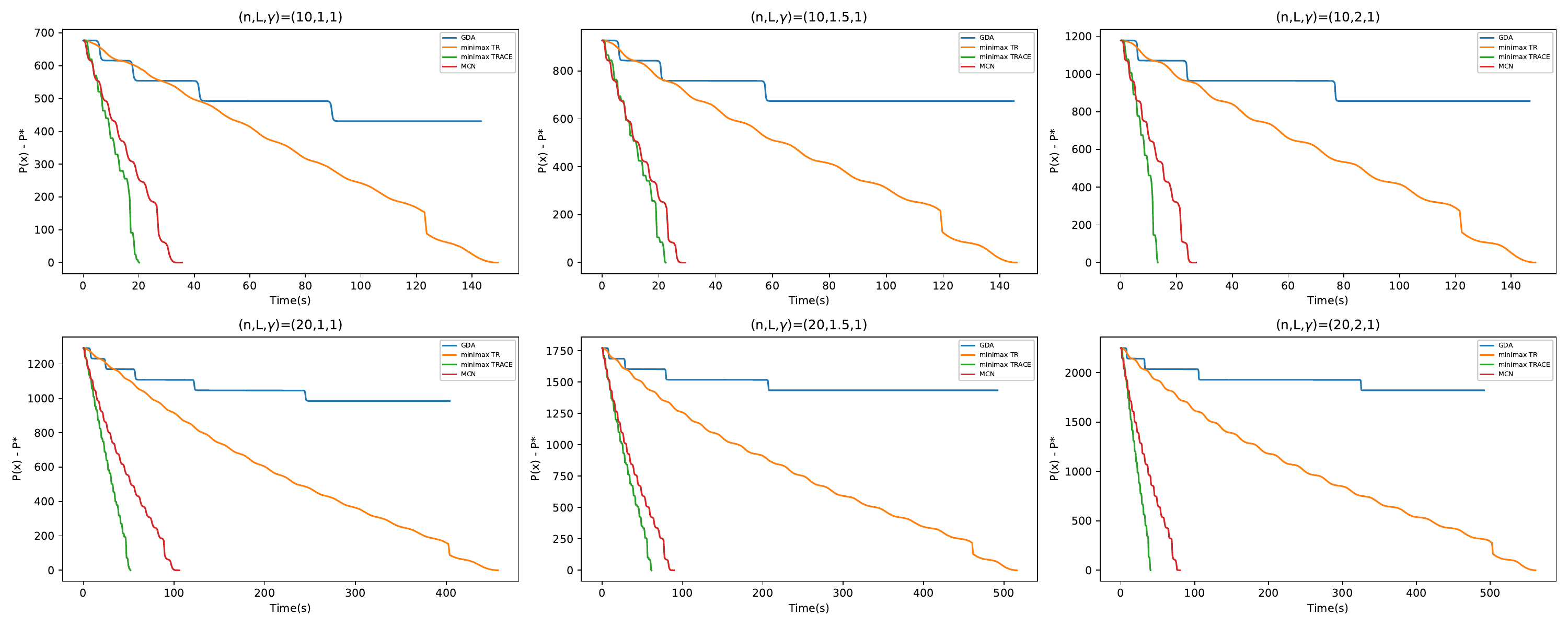}	
	\caption{Numerical results of four tested algorithms for solving \eqref{numerical problem1} with different choices of $n$, $L$ and $\gamma$.}
	\label{problem1 fig}
\end{figure}
We set the dimension of variable $y$ to be $5$ and set $x_0$ to be $(10^{-3},...,10^{-3})^{\top}$ which is close to the stationary point $(0,...,0)^{\top}$. $y_0$ is a random vector. We compare the running time against $P(x)-P^*$ for the four tested algorithms. In Figure \ref{problem1 fig}, the horizontal segment indicates that GDA gets stuck around some stationary points which are not local optimum, while all the second-order algorithms can escape them. MINIMAX-TR converges slowly since the trust region radius is set to be a small value. MINIMAX-TRACE outperforms the state-of-the-art MCN algorithm.

\section{Conclusions}
In this paper, we propose a Minimax Trust Region (MINIMAX-TR) algorithm and a Minimax Trust Region algorithm with Contractions and Expansions (MINIMAX-TRACE) for solving nonconvex-strongly concave minimax problems. Both algorithms can find 
an $(\eps, \sqrt{\eps})$-SSP within $\mathcal{O}(\eps^{-1.5})$ number of iterations. Moreover, MINIMAX-TRACE has locally quadratic convergence.

\end{document}